\documentclass{article}
\usepackage{fullpage}
\usepackage{graphics}
 \usepackage{graphicx}	
\usepackage{amssymb}
 \usepackage{amsmath}
 \usepackage{amsthm} 
 \usepackage{lmodern} 
 \usepackage{hyperref}
\usepackage{pdfsync}
\usepackage{yhmath}
\usepackage{amssymb,mathtools}
\usepackage{stmaryrd}
\usepackage{palatino,tipa,graphicx}

\usepackage{soul}
  \usepackage{xcolor,lmodern}

  \usepackage{calc}
\newsavebox\CBox
\newcommand\hcancel[2][0.5pt]{%
  \ifmmode\sbox\CBox{$#2$}\else\sbox\CBox{#2}\fi%
  \makebox[0pt][l]{\usebox\CBox}%
  \rule[0.5\ht\CBox-#1/2]{\wd\CBox}{#1}}

\newcommand{\tcr}[1]{\textcolor{red}{#1}}

 \newcommand{\dt}{\delta} 
 \renewcommand{\th}{\theta}
 \newcommand{\pd}{\partial} 
 \renewcommand{\H}{\mathcal{H}}

 \newcommand{\ggm}{\Gamma}
 \newcommand{\R}{{\mathbb{R}}}

\newcommand{\fal}{\forall}

\renewcommand{\theequation}{\arabic{section}.\arabic{equation}}

\newcommand{\RR}{{\R^2}}
\newcommand{\EE}{\mathcal{G}}
\newcommand{\Om}{\Omega}
\newcommand{\TT}{{\mathbb{T}^2}}

\newcommand{\eps}{\epsilon}
\newcommand{\NN}{{\mathbb{N}}}
\newcommand{\Peri}{\text{Per}}
\newcommand{\nnn}{\nonumber}

\newcommand{\BVE}{[BV(\TT; \{0,\eta^{-2}\})]^2}

 \newtheorem{theorem}{Theorem}[section]
\newtheorem{lemma}[theorem]{Lemma}

\newtheorem{remark}[theorem]{Remark}

\DeclareMathOperator{\dist}{dist}
\DeclareMathOperator{\diam}{diam}
	
\newcommand{\vep}{\varepsilon} 

\usepackage{stackengine}
\stackMath
\usepackage{mathtools}
\usepackage{esint}


\title{Periodic Minimizers of a \\ Ternary Non-Local Isoperimetric Problem} 

\author{Stanley Alama
\thanks{Department of Mathematics and Statistics, McMaster University. E-mail: alama@mcmaster.ca} 
%
 \qquad Lia Bronsard 
 \thanks{Department of Mathematics and Statistics, McMaster University. E-mail: bronsard@mcmaster.ca} 
 \qquad Xinyang Lu
 \thanks{Department of Mathematical Sciences, Lakehead University AND Department of Mathematics and Statistics, McGill University. Email: xlu8@lakeheadu.ca}
 \qquad Chong Wang
 \thanks{Department of Mathematics and Statistics, McMaster University. Email: wangc196@mcmaster.ca}
}

\begin{document}

\date{}
\maketitle

\begin{abstract}

We study a two-dimensional ternary inhibitory system derived as a sharp-interface limit of the Nakazawa-Ohta density functional theory of triblock copolymers. This free energy functional combines
an interface energy favoring micro-domain growth with a 
Coulomb-type long range interaction energy which prevents micro-domains from unlimited spreading.  Here we consider a limit in which two species are vanishingly small, but interactions are correspondingly large to maintain a nontrivial limit. In this limit two energy levels are distinguished:  the highest order limit encodes information on the geometry of local structures as a two-component isoperimetric problem,
while the second level describes the spatial distribution of components in global minimizers.  We provide a sharp rigorous derivation of the asymptotic limit, both for minimizers and in the context of Gamma-convergence.  Geometrical descriptions of limit configurations are derived; among other results, we will show 
that, quite unexpectedly, coexistence of single and double bubbles can arise.
The main difficulties are hidden in the
optimal solution of two-component isoperimetric problem: compared to binary systems, not only it lacks an explicit formula, but, more crucially, it can be neither concave nor convex on parts of its domain.

\end{abstract} 

 \numberwithin{equation}{section}

\section{Introduction}

An $ABC$ triblock copolymer is a linear-chain molecule consisting of three subchains, joined covalently to each other. A subchain of type $A$ monomer is connected to one of type $B$, which in turn is connected to another subchain of type $C$ monomer. Because of the repulsive forces between different types of monomers, different types of subchain tend to segregate. However, since subchains are chemically bonded in molecules, segregation can lead to a phase separation only at microscopic level, 
where $A, B$ and $C$-rich micro-domains emerge, forming morphological phases, many of which have been observed experimentally: see Figure \ref{laAndJanus}. Bonding of distinct monomer subchains provides an inhibition mechanism in block copolymers.

\begin{figure}[!htb]
\centering
 \includegraphics[width=3.8cm]{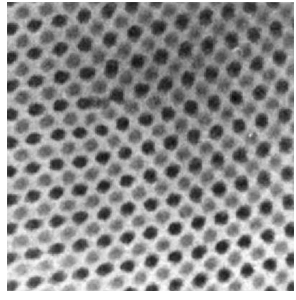} 
 \caption{  Electron microscopy image of the cross-section of a multi-cylinder morphology of ABC triblock copolymers with two cylinder types packed in a tetragonal lattice \cite{Mogi}.
 Reproduced with permission from the American Chemical Society 1994.
 }
 \label{laAndJanus}
\end{figure}

This paper will address the asymptotic behavior of the energy functional derived from  Nakazawa and Ohta's density functional theory for triblock copolymers \cite{microphase, lameRW} in two dimensions.
Let $u = (u_1, u_2)$, and $u_0 = 1 - u_1 - u_2 $. 
The order parameters $u_i, i = 0, 1, 2, $ are defined on $\mathbb{T}^2 = \mathbb{R}^2 / \mathbb{Z}^2=[ - \frac{1}{2}, \frac{1}{2} ]^2$ i.e., the two dimensional flat torus
of unit volume, with periodic boundary conditions. 
Define
\begin{eqnarray} \label{energyu}
\mathcal{E} (u) :=  \frac{1}{2} \sum_{i=0}^2 \int_{\mathbb{T}^2} |\nabla u_i | +  \sum_{i,j = 1}^2  \frac{\gamma_{ij}}{2} \int_{\mathbb{T}^2} \int_{\mathbb{T}^2} G_{\mathbb{T}^2}(x-y)\; u_i (x) \; u_j (y) dx dy
\end{eqnarray}
on $BV(\mathbb{T}^2; \{0,1\})$.
Each $u_i$, which represents the relative monomer density, has two preferred states: $u_i = 0$ and $u_i = 1$. 
Case $u_1 = 1$ corresponds to a pure-$A$ region, $u_2 = 1$ to a pure-$B$ region, and $u_0 = 1$ to a pure-$C$ region.  Thus, each $u_i=\chi_{\Om_i}$ with supports $\Om_i, \ i=0,1,2,$ which partition $\TT$:  $\Om_i$ are assumed to be mutually disjoint
and  $u_1+u_2 + u_0=1$ a.e. on $\TT$.

The energy is minimized under two mass or area constraints
\begin{eqnarray} \label{constrain}
 \frac{1}{| \mathbb{T}^2 |}   \int_{\mathbb{T}^2}  u_i  = M_i,  i = 1, 2.
\end{eqnarray}
Here $M_1$ and $M_2$ are the area fractions of type-$A$ and type-$B$ regions, respectively. Constraints \eqref{constrain} model the fact
 that, during an experiment, the compositions of the molecules do not change.

The first term in  \eqref{energyu} counts the perimeter of the interfaces:  indeed, for $u_i\in BV(\TT; \{0,1\})$,
\[ \int_{\mathbb{T}^2} | \nabla u_i |  : = \sup \left \{  \int_{\mathbb{T}^2}  u_i \  \text{div}  \varphi \ dx:  \varphi = (\varphi_1, \varphi_2)\in C^1 (\mathbb{T}^2 ; \mathbb{R}^2), |\varphi(x)| \leq 1 \right \}, \nonumber
\]
 defines the total variation of  the characteristic function $u_i$.  The factor $\frac12$ acknowledges that each interface between the phases is counted twice in the sum.

The second part of \eqref{energyu} is the long range interaction energy, associated with the connectivity of sub-chains in the triblock copolymer macromolecule.
The long range interaction coefficients $\gamma_{ij}$ form a symmetric matrix $\gamma = [\gamma_{ij}]\in \mathbb{R}^{2\times 2}$. 
Here $G_{\mathbb{T}^2}$ is the zero-mean Green's function for $- \triangle$ on $\mathbb{T}^2$ with periodic boundary conditions, satisfying 
\begin{equation}  \label{GLap}
 -\Delta G_{\mathbb{T}^2}(\cdot - y) = \delta(\cdot-y)- 1
 \text{ in } \mathbb{T}^2; 
  \int_{\mathbb{T}^2} {G_{\mathbb{T}^2} (x - y)} dx=0
 \end{equation}
for each $  y\in \mathbb{T}^2$. In two dimensions, the Green's function $G_{\mathbb{T}^2}$ has the local representation 
\begin{equation} \label{G2def}
G_{\mathbb{T}^2}(x - y)= - \frac{1}{2\pi}\log | x-y | + R_{\mathbb{T}^2} (x - y),
\end{equation}
for $|x-y|<\frac12$.
Here $R_{\mathbb{T}^2}\in C^\infty(\mathbb{T}^2)$ is the regular part of the Green's function.

\begin{figure}[!htb]
\centering
 \includegraphics[width=5.2cm]{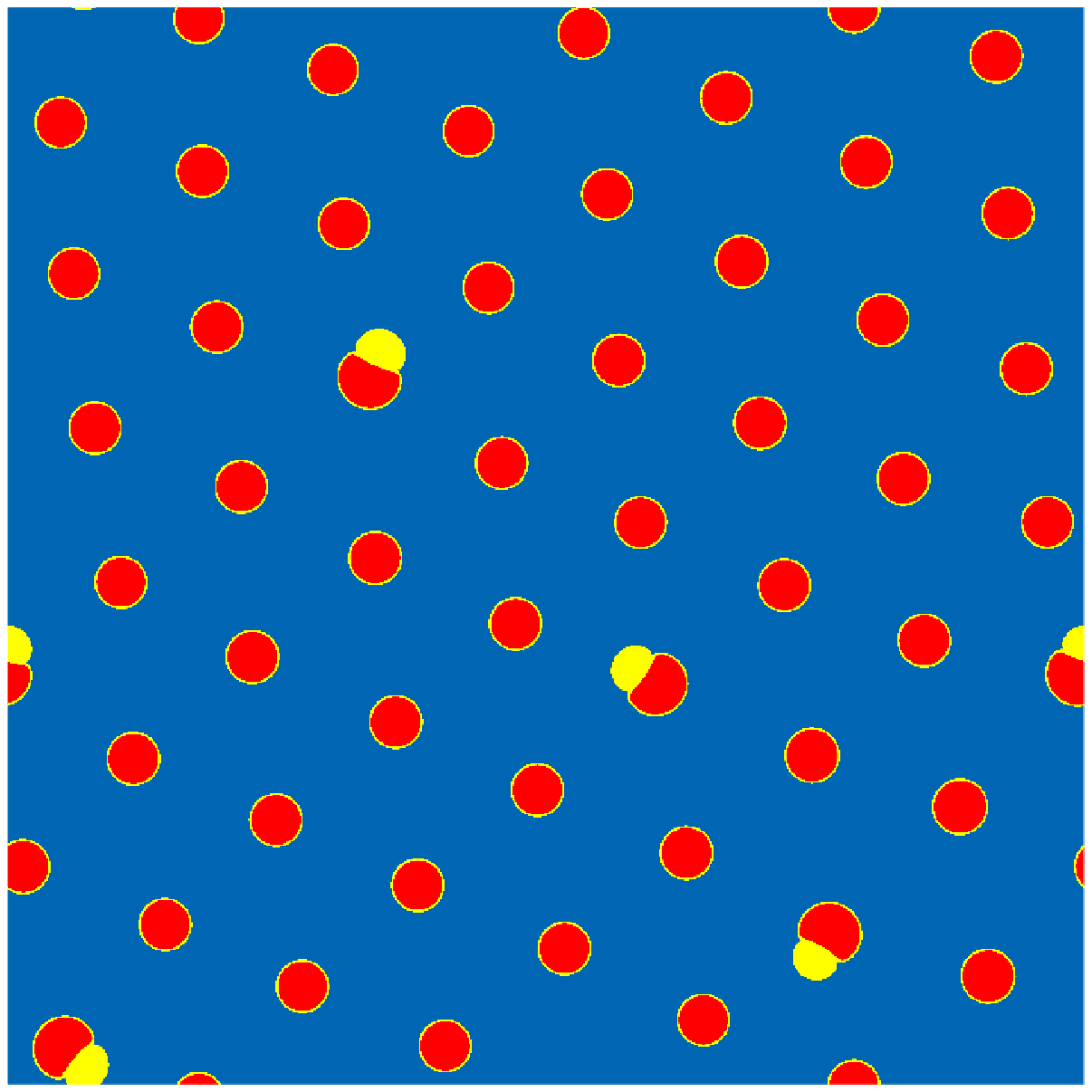} 
\includegraphics[width=5.2cm]{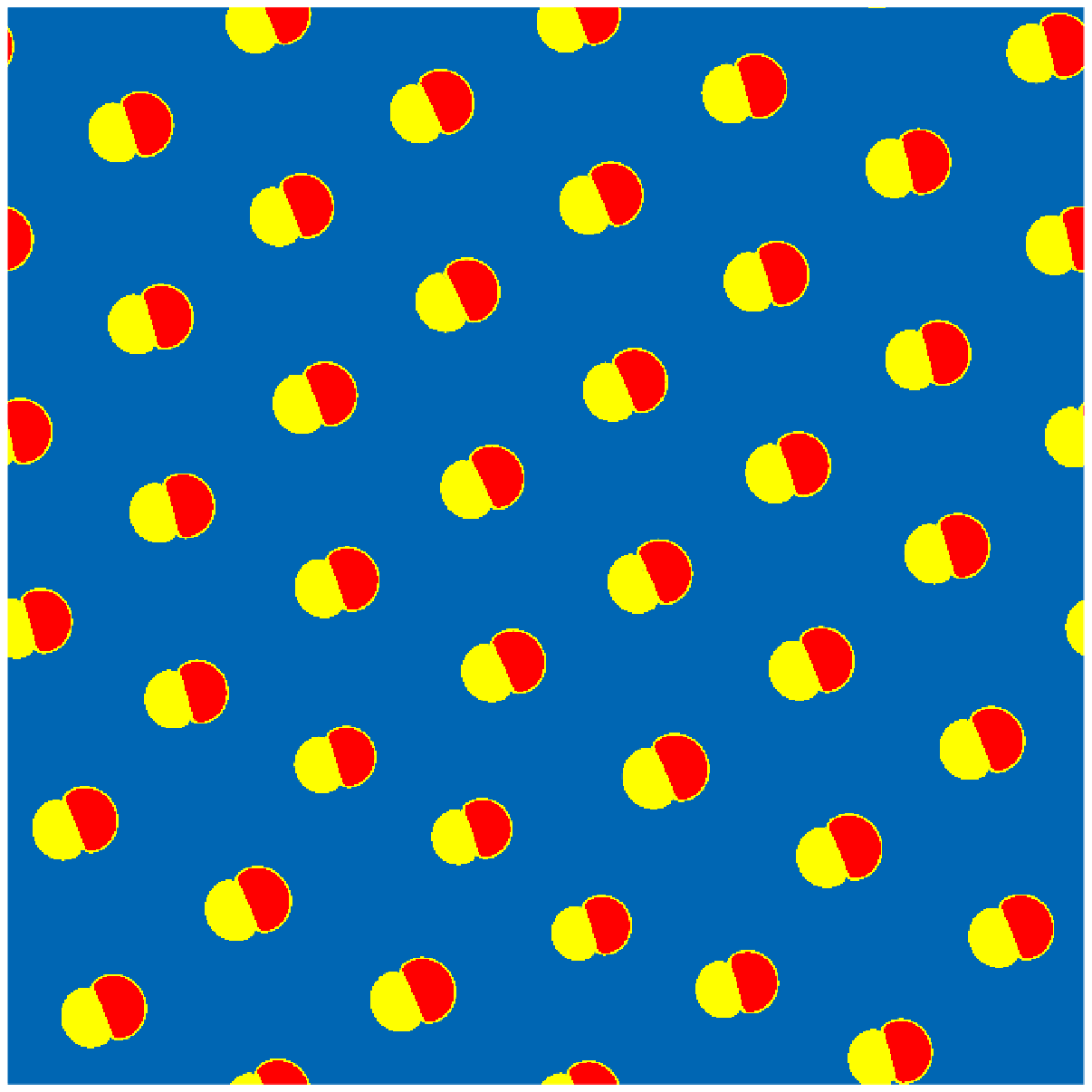} 
\includegraphics[width=5.2cm]{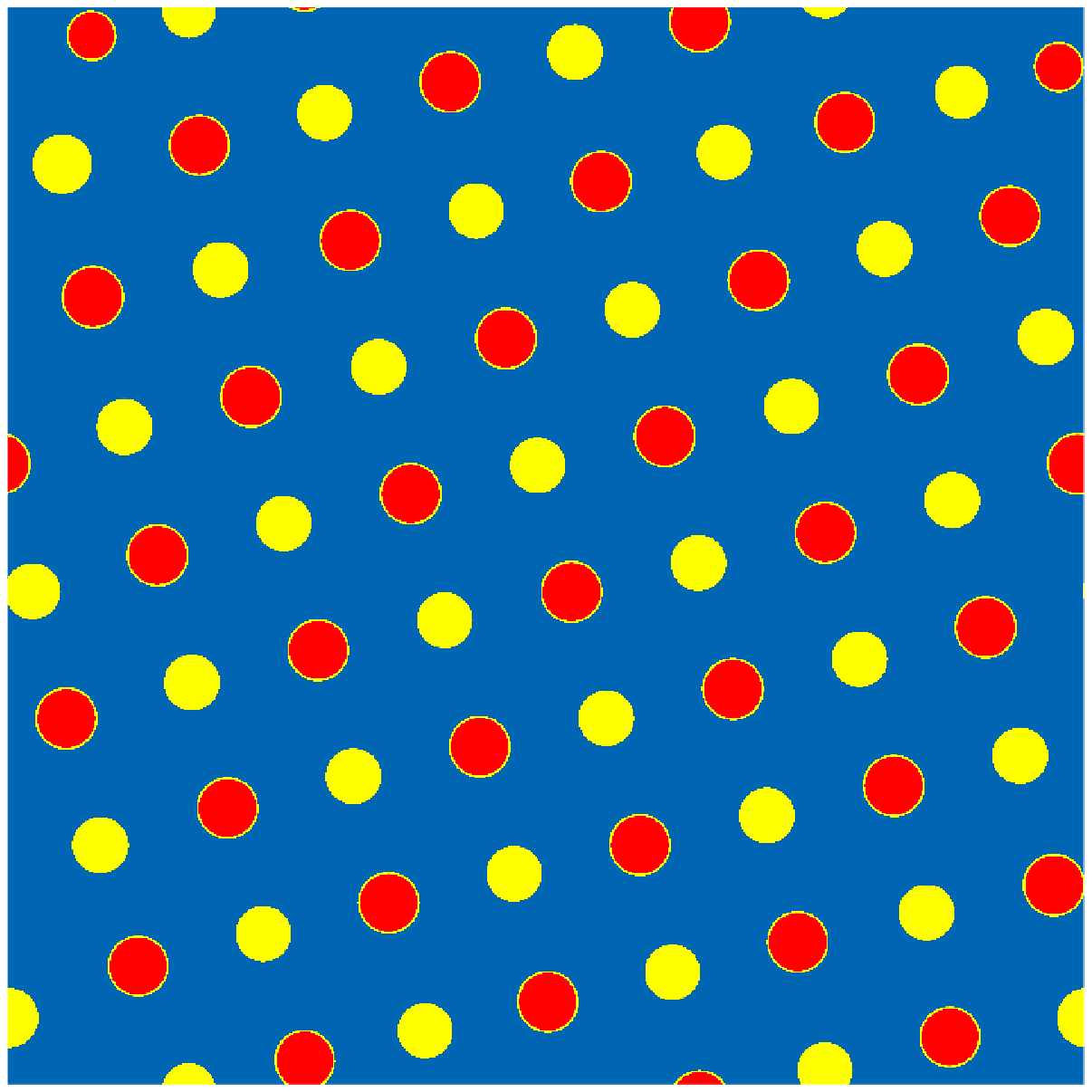}  
 \caption{Numerical simulations: coexistence, all double bubble, and all single bubble patterns of $ABC$ triblock copolymers. Type $A$ micro-domains are in red, and type $B$ are in yellow. The rest of the region is filled by type $C$ monomers, in blue.}
\label{terDisk1}
\end{figure}

As was the case for the Ohta-Kawasaki model of diblock copolymers, nonlocal ternary systems are of high mathematical interest because of the diverse patterns which are expected to be observed by its minimizers. 
 In the same way that the binary nonlocal isoperimetric functional is obtained as a sharp-interface limit of Ohta-Kawasaki, the triblock energy \eqref{energyu} is the sharp-interface limit (in the sense of $\Gamma$-convergence) of a ternary phase-field model introduced by Nakazawa-Ohta (see \cite{rwtri}),
\begin{eqnarray} \label{energyu2}
\mathcal{E}^{\epsilon} (u) :=  \frac{1}{2} \sum_{i=0}^2  \left [ \epsilon \int_{\mathbb{T}^2} |\nabla u_i |^2 dx +\frac{1}{\epsilon} \int_{\mathbb{T}^2} u_i^2 (1- u_i^2) dx \right] +
  \sum_{i,j = 1}^2  \frac{\gamma_{ij}}{2} \int_{\mathbb{T}^2} \int_{\mathbb{T}^2} G_{\mathbb{T}^2}(x-y)\; u_i (x) \; u_j (y) dx dy
\end{eqnarray}
defined on $H^1(\mathbb{T}^2) $.

Just as the diblock copolymer problem may be formulated as a nonlocal isoperimetric problem (NLIP) which partitions space into two components, the triblock model is a  NLIP  based on partitions into three disjoint components. 
 Physicists (see e.g. Bates-Fredrickson \cite{block}) have predicted a wide variety of both two dimensional and three dimensional patterns. 
The shape of minimizers is generally believed to come from perimeter minimization, while the nonlocal interactions promote fragmentation.  For partitions of $\mathbb{R}^n$, $n=2,3$, into three components, the minimizers of perimeter are known to be {\it double bubbles} \cite{FABHZ}, \cite{db1}, \cite{db2}.  In addition, there are {\it core shell} configurations (annuli in two or three dimensions) which are non-minimizing critical points of perimeter.  The presence of these additional structures add to the complexity of the energy landscape of the triblock functional.  Figure~\ref{terDisk1} presents numerical simulations of three two-dimensional morphologies.  These are obtained as the $L^2$ gradient flow dynamics of \eqref{energyu2}, which is solved by a semi-implicit Fourier spectral method \cite{wrz}.

A recent series of papers by Ren \& Wei \cite{doubleAs} and Ren \& Wang \cite{disc, stationary} consider the triblock energy in a parameter regime where two of the components are very dilute with respect to the third.  They use perturbative arguments to generate stationary configurations consisting of assemblies of double bubbles, core shells, or single bubbles of both species, in an array.  These solutions are not constructed by minimization, and it is unknown if they are local or global minimizers.  The purpose of this article is to consider global minimizers of the triblock energy in 2D, in an asymptotic regime where two minority phases have vanishingly small area but strong interaction ensures a bounded number of phase domains in the limit.  In particular, we are interested in describing the possible morphologies of minimizers in this dilute limit.

The appropriate ``droplet'' scaling we use was introduced by Choksi \& Peletier \cite{bi1} in the diblock case.  We introduce a new parameter $\eta$ which is to represent the characteristic length scale of the droplet components.  Thus, areas scale as $\eta^2$, and so we choose mass constraints on $u=(u_1,u_2)$, 
\[   \int_{\mathbb{T}^2}  u_i = \eta^2 M_i \]
for some fixed $M_i$, $i=1, 2$.  We then rescale $u_i$ as
\begin{eqnarray}
v_{i, \eta}^{} = \frac{ u_i }{\eta^2}, \quad i=0,1,2, \quad\text{with}\quad { \int}_{\mathbb{T}^2}  v_{i, \eta}  =  M_i, \quad i=1,2.
\end{eqnarray}
The matrix $\gamma=[\gamma_{ij}]$ is also scaled, in such a way that both terms contribute at the same order in $\eta$.  This may be accomplished by choosing
\begin{eqnarray}
\gamma_{ij} = \frac{1}{|\log \eta| \eta^3} \Gamma_{ij},   \nonumber
\end{eqnarray}
with fixed constants $\Gamma_{ij}\ge 0$.  Throughout the paper we will assume
$$  \Gamma_{ii}>0 \quad i=1,2, \qquad \Gamma_{12}\ge 0, \qquad\text{and}\qquad  \Gamma_{11}\Gamma_{22}-\Gamma_{12}^2>0 . $$
The hypotheses $\Gamma_{11},\Gamma_{22}>0$ and $\Gamma_{12}\ge 0$ are essential to our results.  The positivity of the matrix $\Gamma$ is a consequence of the derivation of the model from density functional theory \cite{rwtri}, but can be omitted in most of our results.  However, the nature of minimizers would be quite different if the matrix $\Gamma$ were not positive definite. 
Denote by $v_{\eta} = (v_{1,\eta}, v_{2, \eta})$.  As the supports of the component funtions $v_{i, \eta}$ should be of finite perimeter and disjoint, we will assume $v_\eta$ lies in the space
\begin{equation}\label{Xspace}
  X_\eta:=\left\{ (v_{1,\eta}, v_{2, \eta}) \ | \  \eta^2 v_{i,\eta}\in BV(\TT; \{0,1\}), \  v_{1,\eta}\, v_{2, \eta} = 0 \ \  a.e.\right\}. 
\end{equation}
With these definitions and for $v_\eta\in X_\eta$ we define our functional,
\begin{eqnarray} \label{Eeta}
  E_{\eta}^{} (v_{\eta}) := \frac{1}{\eta} \mathcal{E} ( u ) 
  =
 \frac{\eta}{2} \sum_{i=0}^2 \int_{\mathbb{T}^2} |\nabla v_{i,\eta} | +  \sum_{i,j = 1}^2  \  \frac{  \Gamma_{ij} }{2 |\log \eta| } \int_{\mathbb{T}^2} \int_{\mathbb{T}^2} G_{\mathbb{T}^2}(x - y)  v_{i, \eta}(x) v_{j, \eta}(y) dx dy, \end{eqnarray}
and $E_\eta(v_\eta)=+\infty$ otherwise.

\medskip

Heuristically, we expect that (for large enough $M_i>0$) this choice of parameters will lead to fragmentation of a minimizing sequence $v_\eta = \sum_{k=1}^K v_\eta^k$ into $K$ isolated components, each concentrating at a distinct point $\xi^k\in\TT$ and supported on a pair of sets $(\Om_{1,\eta}^k,\Om_{2,\eta}^k)$ with characteristic length scale $O(\eta)$.  Apart from the vectorial nature of the order parameters, this was the result described in \cite{bi1,ABCT2} in the binary case.  Blowing up at $\eta$-scale, we would express the minimizing components 
$v_{i, \eta}^k(\eta x + \xi^k ) = \eta^{-2} z^k_i (x)$, with limiting profile $z_i^k:=\chi_{A_i^k}$ for pairs of sets $A^k=(A^k_1,A^k_2)$ in $\RR$.  With this as an ansatz, the minimizer $v_\eta$ may be treated as a superposition of point particles,
$$  v_\eta \rightharpoonup \sum_{k=1}^K (m^k_1,m^k_2)\, \delta_{\xi^k}, $$
for $m_i^k=|A_i^k|$, and
a formal calculation yields an expansion of the energy of the form:
\begin{align*}
E_\eta(v_\eta)&=\sum_{k=1}^K \sum_{i=0}^2 \frac{\eta}{2} \int_{\TT}|\nabla v_{i, \eta}^k |
  +{\Gamma_{ij} \over 2 |\log \eta|} \sum_{k,\ell=1}^K \sum_{i,j=1}^2
     \int_{\TT}\int_{\TT} v_{i, \eta}^k (x)\, G_{\mathbb{T}^2} (x-y)\, v_{j, \eta}^\ell(y)\, dx\, dy \\
     & =  \sum_{k=1}^K \sum_{i=0}^2 \frac{1}{2}  \int_{A_i^k}  |\nabla z_{i}^k| 
  +{\Gamma_{ij} \over 2|\log \eta|} \sum_{k,\ell=1}^K \sum_{i,j=1}^2
     \int_{ A_i^k}\int_{ A_j^\ell} G_{\mathbb{T}^2} (\xi^k+ \eta \tilde x - \xi^\ell - \eta \tilde y)\, d \tilde x\, d \tilde y \\
     &= \sum_{k=1}^K \left( \text{Per}_{\RR} (A^k) + \sum_{i,j=1}^2 {\Gamma_{ij}\over 4\pi} |A^k_i|\, |A^k_j|\right) + O(|\log\eta|^{-1}),
\end{align*}
where we define the perimeter of the {2-cluster} (see \cite[Chapter 29] {maggi}) $A^{\color{red} }=(A_1^{ {\color{red} } },A_2^{{ \color{red} }})$ of sets $A_1,A_2\subset\RR$ with $|A_1\cap A_2|=0$ as
 \begin{equation}\label{cluster_per}
\text{Per}_F(A) = \frac{1}{2} \sum_{i=0}^2 {\cal H}^1 (A_i \cap F), \text{ where } A_0 = (A_1 \cup A_2)^C.  
\end{equation}
Thus, to highest order, energy minimization define the {\it shape} of minimizing components $A^k$ at scale $\eta$, as minimizers of an isoperimetric problem for clusters in $\RR$.  We define 
$$  \EE (A) : = \text{Per}_{\mathbb{R}^2 } (A) +  \sum_{i,j=1}^2 \frac{\Gamma_{ij} {m}_i    {m}_j }{4 \pi},  $$
and for given $  m  =(  m_1 , m_2 )$, $ m_i \ge 0$, 
$$  e_0( m ):=\min   \left\{   \EE( A ) \ | \  A =(A_1,A_2) \text{ 2-cluster, with $| A_i  |= m_i $, $i=1,2$}\right \}.  $$
If both $ m_i >0$, $i=1,2$, then \cite{FABHZ} the minimum is attained at a {\it double bubble}, whose geometry is uniquely determined by $m$; that is,
\begin{eqnarray} \label{e0m}
e_0^{} (m)  = p(m_1, m_2) + \sum_{i, j=1}^2 \frac{\Gamma_{ij} m_i m_j }{4\pi}. 
\end{eqnarray}
The expression $p(m_1,m_2)=\text{Per}_{\RR}(A)$ gives the perimeter of the minimizing cluster $A=(A_1,A_2)$ with $m_i=|A_i|$, and represents the total perimeter of a double bubble when $m_1,m_2>0$. 
In the case of $m_1 = 0$ (or $m_2 =0$), the minimizer is a single bubble,
 $p(m_1,0)=2\sqrt{\pi m_1}$ (and similarly for $p(0,m_2)$), and so single bubbles simplify to
\begin{eqnarray} \label{e0s}
\text{ } e_0(m) : = e_0(m_1, 0) =  2\sqrt{\pi m_1}  +  \frac{\Gamma_{11} (m_1)^2 }{4\pi},  \ \text{ or } \, \  e_0(m) := e_0(0, m_2) =  2\sqrt{\pi m_2}  +  \frac{\Gamma_{22} (m_2)^2 }{4\pi}.
\end{eqnarray} 
%
%
Thus, we expect that minimizers of $E_\eta$ will always form an array consisting of single or double bubbles (or both);  no other shapes are expected for the components of $\Om_\eta$.  The spatial distribution of the single or double bubbles on $\TT$ should be determined by the higher order terms in a more detailed energy expansion.

However, this heuristic description says nothing about how the total masses $M=(M_1,M_2)$ (at scale $\eta^2$) are to be divided.  Indeed, if either $ M_i$ is large then it may well happen that total energy is reduced by further splitting into smaller components, so as to decrease the quadratic term in $e_0$.  Following \cite{bi1} we define
  \begin{eqnarray}  \label{mine0}
 \overline{e_0 }(M ) := \inf \left\{ \sum_{k=1}^{\infty} e_0 (m^k ) :  m^k = (m_1^k, m_2^k ),  \ m_i^k \geq 0,\ \sum_{k=1}^{\infty}  m_i^k = M_i, i = 1, 2 \right\},
 \label{e0bar}
 \end{eqnarray}
which effectively allows for splitting of sets with large area.  We remark that the problem \eqref{e0bar} is highly non-convex, and we do not expect uniqueness of minimizers for $\overline{e_0 }(M)$.

\medskip

Our main results confirm the heuristic behavior above, and provide some description of the geometry of the limiting component clusters for minimizers.  In Theorem~\ref{MinThm} we prove that $\overline{e_0}(M)$ indeed determines the distribution of masses and the resulting shapes of the components:
$$   \lim_{\eta\to 0}\min\left\{ E_\eta(v_{\eta}) \ | \ v_{\eta} \in X_\eta, \  \int_\TT v_{\eta} = M\right\} = \overline{e_0}(M).  $$
For large enough masses $M=(M_1,M_2)$ minimizers do split into a finite number $K$ of disjoint components, each of which minimizes $\EE(A)$ upon blow-up at scale $\eta$.  Furthermore, the spatial arrangement of the limiting bubbles is determined by minimization of the interaction energy
$$  \mathcal{F}_K (y^1,\dots,y^K ; \{m^1,\dots,m^K\})= \sum_{k,\ell=1\atop k\neq\ell}^K \sum_{i,j=1}^2  \frac{ \Gamma_{ij} }{2}\, m_i^k\, m_j^\ell\, G_{\TT}(y^k-y^\ell).  $$
Thus, global minimization should indeed produce a crystalline lattice of double and/or single bubbles, as in the stationary assemblies constructed in \cite{doubleAs,disc,stationary}.
Theorem~\ref{MinThm}  provides a more precise statement which gives a fine detailed structure of minimizers $v_\eta$ of $E_\eta$.  In the same section we also show that $E_\eta$ and $\overline{e_0}$ are connected via $\Gamma$-convergence; see Theorem~\ref{twodfirst}, and the interaction energy $\mathcal{F}_K$ arises as a second-level $\Gamma$ limit.  These results both sharpen those for the binary (diblock) case \cite{bi1} and generalize to the more complex triblock model.

The most important and original results concern the minimizers of $\overline{e_0}(M)$.
First, minimizing configurations can contain only a finite number of nontrivial components:

\begin{theorem}(Finiteness)\label{finiteness}
For any $M= (M_1, M_2), \ M_1, M_2 > 0$, a minimizing configuration for $\overline{e_0}(M)$ has finitely many nontrivial components.  That is, there exist $ K<\infty$ and pairs $m^1,\dots, m^K$, with $m^k=(m_1^k,m_2^k)\neq (0,0)$, for which $\overline{e_0}(M)=\sum_{k=1}^K e_0(m^k)$.
\end{theorem}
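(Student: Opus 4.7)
I argue by contradiction. Suppose $\{m^k\}_{k\ge 1}$ is a minimizer of $\overline{e_0}(M)$ with infinitely many nontrivial components. Since $\sum_k (m_1^k+m_2^k)=M_1+M_2<\infty$ (writing $|m|:=m_1+m_2$), the masses satisfy $|m^k|\to 0$. The plan is to show that for sufficiently small components the merging operation, which replaces two components $m^{k_1},m^{k_2}$ by their sum $m^{k_1}+m^{k_2}$, strictly lowers $\sum_k e_0(m^k)$, contradicting minimality. The resulting bound on $K$ then follows from mass conservation: $K\le (M_1+M_2)/\epsilon_0$, where $\epsilon_0$ is the threshold below which merging is favorable.

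For nontrivial $m,m'$, a direct expansion (using $\Gamma_{ij}=\Gamma_{ji}$) yields
\begin{equation*}
e_0(m)+e_0(m')-e_0(m+m')=\bigl[p(m)+p(m')-p(m+m')\bigr]-\sum_{i,j=1}^2 \frac{\Gamma_{ij}}{2\pi}m_i m'_j.
\end{equation*}
The quadratic cost from merging is bounded trivially by $C_\Gamma|m||m'|\le C_\Gamma(M_1+M_2)\min(|m|,|m'|)$. The crucial step is a quantitative strict subadditivity estimate for the 2-cluster perimeter of the form
\begin{equation*}
p(m)+p(m')-p(m+m')\ge c\,\sqrt{\min(|m|,|m'|)}
\end{equation*}
for some $c>0$, valid at least when $m,m'$ share a common dominant type. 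Once this is established, the net energy change is strictly negative whenever $\min(|m|,|m'|)<\epsilon_0=\epsilon_0(M,\Gamma)$, because the $\sqrt{\min}$ gain dominates the linear bound on the quadratic loss as $\min\to 0$.

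The lower bound on the perimeter gain combines the isoperimetric inequality $p(m)\ge 2\sqrt{\pi|m|}$ with an upper bound on $p(m+m')$ obtained by an explicit geometric construction: starting from an optimizer of $p(m')$, inflate an existing region of the appropriate type to absorb mass from $m$ (cost $O(|m|)$ by standard perimeter estimates for small area increments), and attach any complementary-type mass as a small lens-shaped bubble along the cluster boundary via the $120^{\circ}$ triple-point rule, which is strictly cheaper than the detached disk contribution $2\sqrt{\pi|m|}$ by a definite fraction. The main obstacle is ensuring the quantitative saving is uniform in the geometry of $m,m'$: near the corners of the simplex of mass ratios the function $p$ is only $\tfrac{1}{2}$-Hölder continuous and the naive bounds degenerate. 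A pigeonhole argument sidesteps this: among infinitely many nontrivial $m^k$, infinitely many share a common dominant type (say $m_1^k\ge m_2^k$), and picking two such small components for merging keeps us well away from the problematic corners. Applying the merging inequality to two such components whose smaller mass is below $\epsilon_0$ then yields the contradiction and completes the proof.
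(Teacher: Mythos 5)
Your overall strategy is genuinely different from the paper's, and the energy bookkeeping is correct: merging $m,m'$ changes the energy by $[p(m)+p(m')-p(m+m')]-\sum_{i,j}\frac{\Gamma_{ij}}{2\pi}m_im'_j$, and the quadratic cross term is indeed $O(\min(|m|,|m'|))$. The gap is the pivotal inequality $p(m)+p(m')-p(m+m')\ge c\sqrt{\min(|m|,|m'|)}$: it is asserted, not proved, and the sketch you give does not yield a uniform constant in the regime where you actually apply it, namely when \emph{both} selected components are small and of comparable size (your contradiction picks two small components, so there is no large host). Your gain is only the isoperimetric bound $p(m)\ge 2\sqrt{\pi|m|}$, which is lossy precisely when $m_1\approx m_2$; your absorption cost ``$O(|m|)$'' presumes a host of fixed size, whereas with $|m'|\approx|m|$ the inflation cost is itself of order $\sqrt{|m|}$ and its constant must be compared honestly against $2\sqrt{\pi}$; and the lens estimate $\sqrt{2\pi m_2}$ needs the attached area to be small relative to the host, which again fails when $|m|\approx|m'|$. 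The pigeonhole does not ``keep you away from the problematic corners'': under common dominant type the partner $m'=(|m'|,0)$ is still admissible, and together with balanced $m$ (i.e.\ $m_1\approx m_2$, the center of the simplex, not a corner) this is exactly where the naive accounting is borderline --- with endpoint bounds on the derivative the estimated gain does not exceed the estimated cost. So, as written, the central lemma is unsupported.

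The inequality you need is nevertheless true in the common-dominant-type case and can be proved with ingredients already in the paper: by Lemma \ref{c' sharp}, $\partial p/\partial m_i=1/r_i$, and since $\theta_i\in(\pi/3,2\pi/3)$ one has $r_i\ge\sqrt{m_i/\pi}$, so integrating along the segment from $m'$ to $m'+m$ gives $p(m'+m)-p(m')\le 2\sqrt{\pi}\sum_{i=1,2}\bigl(\sqrt{m'_i+m_i}-\sqrt{m'_i}\bigr)$. With $m_1\ge m_2$, $m'_1\ge m'_2$ and $|m|\le|m'|$, minimizing over the admissible mass ratios shows $p(m)+p(m')-p(m+m')\ge 2\sqrt{\pi}\bigl(2-\sqrt{3/2}-\sqrt{1/2}\bigr)\sqrt{|m|}$, the worst case being $m_1=m_2$, $m'=(|m'|,0)$; the margin is thin but uniform, and with it your merging argument closes (the count should then read $K\le 2+(M_1+M_2)/\epsilon_0$, since one sub-threshold component of each dominant type may survive). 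Note that the paper proceeds differently: merging is used only for single bubbles (Lemma \ref{single_lower}, after Lemma \ref{max mass}), while double bubbles are controlled by a mass-transfer second-order optimality condition (Lemma \ref{all but one}) combined with concavity of $e_0$ in each $m_i$ for small masses (Lemma \ref{flex}), which requires the delicate asymptotics of $\partial(1/r_1)/\partial m_1$ as $m_1\to 0$. Your single merging mechanism, once the subadditivity estimate is actually established, is a legitimate and arguably more elementary alternative, since it uses only first derivatives of $p$; in its present form, however, the key estimate is a genuine gap. Like the paper, you also tacitly assume that a minimizing configuration exists.
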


This is proven in the binary case by \cite{bi1}, using the concavity of the perimeter for small masses.  However, in the ternary case the proof is much more complex as the expression for the perimeter of double bubbles is not explicitly known, and in fact it is unknown whether it is concave for small $m$.

Given the number of parameters appearing in the limiting description $\overline{e_0}(M)$, it is impossible to make a simple statement concerning its minimizers.  Numerical simulations suggest a wide variety of potential morphologies, and we prove the following indeed may be observed for appropriate parameter values.
\begin{theorem}\label{properties}
\begin{enumerate}  
\item[(a)] (Coexistence) 
Given  $K_1$ and $K_2 >0$, and $\ggm_{12}= 0$, there exist $\overline{M}_1$ and $\overline{M}_2$ such that for all $M_1>\overline{M}_1$ and $M_2>\overline{M}_2$ minimizing configurations of \eqref{mine0} have at least $K_1$ double bubbles and $K_2$ single bubbles.
 \item[(b)] (All single bubbles)  There exist constants $M_i^*$,  depending only on $\Gamma_{ii}$, $i=1,2$, such that
for any given $M_1>4 M_1^*$, $M_2> 4 M_2^*$, there exists a threshold $\ggm_{12}^{\ast}$ such that
  for all $\ggm_{12}> \ggm_{12}^{\ast}$, any minimizing configuration of {\eqref{mine0}}
has no double bubbles. Moreover, all single bubbles have the same size (see Lemma \ref{finiteSin}).
\item[(c)] (One double bubble) There exist constants $m_i^*$,  depending only on $\Gamma_{ii}$, $i=1,2$, such that for any given  $M_i< \min\{m_i^*, \pi \ggm_{ii}^{-2/3}\}$, $i=1,2$, and sufficiently small $\ggm_{12}>0$ such that
\[\frac{\ggm_{12}}{2\pi} M_1M_2 +p(M_1,M_2)<2\sqrt{\pi}(\sqrt{M_1}+\sqrt{M_2}),\]
then there is a unique minimizer of 
\eqref{mine0} made of one double bubble. Here $p$ denotes the perimeter (see equation \eqref{e0m} and below).
\end{enumerate}
The specific values of  $m_i^*$, $M_i^*$ and $\ggm_{12}^*$ are given in the proof of Theorem~\ref{properties} and in the lemmas derived in Section~\ref{section 2}. 
 \end{theorem}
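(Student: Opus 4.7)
The three parts of the theorem call for different but related comparison arguments, unified by the scaling $p(\lambda m_1, \lambda m_2) = \sqrt{\lambda}\,p(m_1,m_2)$ of the double-bubble perimeter together with the fact that $e_0(m,0)$ and $e_0(0,m)$ each have a unique ``optimal single-bubble'' mass determined solely by $\Gamma_{11}$ and $\Gamma_{22}$. By Theorem~\ref{finiteness} I may in every case restrict attention to finite partitions $M=\sum_{k=1}^K m^k$ and write $\overline{e_0}(M)=\sum_{k=1}^K e_0(m^k)$.

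For part~(c), I would argue that any partition into $K\ge 2$ non-trivial components strictly exceeds $e_0(M)$ once $M_i$ is small. The half-homogeneity of $p$ combined with the connectivity of a minimal double bubble yields strict subadditivity $\sum_k p(m^k)>p(M)$ for $K\ge 2$; this perimeter loss dominates the Coulomb gain $\frac{\Gamma_{ii}}{4\pi}(M_i^2-\sum_k (m_i^k)^2)$ precisely when $M_i<\pi\Gamma_{ii}^{-2/3}$. The separate threshold $m_i^*$ absorbs the off-diagonal $\Gamma_{12}$ contribution, which is subadditive and hence favors the single-component choice for small $\Gamma_{12}$. Among single-component configurations there are only three candidates, namely one double bubble and the two single-bubble pairs $(M_1,0)$, $(0,M_2)$; the explicit inequality assumed in (c) is exactly what rules out the two-single-bubble competitor, leaving the one double bubble as the unique minimizer.

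For part~(b), the main task is to show that no double bubble can occur in a minimizer when $\Gamma_{12}$ is sufficiently large. Suppose a minimizer contained a double-bubble component with masses $(m_1,m_2)$, $m_i>0$. Replacing it by two separated single bubbles $(m_1,0)$ and $(0,m_2)$ produces the energy change
\[
\Delta E \;=\; 2\sqrt{\pi m_1}+2\sqrt{\pi m_2}-p(m_1,m_2)-\frac{\Gamma_{12}\,m_1 m_2}{2\pi}.
\]
A priori bounds on any minimizer's individual masses — bounded above by the quadratic penalty and bounded below because overly small components waste perimeter, where the hypothesis $M_i>4M_i^*$ guarantees enough room to accommodate the redistributed mass in singles — make the first three terms uniformly bounded, so choosing $\Gamma_{12}^*$ above this bound forces $\Delta E<0$, a contradiction. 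Equal sizing of the remaining single bubbles is then invoked directly from Lemma~\ref{finiteSin}.

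Part~(a) requires the most delicate construction. Since $\Gamma_{12}=0$ decouples the type-$1$ and type-$2$ Coulomb interactions, I would first identify separately the per-mass energy densities of an optimal single bubble of each type and of an optimal (possibly asymmetric) double bubble, attained at characteristic masses depending only on $\Gamma_{11},\Gamma_{22}$. I would then build an explicit competitor consisting of $K_1$ double bubbles and $K_2$ single bubbles at these optimal scales, absorbing any leftover mass into further components of the same two types, and verify by direct comparison that this beats any configuration with fewer than $K_1$ doubles or fewer than $K_2$ singles once $M_1,M_2$ exceed thresholds $\overline{M}_i$ depending on $K_1,K_2,\Gamma_{11},\Gamma_{22}$. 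The chief obstacle throughout — flagged already in the abstract — is the absence of a closed-form expression for $p(m_1,m_2)$: every estimate must proceed through scaling, monotonicity, and qualitative convex-analytic properties of $p$ rather than direct computation.
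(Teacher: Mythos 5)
Your overall flavor (comparison and splitting arguments) is right, but each part has a concrete gap. In (c), your central quantitative claim is off in two ways. First, the cross term $\frac{\Gamma_{12}}{2\pi}m_1m_2$ is \emph{superadditive} under merging of masses, so it \emph{penalizes} the single double bubble rather than favoring it; this is precisely why the explicit inequality on $\Gamma_{12}$ must be assumed, and the threshold $m_i^*$ has nothing to do with ``absorbing'' $\Gamma_{12}$ — in the paper it comes from Lemma~\ref{flex}, i.e.\ from the blow-up $\partial^2 p/\partial m_i^2\sim -C\,m_i^{-3/2}$ as a lobe mass vanishes, and depends only on $\Gamma_{ii}$. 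Second, your proposed route ``strict subadditivity of $p$ dominates the Coulomb gain precisely when $M_i<\pi\Gamma_{ii}^{-2/3}$'' is a finite (non-infinitesimal) comparison that you neither prove nor can read off from known properties of the non-explicit $p$; the paper instead excludes two doubles, two same-type singles, and single/double coexistence by first- and second-order optimality (equality of first derivatives from Lemma~\ref{c' sharp}, plus concavity from Lemma~\ref{flex} and the concavity of $\Gamma_{ii}x^2/4\pi+2\sqrt{\pi x}$ on $(0,\pi\Gamma_{ii}^{-2/3}]$ as in Lemma~\ref{finiteSin}), and only then compares the two surviving candidates using the assumed inequality — that last step you do have.

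In (b), your final splitting estimate matches the paper, but the argument hinges on a \emph{lower} bound $\overline{m_i}^d$ (depending only on $\Gamma_{ii}$) for the lobe masses of any double bubble in a minimizer, so that $m_1m_2$ cannot degenerate; ``overly small components waste perimeter'' is not a proof and is not even the right mechanism, since a single small component is perfectly admissible (the lemmas allow one exceptional small bubble). The paper obtains $\overline{m_i}^d$ by a mass-transfer first-variation argument: move $\vep$ of mass from the small lobe to a \emph{large} type-I single bubble, whose existence is exactly what $M_1>4M_1^*$ buys (via Lemma~\ref{atMostTwo} and Lemma~\ref{single_lower}), and use $\partial p/\partial m_1=1/r_1\to\infty$; your reading of $M_i>4M_i^*$ as ``room to accommodate the redistributed mass'' misses this. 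In (a), constructing a good competitor with $K_1$ doubles and $K_2$ singles cannot by itself show that \emph{every} minimizer has that many; you need a priori structure of arbitrary minimizers. The paper's proof uses the key observation that for $\Gamma_{12}=0$ two singles of different types are always beaten by one double bubble (since $p(m_1,m_2)\le 2\sqrt{\pi m_1}+2\sqrt{\pi m_2}$), so all singles are of one type, and then counts: every lobe or single has mass at most $M_i^*$ (Lemma~\ref{max mass}) while all but one lobe has mass at least $m_i^*$ (Lemmas~\ref{all but one} and~\ref{flex}), which converts largeness of $M_1,M_2$ into lower bounds on the numbers of doubles and singles. Your sketch contains neither the same-type reduction nor the counting lemmas, so the ``direct comparison against any configuration with fewer doubles or singles'' step cannot be carried out as stated.
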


These are proven via delicate comparison arguments based on the geometry of double bubbles, in Section~\ref{section 2}.
For small $\Gamma_{12}$ and $|M_1-M_2|$, intuition and numerics suggest that minimizers should consist of all double bubbles, which after all are preferred by the isoperimetric inequality for 2-clusters.  However, the non-explicit nature of the perimeter function for double bubbles makes such intricate comparison arguments very challenging.

In Section~\ref{section 3} we consider the interaction terms of order $|\log \eta|^{-1}$ and prove a second-level $\Gamma$-convergence result, Theorem~\ref{twodsecond}.  Minimizers of the functional $F_0$ defined there will determine the crystalline lattice of the concentration points defined by the limit of minimizers of $v_\eta$.  

\medskip

Although experimentally an almost unlimited number of architectures can be synthetically accessed in ternary systems like triblock copolymers  \cite{block}, the
mathematical study of \eqref{energyu} is still in its early stages, due to its complexity. One-dimensional stationary points to the Euler-Lagrange equations of \eqref{energyu} were found in \cite{lameRW, blendCR}.
Two and three dimensional stationary configurations were studied recently in \cite{double, doubleAs, stationary, disc, evolutionTer}.

While mathematical interest in triblock copolymers via the energy functional \eqref{energyu} is relatively recent, there has been much progress in mathematical analysis of nonlocal binary systems.  Much early work concentrated on the diffuse interface Ohta-Kawasaki density functional theory for diblock copolymers \cite{equilibrium, nishiura, onDerivation},
\begin{eqnarray} \label{energyB}
\mathcal{E}^{} (u) :=  \int_{\mathbb{T}^n} |\nabla u | +    \gamma \int_{\mathbb{T}^n} \int_{\mathbb{T}^n} G_{\mathbb{T}^n}(x-y)\; u (x) \; u (y) dx dy,
\end{eqnarray}
with a single mass or volume constraint. The dynamics for a gradient flow for \eqref{energyB} with small volume fraction were developed in \cite{hnr, gc}. 
All stationary solutions to the Euler-Lagrange equation of \eqref{energyB} in one dimension were known to be local minimizers \cite{miniRW}, and
many stationary points in two and three dimensions have been found that match the morphological phases in diblock copolymers \cite{oshita, many, spherical, oval, ihsan, Julin3, cristoferi, afjm}.
The sharp interface nonlocal isoperimetric problems have been the object of great interest, both for applications and for their connection to problems of minimal or constant curvature surfaces.
Global minimizers of \eqref{energyB}, and the related Gamow's Liquid Drop model describing atomic nuclei, were studied in \cite{otto, muratov, bi1, st, GMSdensity, knupfer1, knupfer2, Julin, ms, fl} for various parameter ranges.
Variants of the Gamow's liquid drop model with background potential or with an anisotropic surface energy replacing the perimeter, are studied in \cite{ABCT1,luotto, cnt}.
Higher dimensions are considered in \cite{BC, cisp}.
Applications of the second variation of \eqref{energyB} and its connections to minimality and $\Gamma$-convergence are to be found in \cite{cs,afm,Julin2}.
 The bifurcation from spherical, cylindrical and lamellar shapes with Yukawa instead of Coulomb interaction has been done in \cite{fall}.
Blends of diblock copolymers and nanoparticles \cite{nano, ABCT2} and blends of diblock copolymers and homopolymers are also studied by \cite{BK,blendCR}.
Extension of the local perimeter term to nonlocal $s$-perimeters is studied in \cite{figalli}.

\section{Geometric Properties of Global Minimizers} \label{section 2}

In this secction we analyze the geometric properties of minimizers of  $\overline{e_0}(M)$.  We recall \eqref{mine0}
\begin{eqnarray} 
\overline{e_0}(M)=
\inf  \left \{ \sum_{k=1}^{\infty}  e_0 (m^k)  : m^k = (m_1^k, m_2^k), \ m_i^k \geq 0, \sum_{k=1}^{\infty} m_i^k = M_i, i = 1, 2 \right \}, \notag
\end{eqnarray}
where $M=(M_1,M_2)$ and \eqref{e0m}
\begin{eqnarray} 
e_0^{} (m)  = p(m_1, m_2) + \sum_{i, j=1}^2 \frac{\Gamma_{ij} m_i m_j }{4\pi}. \notag
\end{eqnarray}
Unfortunately, for double bubbles $p(m_1,m_2)$ admits no such simple formula.
   The following Lemmas \ref{c' sharp}, \ref{all but one}, and \ref{flex}, will help us overcome this difficulty.  We present the proofs in the case of double bubbles; the degenerate single bubble cases are completely analogous and in most cases much simpler.
   

 \begin{figure}[!htb]
\centering
 \includegraphics[width=10cm]{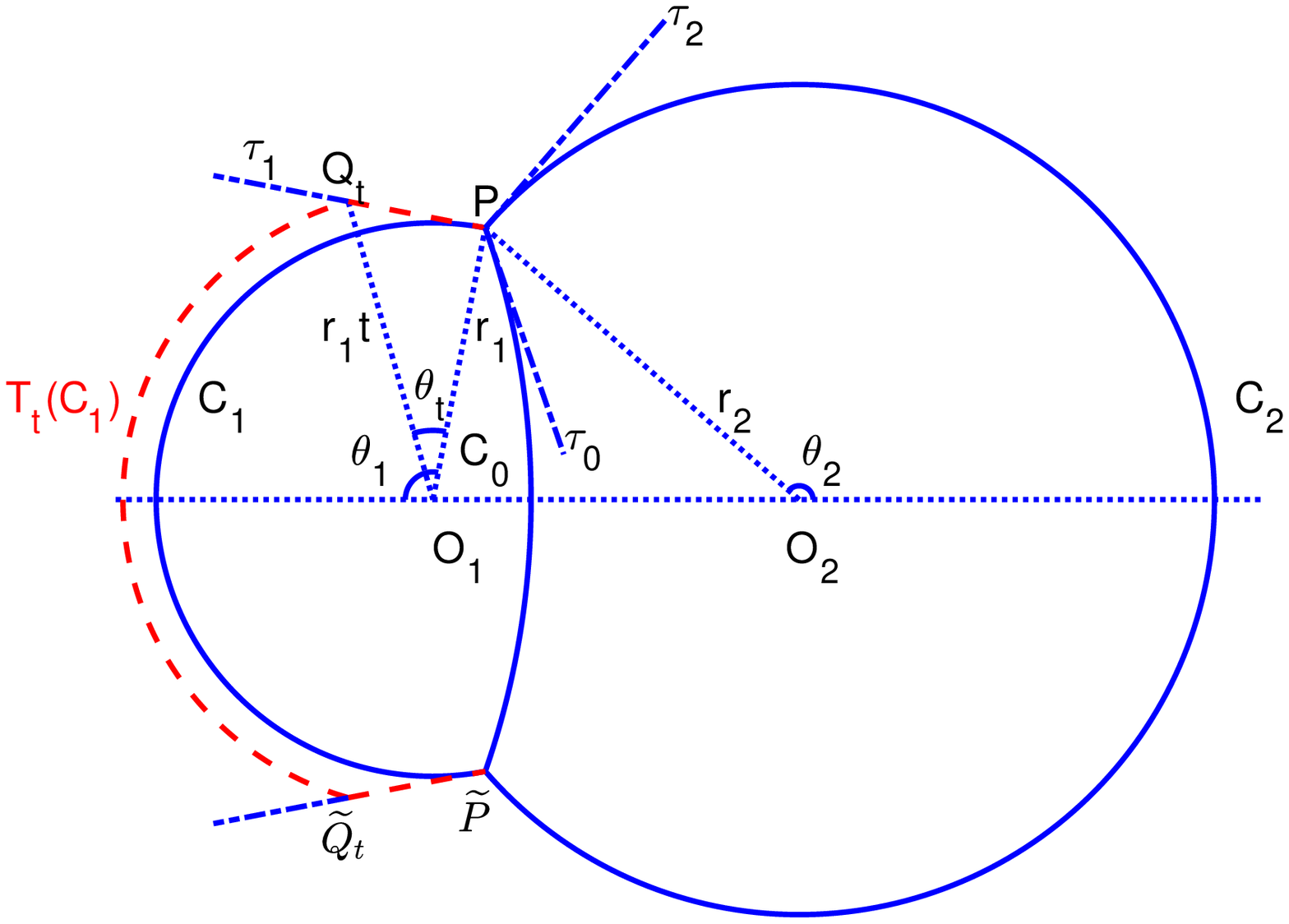} 
  \caption{Construction for the upper bound of $p(m_1+\vep,m_2)$.}
   \label{con1}
\end{figure}

\begin{lemma}\label{c' sharp}
 It holds
 \[\frac{\pd}{\pd m_i} p(m_1,m_2) =\frac{1}{r_i},\qquad i=1,2,\]
 where $r_i = r_i (m_1, m_2)$.
\end{lemma}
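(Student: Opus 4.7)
The plan is to establish the formula by sandwiching the one-sided difference quotients $[p(m_1+\vep,m_2)-p(m_1,m_2)]/\vep$ between matching bounds as $\vep \to 0$; by symmetry it suffices to treat $i=1$. Everything rests on a single geometric identity, which I would verify first by elementary calculus: fix two points $T_1,T_2\in\RR$ with $|T_1-T_2|=2c$ and consider the one-parameter family of circular arcs through $T_1,T_2$, parametrized by the half-angle $\th\in(0,\pi)$ so that the radius is $r(\th)=c/\sin\th$. Writing $L(\th)$ for the arc length and $A(\th)$ for the area of the circular segment cut off by the chord $T_1T_2$, a direct computation gives
\[
\frac{dL}{dA} \;\equiv\; \frac{1}{r(\th)}
\]
identically in $\th$, uniformly across the regimes $\th<\pi/2$, $\th=\pi/2$, and $\th>\pi/2$ (the last being the generic case for the outer arc of the larger bubble).

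For the upper bound, I would take the minimizing double bubble $(A_1,A_2)$ realizing $p(m_1,m_2)$, with triple junctions $T_1,T_2$ and outer bubble-1 arc on a circle of radius $r_1$, and construct the admissible competitor $(A_1',A_2)$ for $(m_1+\vep,m_2)$ depicted in Figure~\ref{con1}: keep the interior membrane and the outer bubble-$2$ arc fixed, and replace only the outer bubble-$1$ arc by another arc through $T_1,T_2$ on a circle of slightly smaller radius, chosen so that $A_1'$ bulges outward just enough to give $|A_1'|=m_1+\vep$. Since $A_2$ is unchanged, $(A_1',A_2)$ is a valid 2-cluster. By the identity above, the resulting change in perimeter is $\vep/r_1 + o(\vep)$, and minimality then gives
\[
p(m_1+\vep,m_2) \;\le\; p(m_1,m_2) + \frac{\vep}{r_1} + o(\vep).
\]

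For the lower bound I would apply the mirror construction to a minimizing double bubble $B^{\vep}$ for $(m_1+\vep,m_2)$: contract its outer bubble-$1$ arc through its two triple junctions to release area $\vep$, producing a competitor for $(m_1,m_2)$ with perimeter $\text{Per}(B^{\vep}) - \vep/r_1^{\vep} + o(\vep)$, where $r_1^{\vep}$ is the bubble-$1$ outer radius of $B^{\vep}$. Minimality of $p(m_1,m_2)$ gives the reverse inequality, provided $r_1^{\vep}\to r_1$ as $\vep\to 0$. This last convergence I would justify by standard compactness for planar 2-clusters (cf.\ \cite[Ch.~30]{maggi}) together with uniqueness of the minimizing double bubble at fixed areas, which implies continuous dependence of all geometric parameters on $(m_1,m_2)$. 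Matching the two bounds yields both the existence of the partial derivative and its value.

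The main obstacle is performing the first-order asymptotic analysis of $L$ and $A$ along the arc family with enough care to isolate the $\vep/r_1$ term and fold everything else into $o(\vep)$; the calculation is cleanest if one parametrizes by $\th$ rather than by $r$, since the formulas $L=2c\th/\sin\th$ and $A=c^2(\th-\sin\th\cos\th)/\sin^2\th$ yield $dL/d\th$ and $dA/d\th$ that differ by exactly the factor $c/\sin\th = r$. A secondary point is verifying that the modified cluster is genuinely admissible—that the perturbed outer bubble-$1$ arc does not meet the membrane or the outer bubble-$2$ arc—but this is clear for all sufficiently small $|\vep|$ since these arcs stand off by a positive distance from the unperturbed arc at the equilibrium configuration.
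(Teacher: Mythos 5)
Your proposal is correct, but it closes the argument differently from the paper, and the comparison is instructive. For the upper bound the paper also perturbs only the lobe-$1$ outer arc of the optimal bubble, but it does so by scaling $C_1$ about its center $O_1$, which moves the endpoints off the triple junctions and forces the insertion of two small tangent segments plus a Taylor expansion in the scaling parameter $\dt$; your variation within the pencil of circular arcs through the fixed junctions $T_1,T_2$ avoids the segments entirely and rests on the exact identity $dL/dA=1/r$ (which I checked: $dL/d\th=2c(\sin\th-\th\cos\th)/\sin^2\th$ and $dA/d\th=2c^2(\sin\th-\th\cos\th)/\sin^3\th$), so your first-order bookkeeping is cleaner. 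The more substantive difference is in the matching bound: the paper performs both constructions (enlarging and shrinking) at the \emph{same} bubble $B$, which bounds the forward quotient above by $1/r_1$ and the backward quotient below by $1/r_1$, and this pins the value only because the paper tacitly assumes $p$ is differentiable in $m_1$ (it opens by equating the two one-sided limits with $\pd p/\pd m_1$); your route instead perturbs the minimizer $B^{\vep}$ at masses $(m_1+\vep,m_2)$, so you genuinely sandwich one and the same difference quotient and thereby \emph{prove} existence of the derivative, at the price of needing $r_1^{\vep}\to r_1$, which you correctly reduce to compactness plus uniqueness of the standard double bubble \cite{FABHZ} (alternatively it follows from the explicit relations \eqref{e1A}--\eqref{e6A}). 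Two small inaccuracies in your write-up, neither fatal: the bulged arc has smaller radius only when $\th_1<\pi/2$ (for $\th_1>\pi/2$ the radius grows as the segment area grows), which is harmless since your identity is uniform in $\th$; and the admissibility of the competitor cannot be justified by saying $C_0$ and $C_2$ ``stand off by a positive distance'' from the unperturbed arc, since they meet it at the triple junctions --- the correct argument is that the three arcs meet at angles $2\pi/3$, so a small perturbation of the lobe-$1$ arc with endpoints fixed remains transversal at the junctions and creates no new intersections, while away from the junctions there is indeed a positive gap.
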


\begin{proof}
 Since
 \[\frac{\pd }{\pd m_1}p(m_1,m_2)
 =\lim_{\vep\to 0^+}\frac{p(m_1+\vep,m_2)-p(m_1,m_2)}{\vep}
 =\lim_{\vep\to 0^+}\frac{p(m_1,m_2)-p(m_1-\vep,m_2)}{\vep}, \]
 we need to bound $p(m_1\pm\vep,m_2)$ from above.

Denote by $B$ the double bubble with masses $(m_1,m_2)$.
Denote by $C_i$ the circular arc of the boundary of the lobe with mass $m_i$, radius $r_i$ and center $O_i$, $i=1,2$. 
Also denote by $C_0$ the central arc, by $P$ one of the triple junction points, and by $\tau_i$ the tangent lines
to $C_i$ at $P$, $i=0,1,2$. Being a double bubble, the angle between each two $\tau_i$
and $\tau_j$ with $i\neq j$ is $2\pi/3$. 

Let $T_t(C_1)$ be the scaling of  $C_1$, still centered at $O_1$ and $t>0$ is the ratio.

  \medskip

 {\it Upper bound.}
 We first bound $p(m_1+\vep,m_2)$ from above. 
 To this purpose, it suffices to construct an admissible competitor $B_t$ (which has mass $x+\vep$
 of type I constituent, and mass $m_2$ of type II constituent), which does not need be to be
 a double bubble. We describe only the construction near $P$, since the construction near
 the other triple junction $\widetilde{P}$ will be analogous. In the construction, we do not need to alter the right lobe
 (the one with mass $m_2$); see Figure \ref{con1}.
 
  \begin{itemize}
 \item First, we enlarge $C_1$, replacing it with $T_t(C_1)$ with $t=1+\dt$, for some $\dt=\dt(\vep)$
 that will be determined later.

 \item  We connect the triple junction point $P\in C_0\cup C_2$ to $T_{t}(C_1)$ with the segment 
  $S_t:= \overline{PQ_t}$ where $Q_t:= T_t(C_1)\cap \tau_1$.

 \item Similarly repeat this for the other triple junction point $\widetilde{P}$, which we connect to 
$T_{t}(C_1)$ by a segment $\widetilde{S_t} := \overline{\widetilde{P}\widetilde{Q}_t}$, where $\widetilde{Q}_t$
denotes the reflection
of $Q_t$ with respect to $\overline{O_1O_2}$.
 \end{itemize}
 The competitor will be the region inside 
 \[ B_{t}:=C_0\cup C_2 \cup \wideparen{QQ_t}  \cup S_t \cup \widetilde{S}_t.\]
 Let $\th_t:=\angle P O_1 Q_t$. Note that the triangle
 $\triangle P O_1 Q_t$ satisfies
 \[|O_1-Q_t|=r_1t,\quad  |O_1-P|=r_1, \quad \cos \th_t = \frac{|O_1-P|}{|O_1-Q_t|}=\frac{1}{t},\quad \H^1(S^t)= r_1\tan \th_t.\]
 Choose $t=1+\dt$, with $0<\dt\ll1$, and note that
 \[\cos \th_t = 1-\frac{(\th_t)^2 }{2} +O((\th_t)^4) = \frac{1}{1+\dt} = 1-\dt+o(\dt),\]
 hence $\th_t = \sqrt{2\dt} +o(\sqrt{\dt})$. 
  The piece of arc of $C_1$
 inside $\triangle P O_1 Q_t$ has length $r_1 \th_t$. Thus
\[|\H^1(S_t)-\H^1(C_1\cap \triangle P O_1 Q_t)| = r_1(\tan \th_t-\th_t) 
=r_1\Big( \frac{(\th_t)^3}3+O((\th_t)^5)\Big)=
O(\dt\sqrt{\dt}),\]
and similarly
\[|\H^1(\widetilde{S}_t)-\H^1(C_1\cap \triangle \widetilde{P } O_1 \widetilde{Q}_t)| = O(\dt\sqrt{\dt}).\]
Thus, the difference in perimeter is
\begin{eqnarray*}
&& \H^1(\partial B_{t})-\H^1( \partial B)\\
& = &  \left [\H^1\left(\wideparen{QQ_t}  \right) +
\H^1(S_t)+\H^1(\widetilde{S}_t ) + \H^1(C_0)+\H^1(C_2) \right ] -\left [ \H^1(C_1) + \H^1(C_0)+\H^1(C_2) \right ]\\
&=&  2 r_1 (1+\delta)(\theta_1 - \theta_t)  - 2 r_1 (\theta_1 - \theta_t) + O(\dt\sqrt{\dt}) \\
 &=& 2\th_1r_1\dt+ O(\dt\sqrt{\dt}).
\end{eqnarray*}
Now we need to estimate the difference in area: 
\begin{eqnarray*}
 \H^2(B_t)  - \H^2(B) 
&=&  (\th_1-\th_t ) r_1^2[(1+\dt)^2 -1] + 2 \left [ \H^2(\triangle P O_1 Q^t)-\frac{\th_t r_1^2}2 \right ]\\
&=&  2\th_1 r_1^2 \dt +O(\dt\sqrt{\dt}) + r_1^2 (\tan \th_t -\th_t ) \\
&=& 2\th_1 r_1^2 \dt +O(\dt\sqrt{\dt}).
\end{eqnarray*}
Thus the difference in area between the competitor $B_t$ and the original double bubble
$B$ is
\[2\th_1 r_1^2 \dt +O(\dt\sqrt{\dt}),\]
which has to be equal to $\vep$. Thus $\dt=\vep/2\th_1 r_1^2 +o(\vep)$, and
\[
\lim_{\vep\to 0^+}\frac{p(m_1+\vep,m_2)-p(m_1,m_2)}{\vep}
\le\lim_{\vep\to 0^+}\frac{\H^1(\partial B_t)-p(m_1,m_2)}{\vep} 
=\lim_{\vep\to 0^+}\frac{2\th_1r_1\dt+ O(\dt\sqrt{\dt})}{\vep} 
=\frac1{r_1}.
\]
 \medskip
 
 \begin{figure}[!htb]
\centering
 \includegraphics[width=10cm]{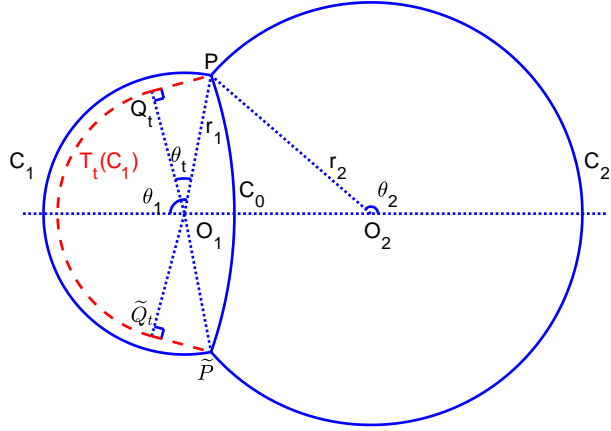} 
  \caption{Construction for the lower bound of $p(m_1-\vep,m_2)$.}
   \label{con2}
\end{figure}
 
 {\it Lower bound.} 
The construction for the lower bound is very similar. Instead of enlarging $C_1$, we now have to shrink it; see Figure \ref{con2}.

 \begin{itemize}
 \item First, we shrink $C_1$, replacing it with $T_t(C_1)$ with $t=1-\dt$, for some $\dt=\dt(\vep)$
 that will be determined later.
 
 \item  
  Let $\theta_t$ be the unique angle such that  the segment 
  $S_t:= \overline{PQ_t}$ is tangent to $T_t(C_1)$ at $Q_t$.

 \item Similarly repeat this for the other triple junction point $\widetilde{P}$, which we connect to 
$T_{t}(C_1)$ by a segment $\widetilde{S_t} := \overline{\widetilde{P}\widetilde{Q}_t}$, where $\widetilde{Q}_t$
denotes the reflection
of $Q_t$ with respect to $\overline{O_1O_2}$.
 \end{itemize}

 Let the competitor be the region inside
 \[ B_{t}:=C_0\cup C_2 \cup \wideparen{QQ_t}  \cup S_t \cup \widetilde{S}_t.\]
Note that our geometric construction gives
\[|O_1-Q_t|=r_1t,\quad \H^1(S_t)=r_1\sin \th_t, \quad \th_t:=\angle PO_1 Q_t = \arccos \frac{|O_1-Q_t|}{|O_1-P|}=t.\]
Choosing $t=1-\dt$ gives again $\th_t:= \sqrt{2\dt}+o(\sqrt\dt)$.

The difference in perimeter is thus
\begin{eqnarray*}
&&\H^1( \partial B) -  \H^1(\partial B_{t})\\
& = & \left [ \H^1(C_1) + \H^1(C_0)+\H^1(C_2) \right ] - \left [\H^1\left(\wideparen{QQ_t}  \right) +
\H^1(S_t)+\H^1(\widetilde{S}_t ) + \H^1(C_0)+\H^1(C_2) \right ] \\
&=&  2 \theta_1 r_1 -2 (\theta_1-\theta_t)r_1(1-\delta) - 2r_1 \sin \theta_t + O(\dt\sqrt{\dt}) \\
 &=& 2\th_1r_1\dt+ O(\dt\sqrt{\dt}).
\end{eqnarray*}
And the difference in area is: 
\begin{eqnarray*}
 \H^2(B)  - \H^2(B_t) 
&=&  (\th_1-\th_t ) r_1^2[1-(1-\dt)^2 ] + 2 \left [ \frac{\th_t r_1^2}2 - \H^2(\triangle P O_1 Q^t) \right ]\\
&=&  2\th_1 r_1^2 \dt +O(\dt\sqrt{\dt}) + r_1^2 (\th_t  - \sin \th_t \cos \th_t) \\
&=& 2\th_1 r_1^2 \dt +O(\dt\sqrt{\dt}).
\end{eqnarray*}
Since we need the area difference to be $\vep$, we get $2\th_1 r_1^2 \dt +O(\dt\sqrt{\dt})=\vep$.
 Thus $\dt=\vep/2\th_1 r_1^2 +o(\vep)$, and
\[
\lim_{\vep\to 0^+}\frac{p(m_1,m_2)-p(m_1-\vep,m_2)}{\vep}
\geq \lim_{\vep\to 0^+}\frac{p(m_1,m_2) - \H^1(\partial B_t)}{\vep} 
=\lim_{\vep\to 0^+}\frac{2\th_1r_1\dt+ O(\dt\sqrt{\dt})}{\vep} 
=\frac1{r_1},
\]
concluding the proof.
\end{proof}


\begin{lemma}\label{all but one}
 Consider an arbitrary minimizing configuration $\mathcal{B}$ of \eqref{mine0} containing at least two double bubbles, denoted by $B_k$, $k=1,2,\cdots$.
 Let $m_1^k$ and $m_2^k$ denote the masses of the two lobes of $B_k$. 
 Then
 the pure second derivatives satisfy
  \[\frac{\partial^2 e_0^{}(m_1^k,m_2^k)}{\partial (m_1^k)^2},\ \frac{\partial^2 e_0^{} (m_1^h,m_2^h)}{\partial (m_2^h)^2}  \geq 0 \] 
 for all except at most one such index $k$ (resp. $h$).

\end{lemma}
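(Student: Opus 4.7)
The plan is to exploit a Lagrange-multiplier identity available at any minimizer and then run a second-order variational argument based on swapping a small amount of one type of mass between two double bubbles. At a minimizer each double-bubble mass $m_i^k > 0$ is an interior variable, constrained only by the two conservation equations $\sum_k m_i^k = M_i$, so there must exist Lagrange multipliers $\lambda_1, \lambda_2$ with
\[ \frac{\partial e_0}{\partial m_1}(m^k) = \lambda_1, \qquad \frac{\partial e_0}{\partial m_2}(m^k) = \lambda_2 \]
for every double-bubble index $k$. By Lemma \ref{c' sharp} these amount to $1/r_1^k + (\Gamma_{11} m_1^k + \Gamma_{12} m_2^k)/(2\pi) = \lambda_1$ and the companion identity involving $r_2^k$.

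Next, for any two distinct double-bubble indices $k \neq h$, I would consider the one-parameter mass-swap family $\tilde m_1^k = m_1^k + t$, $\tilde m_1^h = m_1^h - t$, with all other masses unchanged. This preserves both total mass constraints and, for $|t|$ small, keeps both perturbed objects as genuine double bubbles. Setting
\[ g(t) := e_0(m_1^k + t, m_2^k) + e_0(m_1^h - t, m_2^h), \]
minimality of $\mathcal{B}$ forces $t = 0$ to minimize $g$. The first-order term $g'(0) = \partial_{m_1} e_0(m^k) - \partial_{m_1} e_0(m^h)$ vanishes by the Lagrange condition, so
\[ 0 \leq g''(0) = \frac{\partial^2 e_0}{\partial m_1^2}(m^k) + \frac{\partial^2 e_0}{\partial m_1^2}(m^h). \]
Hence two distinct indices cannot simultaneously have $\partial^2_{m_1} e_0 < 0$, which gives the claimed ``at most one'' conclusion for $m_1$; the analogous type-II mass swap handles $\partial^2_{m_2} e_0$.

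The one technical point that needs care is the $C^2$-regularity of $e_0$ used in the Taylor expansion above, since Lemma \ref{c' sharp} supplies only the first derivative. I would justify it by noting that the double bubble is uniquely determined by its mass pair $(m_1, m_2) \in (0,\infty)^2$ (see \cite{FABHZ}) and that its arcs, centers, and radii depend smoothly on those masses, so $e_0$ is in fact smooth on $(0,\infty)^2$. As a safer alternative that sidesteps any regularity delicacy, a symmetric second-difference version of the same perturbation argument---replacing $g''(0) \geq 0$ by $g(t) + g(-t) - 2g(0) \geq 0$ and letting $t \to 0^+$---delivers the same sign conclusion and would be my fallback if the smoothness of $r_1(m_1, m_2)$ required more work. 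This regularity/interpretation question is the only place I expect real friction; the rest of the argument is a direct application of the second-order necessary condition for constrained minima.
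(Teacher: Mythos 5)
Your proposal is correct and follows essentially the same route as the paper: a pairwise mass-swap perturbation between two double bubbles, where the vanishing first-order term (your Lagrange-multiplier identity is just a repackaging of the paper's equal-first-derivative condition) forces the sum of the two pure second derivatives to be nonnegative, so at most one index can have a negative second derivative. Your remark on the $C^2$-regularity of $e_0$ (or the second-difference fallback) addresses a point the paper's Taylor expansion leaves implicit, but the argument is the same.
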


 \begin{proof}
 For notational convenience in the proof we denote $x_k:=m_1^k$,  $y_k:=m_2^k$. Consider two (arbitrary) different double bubbles $B_k$, $B_h$. Then
  \begin{align*}
   e_0^{} (x_k+\vep,y_k)-e_0^{} (x_k,y_k) & = \vep \frac{\pd e_0^{} (x_k,y_k)}{\pd x_k} +\frac{\vep^2}{2} \frac{\pd^2 e_0^{} (x_k,y_k)}{\pd x_k^2} +O(\vep^3),\\
   e_0^{} (x_h-\vep,y_h)-e_0^{} (x_h,y_h) & = -\vep \frac{\pd e_0^{} (x_h,y_h)}{\pd x_h} +\frac{\vep^2}{2} \frac{\pd^2 e_0^{} (x_h,y_h)}{\pd x_h^2}+O(\vep^3),
  \end{align*}
hence the minimality of $\mathcal{B}$ gives the necessary condition
\begin{align*}
 0&\le e_0^{} (x_k+\vep,y_k)+e_0^{} (x_h-\vep,y_h)+ \sum_{j\geq 1,\ j\neq k,h}e_0^{} (x_j,y_j) -\sum_{j\geq 1}e_0^{} (x_j,y_j) \\
 &=\vep \bigg(\frac{\pd e_0^{} (x_k,y_k)}{\pd x_k}- \frac{\pd e_0^{} (x_h,y_h)}{\pd x_h} \bigg)+\frac{\vep^2}{2}\bigg( \frac{\pd^2 e_0^{} (x_k,y_k)}{\pd x_k^2}+\frac{\pd^2 e_0^{} (x_h,y_h)}{\pd x_h^2}\bigg) +O(\vep^3),
\end{align*}
hence, by the arbitrariness of $\vep$,
\[ \frac{\pd e_0^{} (x_k,y_k)}{\pd x_k}=\frac{\pd e_0^{} (x_h,y_h)}{\pd x_h}, \quad  \frac{\pd^2 e_0^{} (x_k,y_k)}{\pd x_k^2}+ \frac{\pd^2 e_0^{} (x_h,y_h)}{\pd x_h^2} 
\geq 0, \qquad \fal k\neq h.\]
The proof for the pure second derivative in $y_k$ is completely analogous.
\end{proof}
%


 \begin{lemma}\label{flex}
Given $\ggm_{ii}$, there exist constants $m_i^*$, 
such that
\[ \frac{\pd^2 e_0^{} (m_1,m_2)}{\pd m_i^2} <0 \qquad \text{for all } m_i< m_i^*,\ i=1,2,\]
where $m_i^*$ only depends on $\Gamma_{ii}$. 
 \end{lemma}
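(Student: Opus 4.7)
By Lemma \ref{c' sharp}, $\partial p/\partial m_i = 1/r_i$, and so
\[
\frac{\partial^2 e_0}{\partial m_i^2} = -\frac{1}{r_i^2}\,\frac{\partial r_i}{\partial m_i} + \frac{\Gamma_{ii}}{2\pi}.
\]
The lemma thus reduces to showing that $\partial_{m_i} r_i / r_i^2$ can be bounded from below by a constant exceeding $\Gamma_{ii}/(2\pi)$ for all $m_i$ below a threshold that does \emph{not} depend on $m_j$.

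The plan is to exploit the scaling invariance of the $2$-cluster isoperimetric problem: since the double-bubble shape is determined up to dilation by the mass ratio $\rho := m_j/m_i$, one may write $r_i = f(\rho)\sqrt{m_i}$ for a universal function $f:[0,\infty)\to\mathbb{R}_+$. Direct computation via the chain rule then yields
\[
\frac{\partial_{m_i} r_i}{r_i^2} \;=\; \frac{g(\rho)}{m_i^{3/2}}, \qquad g(\rho)\;:=\;\frac{1}{2f(\rho)} - \frac{\rho f'(\rho)}{f(\rho)^2},
\]
and a uniform lower bound $g(\rho) \ge g_0 > 0$ gives the conclusion with $m_i^* = (2\pi g_0/\Gamma_{ii})^{2/3}$, which depends only on $\Gamma_{ii}$. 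As a sanity check, $f(0) = 1/\sqrt\pi$ (single-bubble limit) gives $g(0) = \sqrt\pi/2$, producing $m_i^* = \pi/\Gamma_{ii}^{2/3}$, the threshold consistent with Theorem \ref{properties}(c).

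To establish $g(\rho)\ge g_0$, I would first derive the identity $\theta_1 + \theta_2 = 4\pi/3$ for the half-angles subtended by the outer arcs $C_1,C_2$ at their centers; this is a direct consequence of writing out the $120^\circ$ triple-junction condition in coordinates (as in the setup of Lemma \ref{c' sharp}) and tracking the three tangent directions at a junction. It yields $\theta_i \in [\pi/3,\pi]$ uniformly in $(m_1,m_2)$. In the regime $m_i \le m_j$, the pressure relation $1/r_0 = 1/r_i - 1/r_j$ forces the interface arc $C_0$ to bulge away from the $i$-th lobe, so that the $i$-th lobe contains a circular segment of the $C_i$-disk of half-angle $\theta_i\ge\pi/3$ and area at least $(\pi/3 - \sqrt3/4)r_i^2$; this gives $f(\rho) \le (\pi/3 - \sqrt 3/4)^{-1/2}$. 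The complementary regime $m_i > m_j$ is handled by noting that the $i$-th lobe is then close to a single bubble, forcing $f(\rho)$ close to $1/\sqrt\pi$. The lower bound $f(\rho)\ge 1/\sqrt\pi$ is immediate from the inclusion of the $i$-th lobe in the $C_i$-disk. A final step controlling $f'(\rho)$ at the two endpoints $\rho = 0$ (single bubble) and $\rho\to\infty$ (``cap on flat surface'' limit), and propagating through the interior by implicit differentiation of the mass--radius relations, then yields $g(\rho)\ge g_0 > 0$ uniformly.

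The main obstacle is the uniform control of $f'(\rho)$: the positivity of $g$ is equivalent to the monotonicity statement that $r_i^2/m_j$ is strictly decreasing in $m_j$ for fixed $m_i$, an intuitive but nontrivial assertion given the absence of a closed-form expression for $p(m_1,m_2)$. The $120^\circ$ junction condition, the pressure law, and implicit differentiation of the geometric parameters must be combined carefully to propagate boundary information on $f'$ across the full range $\rho\in(0,\infty)$.
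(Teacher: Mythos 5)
Your setup coincides with the paper's: both start from Lemma \ref{c' sharp}, write $\frac{\partial^2 e_0}{\partial m_i^2}=\frac{\Gamma_{ii}}{2\pi}-\frac{1}{r_i^2}\frac{\partial r_i}{\partial m_i}$, and use the $\sqrt{\lambda}$-homogeneity of $p$ so that everything reduces to a lower bound on a ratio-dependent quantity; your chain-rule identity for $g(\rho)$ and the resulting threshold $m_i^*=(2\pi g_0/\Gamma_{ii})^{2/3}$ are correct, as is the sanity check $g(0)=\sqrt{\pi}/2$. The problem is that the estimate $g(\rho)\ge g_0>0$ is the entire content of the lemma, and you never prove it. Knowing $f$ is bounded above and below, and controlling $f'$ at the two endpoint geometries ($\rho=0$ single bubble, $\rho\to\infty$ lens limit), gives no control of $f'$ in the interior; ``propagating through the interior by implicit differentiation of the mass--radius relations'' is a restatement of the task, not an argument, and you concede as much when you identify the uniform control of $f'$ as the main obstacle. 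So the proposal carries out the easy bookkeeping and defers exactly the step where all the work lies: there is no quantitative differentiation of the double-bubble system \eqref{eq1}--\eqref{e6} anywhere in it.

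For comparison, this is precisely what the paper computes. It parametrizes the bubble by $\vep=\pi/3-\theta_0$, so that $m_1=\frac{N(\vep)}{D(\vep)}m_2$ and $r_1^2=\frac{\sin^2\vep}{\sin^2(\pi/3+\vep)\,D(\vep)}m_2$, expands $N,D,N',D'$ to first order in $\vep$, and obtains $\frac{\partial r_1}{\partial m_1}=\frac{1}{2r_1}\,\frac{\partial_\vep r_1^2}{dm_1/d\vep}\ge C/r_1$ for all $\vep<\vep_0$ with universal constants; combined with $\frac{\pi}{3}r_1^2\le m_1\le\pi r_1^2$ this yields $\frac{\partial^2 p}{\partial m_1^2}\le -C' m_1^{-3/2}$ and hence a threshold depending only on $\Gamma_{11}$. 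Note also that the paper's expansion covers the regime $\vep\to 0$, i.e.\ $m_1\ll m_2$ (large $\rho$ in your notation), whereas your claimed bound is uniform over all $\rho\in(0,\infty)$ --- a strictly stronger statement, needed if one insists the threshold work for every value of the other mass, including comparable small masses. So to complete your route you must either carry out an explicit computation of $g$ over the full range $\theta_0\in(0,\pi/3)$ from \eqref{eq1}--\eqref{e6}, or restrict to the asymptotic regime and argue (as the paper implicitly does) that this regime suffices; in either case a concrete version of the paper's perturbative calculation is unavoidable and is missing from your proposal.
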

In particular, it is quite important for our constructions that $m_i^*$ do not depend neither on
$\ggm_{12}$, nor on the total masses $M_i$, $i=1,2$.

\begin{proof}
 We prove the result for $i=1$. The case $i=2$ is completely analogous.
  By Lemma \ref{c' sharp}, we have
  \[\frac{\pd e_0^{} (m_1,m_2)}{\pd m_1} = \frac{\ggm_{11}m_1+\ggm_{12}m_2 }{2\pi} +\frac{1}{r_1},\qquad
  \frac{\pd^2 e_0^{} (m_1,m_2)}{\pd m_1^2} = \frac{\ggm_{11} }{2\pi} +\frac{\pd}{\pd m_1}\frac{1}{r_1},\]
  where $r_1 = r_1 (m_1, m_2)$.
  So we need to show that there exists a threshold $m_1^*$ such that, for any $m_1<m_1^*$,
  \[\frac{\pd}{\pd m_1}\frac{1}{r_1}<-\frac{\ggm_{11}}{2\pi}.\]
 Thus it suffices to show that
\begin{equation}
\lim_{m_1\rightarrow 0}  \frac{\pd}{\pd m_1}\frac{1}{r_1}  =-\infty.
\label{liminf}
\end{equation}
  \begin{figure}[!htb]
\centering
 \includegraphics[width=8cm]{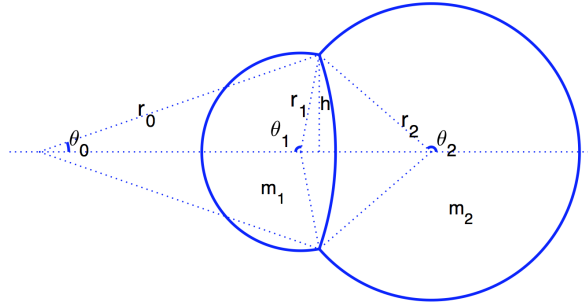}
  \caption{ An asymmetric double bubble with radii $r_i$ and half-angles $\theta_i$, $i=0,1,2$.}
   \label{dbs2}
\end{figure}
 For an asymmetric double bubble bounded by three circular arcs of radii $r_1, r_2$ and $r_0$ with $m_1 < m_2$,
notice that $r_1, r_2, r_0$ and $\theta_1, \theta_2, \theta_0$, the half-angles associated with the three arcs, depend on $m_1$ and $m_2$ implicitly through the equations \cite{isenberg}
\begin{eqnarray}
\label{eq1}
m_1 &=& r_1^2 (\theta_1 - \cos \theta_1 \sin \theta_1) + r_0^2 (\theta_0 - \cos \theta_0 \sin \theta_0),   \\
\label{eq2}
m_2  &=&  r_2^2 (\theta_2 - \cos \theta_2 \sin \theta_2) - r_0^2 (\theta_0 - \cos \theta_0 \sin \theta_0), \\
\label{e3}
h &=&  r_0 \sin \theta_0=  r_1 \sin \theta_1 = r_2 \sin \theta_2,  \\
\label{e5}
 (r_0)^{-1} &=&(r_1)^{-1} - (r_2)^{-1}, \\
\label{e6}
0 &=& \cos \theta_1 + \cos \theta_2 + \cos \theta_0,
\end{eqnarray}
where $r_0$ is the radius of the common boundary of the two lobes of the double bubble;
$\theta_0$ is half of the angle associated with the middle arc; 
and $h$ is half of the distance between two triple junction points; see Figure \ref{dbs2}.
From \eqref{e3} and \eqref{e5}, we have
\begin{eqnarray} \label{e7}
\sin \theta_1 - \sin \theta_2 - \sin \theta_0 = 0.
 \end{eqnarray}
Combine \eqref{e6} with \eqref{e7}, we get
\begin{eqnarray}
\cos(\theta_1+\theta_0) = - \frac{1}{2},  \text{ and } \cos(\theta_2-\theta_0) = - \frac{1}{2}. \nonumber
\end{eqnarray}
That is,
\begin{eqnarray} \label{e8}
\theta_1 = \frac{2\pi}{3} - \theta_0,   \text{ and } \;  \theta_2 = \frac{2\pi}{3} + \theta_0.
\end{eqnarray}

We are interested in the case $m_1 \to 0$. This implies immediately
$h\to 0$, and $r_2\to \sqrt{m_2/\pi}$. Thus $\th_2\to \pi$, hence $\th_0,\th_1\to \pi/3$.
 Let $\th_0=\pi/3-\vep$, $\th_1=\pi/3+\vep$, $\theta_2 = \pi - \vep$. Thus
from \eqref{e3} we get
\begin{equation*}
h=r_2\sin( \pi -\vep) =r_1\sin (\pi/3+\vep) =r_0\sin (\pi/3-\vep).
\end{equation*}
Thus,
\begin{equation} \label{h and r}
r_1=r_2\frac{\sin \vep}{\sin (\pi/3+\vep)},\ r_0=r_2\frac{\sin \vep}{\sin (\pi/3-\vep)}, 
\end{equation}
and \eqref{eq1}, \eqref{eq2} now read
\begin{align}
m_2&= r_2^2 \bigg[\pi- \vep + \frac12 \sin(2\vep)
- \frac{\sin^2 \vep}{\sin^2 (\pi/3-\vep)}\bigg( \frac{\pi}3 -\vep - \frac12\sin \Big(\frac{2\pi}3 -2\vep\Big)\bigg)\bigg]
\label{m2 vep}\\
m_1 &= r_2^2 \left [\frac{\sin^2 \vep}{\sin^2 (\pi/3+\vep)}\bigg( \frac{\pi}3 +\vep - \frac12\sin \Big(\frac{2\pi}3 +2\vep\Big)\bigg)
+ \frac{\sin^2 \vep}{\sin^2 (\pi/3-\vep)}\bigg( \frac{\pi}3 -\vep - \frac12\sin \Big(\frac{2\pi}3 -2\vep\Big)\bigg) \right ]
\notag
\\
&=m_2\frac{\frac{\sin^2 \vep}{\sin^2 (\pi/3+\vep)}( \frac{\pi}3 +\vep - \frac12\sin (\frac{2\pi}3 
+2\vep))
+\frac{\sin^2 \vep}{\sin^2 (\pi/3-\vep)}( \frac{\pi}3 -\vep - \frac12\sin (\frac{2\pi}3 -2\vep))
}{\pi- \vep + \frac12 \sin(2\vep)
- \frac{\sin^2 \vep}{\sin^2 (\pi/3-\vep)}( \frac{\pi}3 -\vep - \frac12\sin (\frac{2\pi}3 -2\vep))}.
\notag
\end{align}
Let
\begin{align*}
D(\vep)&:= \pi- \vep + \frac12 \sin(2\vep)
- \frac{\sin^2 \vep}{\sin^2 (\pi/3-\vep)} \Big( \frac{\pi}3 -\vep - \frac12\sin\Big(\frac{2\pi}3 -2\vep\Big)\Big)
=\pi+O(\vep^2),\\
N(\vep)&:=\frac{\sin^2 \vep}{\sin^2 (\pi/3+\vep)}\Big( \frac{\pi}3 +\vep - \frac12\sin \Big(\frac{2\pi}3 
+2\vep\Big)\Big)
+\frac{\sin^2 \vep}{\sin^2 (\pi/3-\vep)}\Big( \frac{\pi}3 -\vep - \frac12\sin \Big(\frac{2\pi}3 -2\vep\Big)\Big)
=O(\vep^2),
\end{align*}
so we have $m_1=\frac{N(\vep)}{D(\vep)}m_2$, and by direct computation,
\begin{align*}
N'(\vep) &= \frac{\sin 2\vep}{\sin^2 (\pi/3+\vep)}\Big( \frac{\pi}3 +\vep - \frac12\sin \Big(\frac{2\pi}3 
+2\vep\Big)\Big)
+\frac{\sin 2\vep}{\sin^2 (\pi/3-\vep)}\Big( \frac{\pi}3 -\vep - \frac12\sin \Big(\frac{2\pi}3 -2\vep\Big)\Big)\\
&-\frac{2\sin^2 \vep\cos(\pi/3+\vep)}{\sin^3 (\pi/3+\vep)}\Big( \frac{\pi}3 +\vep - \frac12\sin \Big(\frac{2\pi}3 
+2\vep\Big)\Big)\\
&+\frac{2\sin^2 \vep\cos(\pi/3-\vep)}{\sin^3 (\pi/3-\vep)}\Big( \frac{\pi}3 -\vep - \frac12\sin \Big(\frac{2\pi}3 -2\vep\Big)\Big)\\
&+\frac{\sin^2 \vep}{\sin^2 (\pi/3+\vep)}\Big(  1 - \cos \Big(\frac{2\pi}3 +2\vep\Big)\Big)
+\frac{\sin^2 \vep}{\sin^2 (\pi/3-\vep)}\Big(  -1 +\cos \Big(\frac{2\pi}3 -2\vep\Big)\Big)\\
&=\frac{16}{3}\Big(\frac{\pi}3 -\frac{\sqrt{3}}4 \Big) \vep +O(\vep^2),
\end{align*}
and similarly
\begin{align*}
D'(\vep) &=- 1 +  \cos(2\vep)
+\frac{\sin 2\vep}{\sin^2 (\pi/3-\vep)}\Big( \frac{\pi}3 -\vep - \frac12\sin \Big(\frac{2\pi}3 -2\vep\Big)\Big)\\
&+\frac{2\sin^2 \vep\cos(\pi/3-\vep)}{\sin^3 (\pi/3-\vep)}\Big( \frac{\pi}3 -\vep - \frac12\sin \Big(\frac{2\pi}3 -2\vep\Big)\Big)
+\frac{\sin^2 \vep}{\sin^2 (\pi/3-\vep)}\Big(  -1 +\cos \Big(\frac{2\pi}3 -2\vep\Big)\Big)\\
&=\frac{8}{3}\Big(\frac{\pi}3 -\frac{\sqrt{3}}4 \Big)\vep +O(\vep^2).
\end{align*}
Thus
\begin{align*}
\frac1{m_2}\frac{d m_1}{d \vep} = \frac{N'(\vep)D(\vep)-D'(\vep)N(\vep)}{D(\vep)^2}
=\frac{16}{3\pi}\Big(\frac{\pi}3 -\frac{\sqrt{3}}{4} \Big) \vep+O(\vep^2).
\end{align*}

Now we compute the derivative $\frac{\pd r_1}{\pd \vep}$. From \eqref{h and r} and \eqref{m2 vep} 
we get
\[ r_1^2=r_2^2\frac{\sin^2 \vep}{\sin^2 (\pi/3+\vep) } =\frac{\sin^2 \vep}{\sin^2 (\pi/3+\vep) D(\vep)}m_2,\]
hence
\begin{align*}
\frac1{m_2}\frac{\pd r_1^2}{\pd\vep}=\frac{\sin2 \vep}{\sin^2 (\pi/3+\vep) D(\vep)}
- \frac{2\sin^2 \vep \cos(2\pi/3+\vep)}{\sin^3 (\pi/3+\vep) D(\vep)}
-\frac{\sin^2 \vep D'(\vep)}{\sin^2 (\pi/3+ \vep) D^2(\vep)}
= \frac8{3\pi}\vep +O(\vep^2),
\end{align*}
which then gives
\begin{align*}
\frac{\pd r_1}{\pd m_1}&=\frac1{2r_1} \frac{ \frac{\pd r_1^2}{\pd\vep} }{\frac{d m_1 }{d \vep}}
= \frac1{2r_1} \frac{\frac8{3\pi}\vep +O(\vep^2)}{\frac{16}{3\pi}\Big(\frac{\pi}3 -\frac{\sqrt{3}}{4} \Big) \vep+O(\vep^2)}
\ge \frac{C}{r_1}>0,
\end{align*}
for all sufficiently small $\vep<\vep_0$, with $C,\ \vep_0$ being a universal constants independent of $\ggm_{ij}$ and $M_i$, $i,j=1,2$.
Using the fact that in a double bubble we have $\th_1\in (\pi/3,2\pi/3)$, we get
\[ \frac{\pi}{3}r_1^2\le m_1 \le \pi r_1^2,\]
and hence there exists another constant $C'>0$ such that
\[\frac{\pd }{\pd m_1 } \frac{1}{r_1} =-\frac1{r_1^2}\frac{\pd r_1}{\pd m_1}
 \le - \frac{C'}{m_1^{3/2}} \]
as $m_1\to 0$, and \eqref{liminf} is proven. Then note
\[\frac{\pd^2 e_0^{}  (m_1,m_2) }{\pd m_1^2}= \frac{\ggm_{11}}{2\pi}+  \frac{\pd }{\pd m_1 } \frac{1}{r_1}
\le \frac{\ggm_{11}}{2\pi}  - \frac{C'}{m_1^{3/2}},\]
and the proof is complete.

\end{proof}


\bigskip

The next result shows that in a minimizing configuration of \eqref{mine0}, no single bubble, or lobe of double bubbles, can be too large.


\begin{lemma}\label{max mass}
Let $\mathcal{B}$ be a minimizing configuration of \eqref{mine0}. Then
 there exist no single bubble, nor lobe of double bubble, of $i$-th constituent, having mass greater than
 \[M_i^*:= \frac{8 \pi }{\ggm_{ii}^{2/3}},\qquad i=1,2.\]
\end{lemma}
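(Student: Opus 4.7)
The plan is to argue by contradiction via a splitting construction: if some component of a supposed minimizer had mass exceeding $M_i^*$, I would exhibit an admissible competitor of strictly smaller $\overline{e_0}$ energy obtained by cutting that oversized component in half. By symmetry we may fix $i=1$.

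For the single-bubble case, suppose $\mathcal{B}$ contains a single bubble of type $1$ with mass $m > M_1^* = 8\pi/\Gamma_{11}^{2/3}$. I would replace it with two single bubbles of type $1$, each of mass $m/2$. Using \eqref{e0s}, the energy variation reads
\[
  \Delta \;=\; 2 e_0(m/2,0) - e_0(m,0) \;=\; 2\sqrt{\pi m}\bigl(\sqrt{2}-1\bigr) \;-\; \frac{\Gamma_{11}\,m^{2}}{8\pi}.
\]
The inequality $\Delta<0$ reduces to $m > \bigl[16(\sqrt{2}-1)\bigr]^{2/3}\,\pi/\Gamma_{11}^{2/3}$, and since $[16(\sqrt{2}-1)]^{2/3}\approx 3.53 < 8$, this is implied, with room to spare, by $m > M_1^*$.

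For the double-bubble lobe case, suppose $\mathcal{B}$ contains a double bubble of masses $(m_1,m_2)$ with $m_1 > M_1^*$. The natural competitor is the double bubble of masses $(m_1/2, m_2)$ placed together with a standalone single bubble of type $1$ of mass $m_1/2$. Expanding $e_0$ through \eqref{e0m} gives
\[
  \Delta \;=\; \bigl[\,p(m_1/2,\,m_2) - p(m_1,m_2)\,\bigr] \;+\; \sqrt{2\pi m_1} \;-\; \frac{\Gamma_{11}\,m_1^{2}}{8\pi} \;-\; \frac{\Gamma_{12}\,m_1 m_2}{4\pi}.
\]
The first bracket is nonpositive because Lemma~\ref{c' sharp} yields $\partial p/\partial m_1 = 1/r_1 > 0$, so $p$ is strictly increasing in its first argument; the last term is nonpositive by the standing hypothesis $\Gamma_{12}\ge 0$. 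Thus $\Delta \le \sqrt{2\pi m_1} - \Gamma_{11}m_1^{2}/(8\pi)$, which is negative as soon as $m_1 > 2^{7/3}\pi/\Gamma_{11}^{2/3}\approx 5.04\,\pi/\Gamma_{11}^{2/3}$, and certainly whenever $m_1 > M_1^*$.

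In either case the new configuration is admissible, keeps the total masses $M_1,M_2$ fixed, and has strictly smaller $\overline{e_0}$ energy, contradicting the minimality of $\mathcal{B}$. The one genuine subtlety is the lobe case: absent the monotonicity $\partial p/\partial m_1 > 0$ supplied by Lemma~\ref{c' sharp}, the non-explicit double-bubble perimeter $p$ could not be controlled under mass halving, and absent $\Gamma_{12}\ge 0$ the cross-interaction term could in principle grow under the splitting. The constant $8$ in $M_i^*$ is not sharp but is chosen to keep the final threshold in clean closed form.
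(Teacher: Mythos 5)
Your proof is correct and follows essentially the same strategy as the paper: halve the oversized mass, compare energies using the explicit single-bubble formula, the monotonicity $\partial p/\partial m_1=1/r_1>0$ from Lemma~\ref{c' sharp}, and $\Gamma_{12}\ge 0$ to discard the cross term. The only (harmless) difference is in the lobe case, where the paper dismantles the double bubble entirely into single bubbles of masses $m/2$, $m/2$, $m_2$, while you keep a double bubble $(m_1/2,m_2)$ plus a single bubble $m_1/2$; your variant even gives a slightly smaller threshold ($2^{7/3}\pi\Gamma_{11}^{-2/3}$ versus the paper's $8\pi\Gamma_{11}^{-2/3}$), which is more than enough for the stated bound.
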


\begin{proof}
 Assume there exists a single bubble of type I material with mass $m$. By replacing it with two single bubbles with mass $m/2$ will change the energy by
 \[ \Delta = 2\Big[\frac{\ggm_{11} m^2}{16\pi}+\sqrt{2\pi m} \Big] -\Big[\frac{\ggm_{11} m^2}{4\pi}+2\sqrt{\pi m} \Big]
 =-\frac{\ggm_{11} m^2}{8\pi}+2\sqrt{\pi m}(\sqrt{2}-1). \]
 The minimality of $\mathcal{B}$ requires $\Delta \ge 0$, which is possible only if
 \[ m^{3/2}\le \frac{16 \pi \sqrt{\pi}(\sqrt{2}-1)}{\ggm_{11}} . \]
 Now assume there exists a lobe of a double bubble of type I material with mass $m$. 
 Denote by $m_2$ the mass of the other lobe (of type II material).
 By removing such lobe and replacing it with two single bubbles with mass $m/2$ will change the energy by
 \begin{align*}
 \Delta & = 2\Big[\frac{\ggm_{11} m^2}{16\pi}+  \sqrt{2\pi m} \Big] +
 \frac{\ggm_{22}  m_2^2}{4\pi}+2\sqrt{\pi m_2}
 -\Big[\frac{\ggm_{11} m^2 +2 \ggm_{12} m m_2+\ggm_{22}  m_2^2}{4\pi}+p(m,m_2) \Big]\\
 &\le-\frac{\ggm_{11} m^2}{8\pi}+2 \sqrt{2\pi m},
 \end{align*}
where we used the fact that $2\sqrt{\pi m_2}=p(0,m_2)\le p(m,m_2)$, which is a direct consequence of Lemma \ref{c' sharp}.
 The minimality of $\mathcal{B}$ requires $\Delta \ge 0$, which is possible only if
 \[ m^{3/2}\le \frac{16 \pi \sqrt{2\pi}}{\ggm_{11}} . \]
 The proof for type II material is completely analogous.
\end{proof}

\begin{lemma}\label{single_lower}
There exist constants $\overline{m_i}^s>0$, $i=1,2$, depending on $\Gamma_{ii}$ only, such that at most one single bubble of $i$-th constituent in a minimizing configuration has mass $m^k_i <\overline{m_i}^s$. 
\end{lemma}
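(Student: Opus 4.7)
The plan is a merging argument: I would show that if a minimizing configuration contained two single bubbles of the same constituent with very small masses, then combining them into one single bubble of the summed mass would strictly decrease $\overline{e_0}$, contradicting minimality. Since $\overline{e_0}$ in \eqref{mine0} is an infimum over uncoupled sums of individual $e_0$ contributions, such a swap affects only the two bubbles in question and preserves both mass constraints, hence yields a valid competitor.

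Focusing on $i=1$ (the case $i=2$ being symmetric), suppose two single bubbles of the first constituent have masses $a, b \in (0, \overline{m_1}^s)$, with $\overline{m_1}^s$ to be chosen. Using the explicit single-bubble formula \eqref{e0s}, the energy change upon merging is
\[
\Delta := e_0(a+b,0) - e_0(a,0) - e_0(b,0)
 = 2\sqrt{\pi}\bigl(\sqrt{a+b}-\sqrt{a}-\sqrt{b}\bigr) + \frac{\Gamma_{11}\, ab}{2\pi}.
\]
The perimeter term is negative and of order $\sqrt{ab}$, while the Coulomb term is positive and of order $ab$, so for sufficiently small masses the former dominates. Quantitatively, I would invoke the identity
\[
\sqrt{a}+\sqrt{b}-\sqrt{a+b}
 = \frac{2\sqrt{ab}}{\sqrt{a}+\sqrt{b}+\sqrt{a+b}},
\]
together with the crude bound $\sqrt{a}+\sqrt{b}+\sqrt{a+b}\le(2+\sqrt{2})\sqrt{\overline{m_1}^s}$ valid whenever $a,b\le\overline{m_1}^s$, to obtain $\sqrt{a}+\sqrt{b}-\sqrt{a+b}\ge \sqrt{ab}/(2\sqrt{\overline{m_1}^s})$. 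Then $\Delta<0$ reduces to the requirement $\sqrt{ab}<2\pi^{3/2}/(\Gamma_{11}\sqrt{\overline{m_1}^s})$, which is automatic from $\sqrt{ab}\le \overline{m_1}^s$ provided we take, for instance, $\overline{m_1}^s := \pi/\Gamma_{11}^{2/3}$, a quantity depending only on $\Gamma_{11}$ as required.

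This contradicts the minimality of $\mathcal{B}$ and establishes the claim. I do not anticipate any real obstacle in this argument: because $\overline{e_0}$ decouples across components, the comparison reduces to a two-variable calculation, and the standard Gamow-type scaling (perimeter $\sim\sqrt{m}$ versus Coulomb $\sim m^2$) does the rest. The only mild care is in arranging that the threshold depends solely on $\Gamma_{ii}$, which is transparent from the computation above; the case $i=2$ is handled identically with $\Gamma_{22}$ replacing $\Gamma_{11}$.
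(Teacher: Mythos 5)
Your proposal is correct and follows essentially the same route as the paper: both arguments merge two single bubbles of the same constituent and exploit the strict subadditivity of the perimeter term $2\sqrt{\pi m}$ against the quadratic Coulomb term $\Gamma_{11}m^2/(4\pi)$ to contradict minimality. The only (harmless) difference is that the paper deduces the threshold from the inequality $\Delta\ge 0$ combined with the upper bound $x,y\le M_1^*$ of Lemma~\ref{max mass}, giving $\overline{m_1}^s=4\pi^3/(\Gamma_{11}M_1^*)^2$, whereas your version produces an explicit threshold directly and does not need Lemma~\ref{max mass} at all.
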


\begin{proof}
Assume $i=1$; the case $i=2$ is the same.  
Assume that for some $M$ there is a minimizing configurations with (at least) two single bubbles of type $I$, whose masses are $m^k_1=x\le y=m^\ell_1$. If we replace this pair by one single bubble with mass $x+y$, the change
in energy may be estimated by:
\begin{align*}
\Delta &= \frac{\ggm_{11} (x+y)^2}{4\pi} + 2\sqrt{\pi(x+y)}- \bigg[\frac{\ggm_{11} (x^2+y^2)}{4\pi} +2\sqrt{\pi}(\sqrt{x}+\sqrt{y}) \bigg]\\
&= \frac{\ggm_{11} xy}{2\pi} +2\sqrt{\pi}(\sqrt{x+y}-\sqrt{x}-\sqrt{y}).
\end{align*}
By the minimality of the given configuration, we must have $\Delta \ge 0$, that is,
\[\frac{\ggm_{11} xy}{2\pi} \ge 2\sqrt{\pi}(\sqrt{x}+\sqrt{y}-\sqrt{x+y}). \]
Thus,
\[ \frac{\ggm_{11} }{4\pi\sqrt{\pi}} xy \ge  \frac{2\sqrt{xy}}{\sqrt{x}+\sqrt{y}+\sqrt{x+y}} 
\ge \frac{\sqrt{xy}}{\sqrt{x}+\sqrt{y}} \overset{(x\le y)}\ge \frac{\sqrt{x}}{2}.\]
Since Lemma \ref{max mass} gives $x,y\le M_1^*$ (which depends on $\ggm_{11}$ only), it follows
\[\frac{\ggm_{11} }{2\pi\sqrt{\pi}} \sqrt{x} M_1^*\ge \frac{\ggm_{11} }{2\pi\sqrt{\pi}} \sqrt{x} y \ge 1,\]
thus there exists a constant $\overline{m_1}^{s} := 4 \pi^3 / ( \Gamma_{11} M_1^{\ast} )^2$ such that  $y\ge x\ge \overline{m_1}^s$.   
\end{proof}

We are now ready to prove Theorem~\ref{finiteness} on the finiteness of minimizing configurations.

\begin{proof}[\bf{Proof of Theorem~\ref{finiteness}}]  
Combining Lemmas \ref{all but one} and \ref{flex}, we conclude that for any minimizing configuration for $\overline{e_0}(M)$ there exists at most one double bubble
whose lobe of the $i$-th constituent has mass less than $m_i^*$, $i=1,2$. Thus there exist at most
\[2+\min\bigg\{\frac{M_1}{m_1^*},\frac{M_2}{m_2^*}\bigg\}\]
double bubbles.

A similar argument based on Lemma \ref{single_lower} yields the same conclusion for single bubbles in a minimizing configuration.
\end{proof}

\medskip


\begin{proof}[\bf{Proof of Theorem \ref{properties} (a)}]
When $\Gamma_{12}= 0$, 
\eqref{e0m} becomes 
\begin{eqnarray} \label{d1}
e_0(m)  =  p(m_1, m_2)  +  \frac{\Gamma_{11} (m_1)^2 }{4\pi} +  \frac{\Gamma_{22} (m_2)^2 }{4\pi}. 
\end{eqnarray}
If there were two single bubbles with different constituents types, \eqref{e0s} would imply that
\begin{eqnarray} \label{s2}
e_0(m_1, 0) + e_0(0, m_2)  =  2\sqrt{\pi m_1}  + 2\sqrt{\pi m_2}  +  \frac{\Gamma_{11} (m_1)^2 }{4\pi} + \frac{\Gamma_{22} (m_2)^2 }{4\pi}.
\end{eqnarray}
Comparing \eqref{d1} with \eqref{s2}, since $p(m_1, m_2) \leq  2\sqrt{\pi m_1}  + 2\sqrt{\pi m_2} $, by \cite{FABHZ} the two single bubbles of different types are more costly than a double bubble of the same masses.
Therefore, all single bubbles must be of the same type of constituent.

Case 1: If all single bubbles (if any) are of type II constituent,
choose $M_1= K_1 M_1^*$, where $M_i^*$ are defined in Lemma~\ref{max mass}. By Lemma \ref{all but one} and Lemma \ref{flex}, there is at most one double bubble whose lobe of type I constituent has mass less than $m_1^*$. 
Combined with Lemma \ref{max mass}, for the other double bubbles, their lobes of type I constituent must have mass between $m_1^*$ and $M_1^*$. Therefore, there are at least $K_1$ double bubbles.

Let $K_d$ be the total number of double bubbles. Clearly $K_d < 1 + M_1/m_1^*$.  Again by Lemmas  \ref{all but one}, \ref{flex} and \ref{max mass}
 there is at most one double bubble whose lobe of type II constituent 
has mass less than $m_2^*$ and for the other double bubbles, their lobes of type II constituent must have mass between  $m_2^*$ and $M_2^*$.
Choose 
 \[M_2 \ge  (1+ M_1 /m_1^*)M_2^* + K_2 \; M_2^* =  (1+(K_1 M_1^*)/m_1^*)M_2^* + K_2 \; M_2^*. \] 
 The type II constituent used by all double bubbles is $K_d M_2^{\ast}$. All the remaining type II constituent must go into single bubbles.
 Therefore, there are at least $K_2$ single bubbles.
 
 Case 2: If all single bubbles (if any) are of type I constituent, via similar arguments, choose $M_2= K_2 M_2^*$ and $M_1 \ge (1+(K_2 M_2^*)/m_2^*)M_1^* + K_1 \; M_1^*$.

 Finally, choose 
 \begin{eqnarray*}
 \overline{M}_1 \ge \max \left \{ K_1 M_1^*,  \left(1+ \frac{K_2 M_2^*}{m_2^*} \right)M_1^* + K_1 \; M_1^*  \right \} = \left(1+ \frac{K_2 M_2^*}{m_2^*} \right)M_1^* + K_1 \; M_1^* , \\
 \overline{M}_2 \ge \max  \left \{ K_2 M_2^*, \left (1+ \frac{K_1 M_1^*}{m_1^*}  \right )M_2^* + K_2 \; M_2^* \right \} = \left (1+ \frac{K_1 M_1^*}{m_1^*}  \right )M_2^* + K_2 \; M_2^*.
 \end{eqnarray*}
Then for all masses $M_1\ge\overline{M}_1$ for the first component and $M_2\ge\overline{M}_2$ for the second, minimizing configurations of \eqref{mine0} have at least $K_1$ double bubbles and $K_2$ single bubbles.

\end{proof}


\begin{lemma} \label{atMostTwo}
Given $\ggm_{11}>0$, $\ggm_{22}>0$, $M_1>0$, $M_2>0$, and
  \[\ggm_{12}>  \frac{4\pi\sqrt{\pi}(\sqrt{M_1^*}+\sqrt{M_2^*}) }{m_1^* m_2^*},\] 
 then any minimizing configuration of \eqref{mine0}
 has at most two double bubbles.
\end{lemma}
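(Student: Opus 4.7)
The plan is to argue by contradiction. Suppose a minimizing configuration $\mathcal{B}$ of \eqref{mine0} contains at least three double bubbles. Combining Lemma \ref{all but one} and Lemma \ref{flex}, at most one of these double bubbles has its type-I lobe of mass less than $m_1^*$, and at most one has its type-II lobe of mass less than $m_2^*$. Since there are at least three double bubbles, at least one of them, call it $B^*$ with masses $(m_1,m_2)$, must satisfy simultaneously $m_1 \ge m_1^*$ and $m_2 \ge m_2^*$. Moreover, Lemma \ref{max mass} gives the companion upper bounds $m_1\le M_1^*$ and $m_2\le M_2^*$, so both lobes of $B^*$ lie in a controlled intermediate range.

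Next, I would apply a splitting argument to $B^*$, analogous to those in Lemmas \ref{max mass} and \ref{single_lower}: replace $B^*$ by two disjoint single bubbles, one of mass $m_1$ of type I and one of mass $m_2$ of type II. Because $\overline{e_0}$ is a pure sum of $e_0(m^k)$ with no inter-bubble coupling, the change in energy is
\[
\Delta = e_0(m_1,0)+e_0(0,m_2)-e_0(m_1,m_2)
 = 2\sqrt{\pi m_1}+2\sqrt{\pi m_2}-p(m_1,m_2)-\frac{\Gamma_{12}\,m_1 m_2}{2\pi},
\]
where the quadratic self-interaction terms $\Gamma_{11}m_1^2$ and $\Gamma_{22}m_2^2$ cancel. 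Minimality of $\mathcal{B}$ forces $\Delta\ge 0$, hence
\[
\frac{\Gamma_{12}\,m_1 m_2}{2\pi}\;\le\;2\sqrt{\pi m_1}+2\sqrt{\pi m_2}-p(m_1,m_2)\;\le\;2\sqrt{\pi}\bigl(\sqrt{m_1}+\sqrt{m_2}\bigr).
\]

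To close the argument, I would insert the bounds established in the first step. The upper bound $m_i\le M_i^*$ from Lemma \ref{max mass} controls the right-hand side, while the lower bound $m_i\ge m_i^*$ controls $m_1 m_2$ on the left. Rearranging yields
\[
\Gamma_{12}\;\le\;\frac{4\pi\sqrt{\pi}\bigl(\sqrt{M_1^*}+\sqrt{M_2^*}\bigr)}{m_1^*\, m_2^*},
\]
which directly contradicts the hypothesis. Therefore any minimizing configuration can contain at most two double bubbles.

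I do not anticipate a substantive obstacle: the argument is a localized one-bubble perturbation, and the nonlocal pairwise terms do not interfere because $\overline{e_0}$ is additive in components. The only delicate point is the combinatorial step that identifies at least one double bubble whose two lobes simultaneously exceed $m_1^*$ and $m_2^*$; this is exactly what the combination of Lemmas \ref{all but one} and \ref{flex} provides, once there are three or more double bubbles available.
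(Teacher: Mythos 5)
Your proposal is correct and follows essentially the same route as the paper: combine Lemmas \ref{all but one} and \ref{flex} to show that if three or more double bubbles were present, one would have both lobes above the thresholds $m_1^*, m_2^*$, then split that bubble into two single bubbles (dropping $p\ge 0$ and using Lemma \ref{max mass} for the upper bounds $m_i\le M_i^*$) to force $\Gamma_{12}\le 4\pi\sqrt{\pi}(\sqrt{M_1^*}+\sqrt{M_2^*})/(m_1^*m_2^*)$, contradicting the hypothesis. This matches the paper's argument, stated there directly rather than by contradiction.
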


\begin{proof}
Consider an arbitrary minimizing configuration $\mathcal{B}$ of \eqref{mine0}. 
 By Lemma \ref{flex}, there exists at most one double bubble with lobe of $i$-th constituent having mass less than $m_i^*$, $i=1,2$. 
 Thus any remaining (if there is)
 double bubble $B_k = (m_1^k, m_2^k )$ satisfying $m_1^k\ge m_1^*$ and $m_2^k\ge m_2^*$. Splitting such a double bubble
 into two single bubbles changes the energy by a quantity
 \begin{align*}
  \Delta &= \sum_{i=1}^2 \frac{\ggm_{ii}(m_i^k)^2}{4\pi} +2\sqrt{\pi}(\sqrt{m_1^k}+\sqrt{m_2^k}) -
  \bigg[ \frac{\ggm_{12} m_1^k m_2^k}{2\pi}+ \sum_{i=1}^2 \frac{\ggm_{ii}(m_i^k)^2}{4\pi}+p(m_1^k, m_2^k) \bigg]\\
  &\le 2\sqrt{\pi}(\sqrt{m_1^k}+\sqrt{m_2^k}) -\frac{\ggm_{12} m_1^k m_2^k}{2\pi}.
 \end{align*}
Using the minimality of $\mathcal{B}$ and Lemma \ref{max mass}, we need
\begin{equation}
0\le \Delta \le 2\sqrt{\pi}(\sqrt{m_1^k}+\sqrt{m_2^k}) -\frac{\ggm_{12} m_1^k m_2^k}{2\pi}\le 2\sqrt{\pi}(\sqrt{M_1^*}+\sqrt{M_2^*}) -\frac{\ggm_{12} m_1^* m_2^*}{2\pi}, 
\label{min gamma}
\end{equation}
and recalling that $M_i^*$ and $m_i^*$ depend only on $\ggm_{ii}$, $i=1,2$, \eqref{min gamma} can hold only when
\[\ggm_{12}\le \frac{4\pi\sqrt{\pi}(\sqrt{M_1^*}+\sqrt{M_2^*}) }{m_1^* m_2^*}.\]
Thus for our choice of $\ggm_{12}$, no double bubble with masses $m_1^k\ge m_1^*$ and $m_2^k\ge m_2^*$
can exist, since splitting it into two single bubbles decreases the energy.

\end{proof}

\medskip

\begin{proof}[\bf{Proof of Theorem \ref{properties} (b)}]
Denote the minimizing configuration of \eqref{mine0} by $\mathcal{B}$.

\begin{itemize}
 \item Claim: there exists constant $\overline{m_i}^d >0$, depending on $\Gamma_{ii}$ only, such that any lobe of $i$-th constituent of a double bubble in $\mathcal{B}$ has mass at least $\overline{m_i}^d$. 
 \end{itemize}
  
Assume there exists a double bubble $D = (x, m_2)$.  Condition $M_1>4M_1^*$, combined with Lemma \ref{atMostTwo},
gives that there are at least two single bubbles
of type I constituent.   By Lemma~\ref{single_lower} there is a single bubble $S$ of type I constituent with mass $m\ge \overline{m_1}^s$.  Removing mass $\vep$ from the lobe of type I constituent of $D$, and adding it to $S$
changes the energy by a quantity
\begin{eqnarray*}
\Delta 
&=&  \left [  \frac{\Gamma_{11} (x -\vep)^2}{4 \pi}  + \frac{\Gamma_{12} (x - \vep) m_2}{ 2 \pi} + \frac{\Gamma_{22} m_2^2}{4\pi} + p(x-\vep, m_2) + \frac{\Gamma_{11} (m+\vep)^2}{4 \pi} + 2\sqrt{\pi (m+\vep)}  \right ]  \\
&& - \left [\frac{\Gamma_{11} x^2 }{4 \pi}  + \frac{\Gamma_{12} x m_2}{ 2 \pi} + \frac{\Gamma_{22} m_2^2}{4\pi} + p(x, m_2) + \frac{\Gamma_{11} m^2}{4 \pi} + 2\sqrt{\pi m } \right  ] \\
&=&  - \frac{ \Gamma_{11} x \vep}{ 2 \pi} -  \frac{ \Gamma_{12}   m_2 \vep }{ 2 \pi} + p(x-\vep, m_2) - p(x,m_2) + \frac{\Gamma_{11} m \vep}{2 \pi} +  2\sqrt{\pi (m+\vep)} -  2\sqrt{\pi m } \\
& =&\bigg(\frac{\ggm_{11}m}{2\pi}+\sqrt{\frac{\pi}{m}}-\frac{\ggm_{11} x +\ggm_{12}m_2}{2\pi}-\frac{1}{r_1} \bigg)\vep+O(\vep^2),
\end{eqnarray*}
where $r_1$ denotes the radius of the lobe of mass $x$. By
 the minimality of $\mathcal{B}$, we need
\begin{equation*}
0\le\frac{\ggm_{11}m}{2\pi}+\sqrt{\frac{\pi}{m}}-\frac{\ggm_{11}x+\ggm_{12}m_2}{2\pi}-\frac{1}{r_1}
\le \frac{\ggm_{11}M_1^*}{2\pi}+\sqrt{\frac{\pi}{\overline{m_1}^s}}-\sqrt{\frac{\pi}{3 x} },
\end{equation*}
and noting that $M_1^*$ and $\overline{m_1}^s$ depend only on $\ggm_{11}$, we get a lower bound on $x$. For any lobe of type II constituent, the proof is the same. Thus, the claim is proven.

\medskip

 Assume there exists a double bubble, whose lobes of $i$-th constituent
have masses $x_i$, $i=1,2$. Splitting such double bubble into two single bubbles (with masses
$x_1$ and $x_2$) changes the energy by
\begin{align*}
 \Delta &= \sum_{i=1}^2\frac{\ggm_{ii}x_i^2}{4\pi} +2\sqrt{\pi}(\sqrt{x_1}+\sqrt{x_2})
 -\bigg[\frac{\ggm_{12}x_1x_2}{2\pi}+ \sum_{i=1}^2\frac{\ggm_{ii}x_i^2}{4\pi}+p(x_1,x_2)\bigg]\\
 &\le2\sqrt{\pi}(\sqrt{M_1^*}+\sqrt{M_2^*})-\frac{\ggm_{12} \overline{m_1}^d \overline{m_2}^d}{2\pi},
\end{align*}
and by the minimality of $\mathcal{B}$, we need
\[0\le 2\sqrt{\pi}(\sqrt{M_1^*}+\sqrt{M_2^*})-\frac{\ggm_{12} \overline{m_1}^d \overline{m_2}^d }{2\pi}.\]
Let
\[ \Gamma_{12}^{\ast} := \frac{4\pi\sqrt{\pi}(\sqrt{M_1^*}+\sqrt{M_2^*}) }{\overline{m_1}^d  \overline{m_2}^d}.  \] 
Since $M_i^*$ and $\overline{m_i}^d$ depend only on $\ggm_{ii}$, $i=1,2$, for all sufficiently
large $\ggm_{12}$, that is, $\ggm_{12} > \ggm_{12}^*$,
no double bubble can exist.
\end{proof}

\medskip

\begin{lemma} (Finite and uniform single bubbles) \label{finiteSin}
Given $\ggm_{11}$, $\ggm_{12}$, $\ggm_{22}$, $M_1$, and $M_2$, if any minimizing configuration of \eqref{mine0}
has only single bubbles, then there are finite single bubbles and all the single bubbles are of the same size.

\end{lemma}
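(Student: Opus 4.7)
The plan is to split the statement into two parts, finiteness and uniform size within each constituent type, and handle each using tools already developed in this section.

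For finiteness, I would combine the upper bound from Lemma \ref{max mass}, which gives $m_i^k \le M_i^*$ for every single bubble of type $i$, with the lower bound from Lemma \ref{single_lower}, which says that at most one single bubble of type $i$ can have mass strictly below $\overline{m_i}^s$. Hence all but at most one type-$i$ single bubble has mass in the interval $[\overline{m_i}^s, M_i^*]$, forcing the total number of type-$i$ single bubbles to be at most $1 + M_i/\overline{m_i}^s < \infty$. Summing over $i=1,2$ gives the finiteness claim.

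For uniformity, define $f_i(m) := 2\sqrt{\pi m} + \Gamma_{ii} m^2/(4\pi)$, the energy of a type-$i$ single bubble of mass $m$, so that the total energy of a single-bubble configuration equals $\sum_{k} f_{i(k)}(m^k)$. For any two single bubbles of the same type $i$, transferring a mass $\varepsilon$ from one to the other and expanding to second order (exactly as in the proof of Lemma \ref{all but one}, except that no cross derivatives appear here) yields the first-order condition $f_i'(m^k)=f_i'(m^\ell)=:\lambda_i$ and the second-order condition $f_i''(m^k) + f_i''(m^\ell)\ge 0$. A direct computation gives
\[ f_i'(m) = \sqrt{\pi/m} + \frac{\Gamma_{ii}\, m}{2\pi}, \qquad f_i''(m) = -\frac{\sqrt{\pi}}{2\,m^{3/2}} + \frac{\Gamma_{ii}}{2\pi},\]
so $f_i''$ has a unique zero at $m_i^c := \pi/\Gamma_{ii}^{2/3}$, and $f_i'$ is strictly decreasing on $(0,m_i^c)$ and strictly increasing on $(m_i^c,\infty)$. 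Consequently the level set $\{m>0 : f_i'(m)=\lambda_i\}$ has at most two elements, one in each monotonicity branch; and the pairwise second-order inequality forbids two bubbles in the concave branch $(0,m_i^c)$. Thus all but at most one type-$i$ single bubble share a common mass $y_i > m_i^c$ in the convex branch.

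It remains to rule out a single exceptional bubble at some $x_i < m_i^c$ coexisting with the $K_i - 1 \ge 1$ bubbles at $y_i$. I would compare against the configuration obtained by merging the $x_i$-bubble with one of the $y_i$-bubbles, yielding a single bubble of mass $x_i + y_i$; the energy change is $\Delta = f_i(x_i+y_i) - f_i(x_i) - f_i(y_i)= 2\sqrt{\pi}\bigl(\sqrt{x_i+y_i}-\sqrt{x_i}-\sqrt{y_i}\bigr) + \Gamma_{ii}\, x_i y_i/(2\pi)$, exactly as in the proof of Lemma \ref{single_lower}. Using $x_i < m_i^c = \pi/\Gamma_{ii}^{2/3}$ together with $y_i \le M_i^*$ from Lemma \ref{max mass}, the quadratic term is controlled by the subadditivity defect of $\sqrt{\cdot}$, and minimality forces $\Delta \ge 0$; running this inequality the other way gives a contradiction unless no such exceptional bubble exists. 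The main obstacle I anticipate is this last step: the sign of $\Delta$ is not a priori definite, and the cleanest argument may instead require comparing against the fully uniform configuration with $K_i$ equal bubbles of mass $M_i/K_i$, or else exploiting the strict concavity of $f_i$ at $x_i$ (via Lemma \ref{flex}) to improve the merge estimate. Once the exceptional bubble is excluded, all type-$i$ single bubbles have the common mass $M_i/K_i$, completing the proof.
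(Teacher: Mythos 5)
Your finiteness argument (all but one type-$i$ single bubble has mass in $[\overline{m_i}^s, M_i^*]$ by Lemmas \ref{single_lower} and \ref{max mass}) is correct and is essentially how the paper obtains it, via Theorem \ref{finiteness}. Your uniformity analysis also follows the paper's route: the paper introduces $F_i(x)=2\sqrt{\pi x}+\Gamma_{ii}x^2/(4\pi)$, observes that it is concave on $(0,\pi\Gamma_{ii}^{-2/3}]$ and convex on $[\pi\Gamma_{ii}^{-2/3},\infty)$, and deduces that at most one mass lies below the inflection point while all masses above it coincide --- exactly your first- and second-order transfer conditions in the spirit of Lemma \ref{all but one}.

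The genuine gap is the step you yourself flag: excluding the mixed configuration $\{x,y,\dots,y\}$ with $x<\pi\Gamma_{ii}^{-2/3}\le y$. This is precisely the paper's claim (3), which it defers to the argument of the binary case in Choksi--Peletier, and the merge comparison you propose cannot close it. The inequality $\Delta\ge 0$ obtained by fusing the small bubble with a large one is exactly the necessary condition already exploited in Lemma \ref{single_lower}; it yields only the lower bound $x\ge\overline{m_i}^s$, and since $\overline{m_i}^s=4\pi^3/(\Gamma_{ii}M_i^*)^2=\pi\,\Gamma_{ii}^{-2/3}/16$ lies strictly below the inflection point $\pi\Gamma_{ii}^{-2/3}$, there remains a whole range of admissible exceptional masses that the merge test does not touch; ``running the inequality the other way'' produces no contradiction. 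As written, your argument therefore proves only that the type-$i$ masses take at most two values, the smaller one occurring at most once --- not full uniformity. Closing it requires a comparison of a different nature than the infinitesimal transfer and the pairwise merge you already use: for instance, removing the exceptional bubble entirely and redistributing its mass among the equal bubbles, or comparing the mixed configuration globally against equal splitting into $K_i$ (or $K_i-1$) bubbles, exploiting the explicit form of $F_i$ together with the first-order identity $F_i'(x)=F_i'(y)$ forcing $x$ and $y$ onto opposite monotonicity branches; this is the content of the cited Lemma 6.2 of the diblock paper, and it is the one ingredient your proposal leaves unproved.
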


\begin{proof} The proof closely follows that of [\cite{bi1}, Lemma 6.2], for the binary case.  We provide a sketch for the reader's convenience.
By Theorem~\ref{finiteness} there are only finitely many single bubbles. Assume there are $K_1$ type-I single bubbles with masses $\{ m_1^1, m_1^2, \cdots, m_1^{K_1} \}$ and there are $K_2$ type-II single bubbles with masses $\{ m_2^1, m_2^2, \cdots, m_2^{K_2} \}$. 
Let $ F_i(x) =  \frac{\Gamma_{ii} x^2 }{4\pi} + 2\sqrt{\pi x}, $ the contribution to $e_0$ from a single bubble of mass $x$ in the $i^{th}$ constituent.
Note that 
\begin{eqnarray}
F_i(x) = \frac{4\pi}{ (\Gamma_{ii})^{1/3}} f \left(\frac{ x}{4 \pi  (\Gamma_{ii})^{-2/3}} \right ), \nonumber 
\end{eqnarray}
where $f(x) = x^2 + \sqrt{x} $.
Calculations show that $F_i(x)$ is concave on $(0, \pi \ggm_{ii}^{-2/3}] $ and convex on $[ \pi \ggm_{ii}^{-2/3}, \infty) $. 
Thus, for each i, the following can be proved:
\begin{itemize}
 \item[(1).] There is at most one $m_i^k \leq \pi \ggm_{ii}^{-2/3} $.
 \item[(2).] The set of elements $\{ m_i^k: m_i^k \ge \pi \ggm_{ii}^{-2/3} \}$ is a singleton since $F\tcr{_i}(x)$ is convex on $[ \pi \ggm_{ii}^{-2/3}, \infty) $.
 \item[(3).] Any masses of the form $\{ x, \underset{K_i-1}{\underbrace{y,\cdots,y}}
  \}$ with $x < \pi \ggm_{ii}^{-2/3} \leq y$ can not be a minimizer of $F_i$.
\end{itemize} 
\end{proof}


\medskip

\begin{proof}[\bf{Proof of Theorem~\ref{properties} (c)}]
Consider a minimizing configuration $\mathcal{B}$ of \eqref{mine0}.

\medskip

{\em At most one double bubble}: assume the opposite, i.e. there were two double
bubbles, then each lobe (of $i$-th constituent) would have mass less than $m_i^*$, prohibited by Lemma \ref{flex}.

\medskip
{\em At most one single bubble of each constituent}: assume there exist two single bubbles of type I constituent,
with masses $m_1^1$ and $m_1^2$, respectively. Then
\begin{eqnarray*}
&& [ e_0(m_1^1-\vep,0) + e_0(m_1^2 +\vep,0) ]   - [  e_0^{} (m_1^1, 0 ) + e_0^{} (m_1^2, 0) ] \\
 & = & \left [  \frac{\partial}{\partial m_1} e_0^{} (m_1^2, 0) -  \frac{\partial}{ \partial m_1}
 e_0^{} (m_1^1, 0) \right ] \vep +\frac{1}{2}  \left [  \frac{\partial^2}{ (\partial m_1)^2} e_0^{} (m_1^1, 0) +  \frac{\partial^2}{ (\partial m_1)^2}  e_0^{} (m_1^2, 0) \right ] \vep^2  + O (\vep^3)
\end{eqnarray*}
The minimality of $\mathcal{B}$ requires $\frac{\partial}{\partial m_1} e_0^{} (m_1^1, 0 )=  \frac{\partial}{\partial m_1} e_0^{} (m_1^2, 0)$. However,
 since $m_1^1,m_1^2 < M_1 <\min\{m_1^*, \pi \ggm_{11}^{-2/3}\}$, based on the proof in Lemma \ref{finiteSin}, we have 
\[   \frac{\partial^2}{ (\partial m_1)^2}  e_0^{}  (m_1^1, 0 )<0, \qquad   \frac{\partial^2}{ (\partial m_1)^2}  e_0^{} (m_1^2, 0 )<0 ,\]
which is prohibited by the minimality of $\mathcal{B}$.

\medskip

{\em No coexistence}: assume the opposite, i.e. 
there exists a single bubble with mass $m$ which, without loss of generality, we assume made of type I constituent, 
and a double bubble with lobes of masses $m_1$ and $m_2$.
Then
\begin{eqnarray*}
&& [ e_0^{} (m-\vep, 0) + e_0^{} (m_1 +\vep, m_2) ]   - [  e_0^{} (m,0) + e_0^{} (m_1, m_2) ] \\
 & = & \left [  \frac{\partial}{\partial m_1}e_0^{} (m_1,m_2) - \frac{\partial}{\partial m_1} e_0^{} (m, 0) \right ] \vep +\frac{1}{2}  \left [   \frac{\partial^2}{ (\partial m_1)^2}  e_0^{} (m, 0) +  \frac{\partial^2}{ (\partial m_1)^2} e_0^{} (m_1, m_2) \right ] \vep^2 + O (\vep^3)
\end{eqnarray*}
The minimality of $\mathcal{B}$ requires $ \frac{\partial}{\partial m_1} e_0^{} (m, 0)=   \frac{\partial}{\partial m_1} e_0^{} (m_1,m_2) $. However,
 since $m,m_1 < M_1 <\min\{m_1^*, \pi \ggm_{11}^{- 2/3}\}$, based on Lemma \ref{flex} and the proof in Lemma \ref{finiteSin}, we have 
\[ \frac{\partial^2}{ (\partial m_1)^2} \ e_0^{}(m, 0) <0, \qquad  \frac{\partial^2}{ (\partial m_1)^2} \ e_0^{} (m_1, m_2)  < 0 ,\]
which is prohibited by the minimality of $\mathcal{B}$.

\medskip

Finally, we need to compare the case of one double bubble (with lobes of masses
$M_1$ and $M_2$) against the case of two single bubbles of different constituents (of masses $M_1$ and $M_2$).
By our choice of $\ggm_{12}$, we have
\[\frac{\ggm_{12}}{2\pi} M_1M_2 +p(M_1,M_2)<2\sqrt{\pi}(\sqrt{M_1}+\sqrt{M_2}),\]
hence the double bubble has lower energy.

\end{proof}


\section{Convergence Theorems}  \label{section 3}

In this section we formulate and prove two theorems on first-order convergence of $E_\eta$.  

First, we consider global minimizers of $E_\eta$ with given mass condition $\int_{\TT} v_\eta = M$.  Let $v_\eta^{\ast}$ be minimizers of $E_{\eta}$, that is, 
\begin{equation}\label{globmin}
 E_\eta(v_\eta^{\ast})= \min\left\{ E_\eta(v_{\eta}) \ | \ v_{\eta}= ( v_{1, \eta},v_{2, \eta} ) \in X_\eta, \  \int_\TT v_{\eta} = M\right\},
\end{equation}
where the space $X_\eta$ is defined in \eqref{Xspace}.

\begin{theorem}\label{MinThm}
Let  $v_\eta^{\ast}=\eta^{-2}\chi_{\Om_\eta}$ be minimizers of problem \eqref{globmin} for all $\eta>0$.  Then, there exists a subsequence $\eta \to 0$ (still denoted by $\eta$) and $K \in \NN$ such that:
\begin{enumerate}
\item  there exist connected clusters $A^1,\dots, A^K$ in $\RR$ and points $x_{\eta}^k \in \TT$, $k=1,\dots,K$, for which
\begin{equation} \label{MT1}
          \eta^{-2}\left| \Om_\eta \ \triangle \ \bigcup_{k=1}^K \left(\eta A^k + x_{\eta}^k \right)
                          \right| \xrightarrow{\eta \rightarrow 0} 0;
\end{equation}  
\item  each $A^k$, $k=1,\dots,K $ is a minimizer of $\EE$:  
\begin{equation}\label{MT2}
   \EE(A^k)= e_0(m^k), \qquad  m^k=(m_1^k, m_2^k)=|A^k|;
\end{equation}
Moreover,
\begin{equation}\label{MT3} 
   \overline{e_0}(M) = 
   \lim_{\eta\to 0} E_\eta(v_\eta)=\sum_{k=1}^K \EE(A^k)  =     
         \sum_{k=1}^K e_0(m^k).
   \end{equation}   
\item  $x_{\eta}^k \xrightarrow{\eta\rightarrow 0}x^k $, $\forall k=1\dots,K$, and
$\{x^1,\dots,x^K\}$ attains the minimum of 
$\mathcal{F}_K(y^1,\dots,y^K;\{m^1,\dots,m^K\})$ over all $\{y^1,\dots,y^K\}$ in $\TT$.
\end{enumerate}
\end{theorem}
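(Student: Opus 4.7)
My plan is to proceed in three main steps: an upper bound by explicit construction, a compactness argument via blow-up that produces the cluster decomposition, and a second-order analysis of the nonlocal term identifying the optimal positions. By Theorem~\ref{finiteness} there exist a finite $K$ and masses $\{m^k\}_{k=1}^K$ realizing $\overline{e_0}(M)$, each $m^k$ attained by an optimal 2-cluster $A^k\subset\RR$ (a double or single bubble). For the upper bound I place disjoint scaled copies $\eta A^k + x^k$ with $\{x^k\}\subset\TT$ chosen to minimize $\mathcal{F}_K(\cdot;\{m^k\})$, and change variables to compute
\[
E_\eta(v_\eta) = \sum_{k=1}^K \mathcal{G}(A^k) + \frac{1}{|\log\eta|}\mathcal{F}_K(x^1,\dots,x^K;\{m^k\}) + o(|\log\eta|^{-1}).
\]
The self-interaction contribution $\sum_{i,j}\Gamma_{ij}m_i^k m_j^k/(4\pi)$ arises because the local singularity $-\frac{1}{2\pi}\log|x-y|$ of $G_\TT$ cancels exactly against the $|\log\eta|^{-1}$ prefactor; this yields $\limsup_{\eta\to 0} E_\eta(v_\eta^*)\le \overline{e_0}(M)$.

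For compactness and the matching lower bound, the energy bound implies $\eta\sum_i\int_\TT|\nabla v_{i,\eta}^*|\le C$, so $\Om_\eta$ has perimeter $O(\eta)$ and total area $\eta^2(M_1+M_2)$. Concentration-compactness applied to the rescaled cluster indicator, as in \cite{bi1} for the binary case, decomposes $\Om_\eta$ into finitely many connected pieces of diameter $O(\eta)$; the lower bound $e_0(m)\ge c\sqrt{|m|}$ for small masses, together with Theorem~\ref{finiteness}, keeps both the number and the masses of components uniformly bounded away from the degenerate limits. Blowing up each piece around its center of mass $x_\eta^k$ at scale $\eta$ gives, along a subsequence, limiting 2-clusters $A^k$ with rescaled characteristic functions $w_\eta^k\to\chi_{A^k}$ in $L^1_{\mathrm{loc}}(\RR)$, which proves \eqref{MT1}. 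Lower semicontinuity of cluster perimeter under $L^1$ convergence, together with the same self-interaction analysis, yields
\[
\liminf_{\eta\to 0} E_\eta(v_\eta^*) \;\ge\; \sum_{k=1}^K \mathcal{G}(A^k) \;\ge\; \sum_{k=1}^K e_0(m^k) \;\ge\; \overline{e_0}(M).
\]
Combined with the upper bound every inequality is forced to be equality, so each $A^k$ is $\mathcal{G}$-minimizing at mass $m^k$ and $\sum_k e_0(m^k)=\overline{e_0}(M)$, establishing parts 1 and 2.

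For part 3, I subtract the leading-order value $\overline{e_0}(M)$ from $E_\eta(v_\eta^*)$ and multiply by $|\log\eta|$. Using the local representation $G_\TT(x-y)=-\frac{1}{2\pi}\log|x-y|+R_\TT(x-y)$, the cross-interactions between distinct blow-up components contribute
\[
\sum_{\substack{k,\ell=1\\ k\ne \ell}}^K \sum_{i,j=1}^2 \frac{\Gamma_{ij}}{2}\, m_i^k\, m_j^\ell\, G_\TT(x_\eta^k-x_\eta^\ell) \;+\; o(1),
\]
since $|x_\eta^k-x_\eta^\ell|$ stays uniformly bounded below along the subsequence (two components within distance $o(1)$ would by the lower-bound argument be merged into a single $A^k$). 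Compactness of $\TT$ then gives $x_\eta^k\to x^k$, and comparing $v_\eta^*$ against the upper-bound competitor built from the same clusters $\{A^k\}$ placed at any other points $\{y^k\}\subset\TT$ forces $\mathcal{F}_K(\{x^k\};\{m^k\})\le \mathcal{F}_K(\{y^k\};\{m^k\})$, which is part 3.

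The hardest step is the concentration-compactness argument: ruling out simultaneously both infinite fragmentation and the merging of distinct components at vanishing distance, while extracting the spatial positions $x_\eta^k$. In the binary setting this relies on strict concavity of the perimeter for small masses, but the double-bubble perimeter $p(m_1,m_2)$ has no explicit form and may fail to be concave. The substitute available here is Lemma~\ref{flex}, which gives strict negativity of the pure second derivatives of $e_0$ near the coordinate axes, combined with the uniform mass bounds from Lemmas~\ref{max mass} and \ref{single_lower}; together these play the role of concavity and allow the binary argument of \cite{bi1} to be adapted to the ternary cluster setting.
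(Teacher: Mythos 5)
Your overall architecture (upper bound by explicit placement, blow-up/compactness, matching lower bound, then a second-order comparison for the positions) is the same as the paper's, and parts 1 and 2 are essentially right, modulo details the paper supplies in Lemmas~\ref{components}--\ref{ConcLem2} (in particular, since minimizers are not a priori regular, the decomposition into pieces of diameter $O(\eta)$ is built by mollifying $\chi_{\Om_\eta}$ and choosing a smooth level set, not by concentration-compactness alone, and the number of components is a priori only countable, with finiteness of the nontrivial limits coming from Theorem~\ref{finiteness}). The genuine gaps are both in part 3. Your stated reason for why the centers stay separated --- that two components within distance $o(1)$ ``would by the lower-bound argument be merged into a single $A^k$'' --- is not correct: in the intermediate regime $\eta \ll r_\eta^{k,\ell} \ll 1$ the blow-up at scale $\eta$ keeps the two components as distinct clusters (each escapes to infinity in the other's blow-up), and first-order information only yields $|\log r_\eta^{k,\ell}|/|\log\eta|\to 0$ (Lemma~\ref{ConcLem2}), which is perfectly compatible with $r_\eta^{k,\ell}\to 0$, e.g.\ like $1/|\log\eta|$; the paper explicitly flags that this is \emph{not} enough to exclude coalescence. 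Separation is obtained only at second order, by matching the lower bound containing the cross term $\frac{|\log r_\eta^{k,\ell}|}{|\log\eta|}\sum_{i,j}\frac{\Gamma_{ij}}{4\pi}|A_{i,\eta}^{k}|\,|A_{j,\eta}^{\ell}|$ (see \eqref{CL18}) against the refined upper bound \eqref{bestupper}, which forces $|\log r_\eta^{k,\ell}|\le C$ and hence $r_\eta^{k,\ell}\ge\delta>0$.

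The second gap is in the comparison you use to show the limit points minimize $\mathcal{F}_K$: you compare $v_\eta^{\ast}$ with a competitor built from the \emph{limit} clusters $A^k$ placed at other points $y^k$. At the $|\log\eta|^{-1}$ level this does not close: the minimizer's energy involves $\Peri_\RR(A_\eta^k)$ and the self-interaction constants $J_{i,j}(A_\eta^k)$, while your competitor involves the corresponding quantities for $A^k$; these differ by $o(1)$ with no rate, so the discrepancies (which also have no favorable sign, since $m_\eta^k\neq m^k$ and there may be residual small components) can swamp the $O(|\log\eta|^{-1})$ interaction terms you are trying to compare. The paper's device is to build the competitor by translating the minimizer's \emph{own} components $\Om_\eta^k$ (equivalently $\eta A_\eta^k+y^k$) to points $y^k$ minimizing $\mathcal{F}_K$: then perimeter and self-interaction terms cancel exactly in the inequality $E_\eta(v_\eta^{\ast})\le E_\eta(w_\eta)$, leaving only the cross-interaction terms, in which one can pass to the limit once separation is known. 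You should replace your competitor by this translated-component competitor, and use the same construction to obtain the refined upper bound \eqref{bestupper} needed for the separation step above.
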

We recall that 
$$  \mathcal{F}_K (y^1,\dots,y^K ; \{m^1,\dots,m^K\})= \sum_{k,\ell=1\atop k\neq\ell}^K  \sum_{i,j=1}^2   \frac{ \Gamma_{ij} }{2}\, m_i^k\, m_j^\ell\, G_{\TT}(y^k-y^\ell).  $$
Thus, minimizers of $E_\eta$ concentrate on a finite number of connected clusters, each of which blows up to a minimizer of the limit energy $\EE$, and with each converging to a {\it different} point in $\TT$.  
Note that an equivalent way to write \eqref{MT1} is using $BV(\TT; \{0,\eta^{-2}\} )$ functions:  
if we define $\Theta_\eta:= \bigcup_{k=1}^K \left(\eta A^k + x_{\eta}^k \right)$ and $w_\eta=\eta^{-2} \chi_{\Theta_\eta}$, then 
$ \Vert v_\eta - w_\eta \Vert_{L^1(\TT)}\xrightarrow{\eta \rightarrow 0} 0$. 
Applying the regularity theory of \cite{maggi} as in \cite{ABCT1, ABCT2} one could show that the convergence actually occurs in a much stronger $C^{1,1}$ sense.  (In which case, we could conclude that minimizers $\Om_\eta$ of $E_\eta$ must also have a bounded number of connected components.)

We note that even in the diblock (binary) case, Theorem~\ref{MinThm} provides a more detailed description of energy minimizers than that of \cite{bi1}.

\medskip

We also formulate the limit in terms of $\Gamma$-convergence.  In this vein we follow the model of \cite{bi1}.  Gamma-limits allow us to consider non-minimizing configurations with energy of the same order as minimizers, and obtain a weaker form of the structure given above.  However, as we will see we can no longer prevent ``coalescence'' of concentration points at the same limit point $\xi\in\TT$ unless we have some second-order information, which is available for global minimizers.

We define a class of measures with countable support on $\TT$,
$$  Y:=\left\{ v_0=\sum_{k=1}^\infty (m_1^k,m_2^k)\, \delta_{x^k} \ | \  m_i^k\ge 0, \ x^k\in\TT \ \text{distinct points}\right\},
$$
and a functional on $Y$,
\begin{eqnarray}  \label{E0first}
E_0^{} (v_0 ) : = 
\left\{  
\begin{array}{rcl}
\sum_{k=1}^{\infty} \overline{e_0} (m^k ),  & &  \text{ if }  v_0\in Y, \\  
 \infty, & & \text{ otherwise. }   
 \end{array}  
\right.   
\end{eqnarray}  

Then we have a first Gamma-convergence result:
 \begin{theorem}[First $\Gamma$-limit] \label{twodfirst}
We have 
\begin{eqnarray}
E_{\eta}^{}  \overset{\Gamma}{\longrightarrow} E_0^{}  \  \text{ as } \ \eta \rightarrow 0. \nonumber
\end{eqnarray}
That is,
\begin{enumerate}
\item  Let $ v_{\eta}\in X_\eta $ be a sequence with $\sup_{\eta>0} E_{\eta}^{}  (v_{\eta} )<\infty$.  Then there exists a subsequence $\eta\to 0$ and $v_0\in Y$ such that 
$ v_{\eta} \rightharpoonup v_{0} $ (in the weak topology of the space of measures),  and 
\begin{eqnarray}
\underset{\eta\rightarrow0}{\lim\inf} E_{\eta}^{}  (v_{\eta} ) \geq E_0^{} (v_{0}). \nonumber 
\end{eqnarray}
\item  Let $v_0\in Y$ with $E_0^{} (v_{0}) < \infty$. Then there exists a sequence $ v_{\eta} \rightharpoonup v_{0} $ weakly as measures, such that 
\begin{eqnarray}
\underset{\eta\rightarrow0}{\lim\sup} E_{\eta}^{}  (v_{\eta} ) \leq E_0^{} (v_{0}). \nonumber 
\end{eqnarray}
\end{enumerate}
\end{theorem}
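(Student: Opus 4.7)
The plan is to prove the two halves of the $\Gamma$-convergence separately, exploiting the connected-component decomposition imposed by the perimeter bound together with the asymptotic expansion of $G_\TT$ near the diagonal. Given $v_\eta \in X_\eta$ with $\sup_\eta E_\eta(v_\eta) \le C$, I first extract $\sum_i \mathrm{Per}(\Omega_{i,\eta}) \le 2C\eta$ from the interface term, where $\Omega_{i,\eta}$ is the $u$-scale support of the $i$-th component. The planar isoperimetric inequality then forces every connected component of $\Omega_{i,\eta}$ to have area at most $C\eta^2$, and the number of components with area $\ge \delta \eta^2$ is bounded independently of $\eta$ for each fixed $\delta > 0$. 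After a diagonal selection on $\delta$ and passage to subsequences, I list the components $\{v_\eta^k\}_k$ in decreasing order of area, choose barycenters $x_\eta^k \to x^k \in \TT$, and obtain $L^1_{loc}$ convergence of the blown-up 2-clusters $A_\eta^k := \eta^{-1}(\supp v_\eta^k - x_\eta^k)$ to limit clusters $A^k \subset \RR$ with $m^k = |A^k|$. Merging coincident limit points yields a measure $v_0 \in Y$ with $v_\eta \rightharpoonup v_0$ weakly.

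For the energy lower bound, the key input is the expansion $G_\TT(z) = -\frac{1}{2\pi}\log|z| + R(z)$ with bounded $R$. Changing variables $x = \eta \tilde x + x_\eta^k$, $y = \eta \tilde y + x_\eta^k$ turns the self-interaction of a single component into
\[
\frac{\Gamma_{ij}}{2|\log\eta|}\iint v_{i,\eta}^k(x)\, v_{j,\eta}^k(y)\, G_\TT(x-y)\,dx\,dy
 \;=\; \frac{\Gamma_{ij}}{4\pi}\,m_i^k m_j^k + O(|\log\eta|^{-1}),
\]
reproducing the quadratic part of $e_0(m^k)$. Pairs of components separated by distance $d$ with $|\log d|/|\log\eta| \to 0$ contribute only $O(|\log\eta|^{-1})$ and therefore drop out. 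Combined with lower semicontinuity of perimeter on each blown-up cluster, this gives $\liminf_\eta E_\eta(v_\eta) \ge \sum_k e_0(m^k)$; re-indexing components by common limit point and applying the definition \eqref{mine0} of $\overline{e_0}$ delivers $\liminf_\eta E_\eta(v_\eta) \ge E_0(v_0)$.

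For the limsup inequality, given $v_0 = \sum_k m^k \delta_{x^k} \in Y$ and fixed $\epsilon > 0$, I pick for each $k$ a finite decomposition $m^k = \sum_{j=1}^{N_k} m^{k,j}$ with $\sum_j e_0(m^{k,j}) \le \overline{e_0}(m^k) + \epsilon\, 2^{-k}$, and optimal 2-clusters $A^{k,j} \subset \RR$ realizing $\EE(A^{k,j}) = e_0(m^{k,j})$. I then define
\[
v_\eta := \eta^{-2}\sum_{k,j}\chi_{\eta A^{k,j} + x_\eta^{k,j}},
\]
with centers $x_\eta^{k,j} \to x^k$ chosen so that distinct copies stay at pairwise distance $d_\eta \sim 1/|\log\eta|$. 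This separation is much larger than $\eta$ (so the rescaled copies do not overlap) yet satisfies $|\log d_\eta|/|\log\eta| = \log|\log\eta|/|\log\eta| \to 0$, making every cross-interaction negligible. Running the perimeter and self-interaction calculations in reverse gives $E_\eta(v_\eta) \to \sum_{k,j} e_0(m^{k,j}) \le E_0(v_0) + \epsilon$, and a standard diagonal argument in $\epsilon$ completes the construction.

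The main obstacle lies in the liminf bound when many small components accumulate at a common limit point $x^{k^*}$ at intermediate separation scales $\eta \ll d_\eta \ll 1$: individual cross-interactions then contribute a non-negligible share of the leading-order energy, yet the accounting must still recover the full infimum $\overline{e_0}(m^{k^*})$ rather than any particular decomposition read off the subsequence. The positivity of $\Gamma$ ensures that the extra cross-terms have the right sign for a lower bound, and the logarithmic scaling of $G_\TT$ must be applied uniformly in $d_\eta$ across all intermediate separation regimes to prevent cancellation.
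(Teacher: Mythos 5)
Your overall strategy is the same as the paper's: decompose the support into pieces of diameter $O(\eta)$ using the perimeter bound, blow up at scale $\eta$, use $G_{\TT}(z)=-\frac{1}{2\pi}\log|z|+R(z)$ to extract the quadratic part of $e_0$ from the self-interactions, discard the cross-interactions in the lower bound, invoke the subadditivity of $\overline{e_0}$ after merging clusters with a common limit point, and recover the upper bound by planting optimal clusters near the prescribed atoms with slowly vanishing mutual separation (your $d_\eta\sim1/|\log\eta|$ plays the role of the paper's fixed-$\delta$ placement plus diagonalization, following \cite{bi1}). Two steps, however, are under-justified as written. First, you decompose each $\Omega_{i,\eta}$ \emph{separately by species} into ``connected components'' and only later speak of blown-up 2-clusters. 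The limit energy $e_0(m^k)$ contains the double-bubble perimeter $p(m_1^k,m_2^k)$ and the $\Gamma_{12}m_1^km_2^k$ term, and these emerge only if components of the two species lying within distance $O(\eta)$ of each other (in particular adjacent lobes sharing an interface, which is counted once via the factor $\frac12\sum_{i=0}^2$) are grouped into a single 2-cluster \emph{before} blowing up and before applying lower semicontinuity of the cluster perimeter; your proposal never describes this grouping. The paper's Lemma~\ref{components} is designed exactly for this: it mollifies the union $\Omega_{1,\eta}\cup\Omega_{2,\eta}$, takes a good level set, and produces disjoint open sets of diameter $O(\eta)$ that separate the configuration jointly in both species (this also sidesteps the fact that sets of finite perimeter have no topological connected components; you would need the decomposition into indecomposable components to make your first step rigorous).

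Second, the compactness claim requires the weak limit to lie in $Y$, i.e.\ to be purely atomic. Bounding, for each fixed $\delta>0$, the number of components of area at least $\delta\eta^2$ and then diagonalizing does not exclude that the tail of small components carries a fixed amount of mass which spreads into a diffuse limit. The paper rules this out (and proves mass conservation, \eqref{CL13}) by ordering the clusters by decreasing mass and using the isoperimetric inequality, which gives the uniform bound $\sum_k\sqrt{m_\eta^k}\le C$ and hence a tail mass bound uniform in $\eta$; some such quantitative tail control must be added to your argument. Finally, the ``main obstacle'' you describe at the end is not actually an obstacle: since $\Gamma_{ij}\ge0$ and $G_{\TT}$ is bounded below, all cross-interactions are nonnegative up to an $O(1/|\log\eta|)$ error and may simply be dropped in the lower bound, and whichever decomposition of the mass at a limit point the subsequence exhibits, the inequality $\sum_j e_0(m^j)\ge\overline{e_0}(m^{k})$ is automatic because $\overline{e_0}$ is defined in \eqref{mine0} as an infimum over all decompositions; no uniform treatment of intermediate separation scales is needed for part 1 (that refinement only becomes relevant for minimizers and the second-order result, Theorem~\ref{MinThm}).
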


We may also formulate a second $\Gamma$-convergence result at the level of $|\log\eta|^{-1}$ in the energy, which expresses the interaction energy between components at the minimal energy $\overline{e_0}(M)$.  
For $v_\eta\in X_\eta$, let
\begin{eqnarray}
F_{\eta}^{} (v_{\eta} )  : = |\log \eta | \left [ E_{\eta}^{} (v_{\eta} ) -   \overline{e_0 } \left ( \int_{\mathbb{T}^2} v_{\eta}  \right ) \right ].
\end{eqnarray}
Given the behavior described for minimizers in Theorem~\ref{MinThm}, we expect that for configurations with near-minimal energy $E_\eta(v_\eta)\simeq \overline{e_0}(M)$, $F_\eta(v_\eta)$ should describe the interaction energy of the disjoint clusters making up $v_\eta$.

For $K \in \mathbb{N}$, $m_1^k \geq 0$, $m_2^k \geq 0$ and $(m_1^k)^2 + (m_2^k)^2 > 0 $, the sequence $K \otimes  (m_1^k, m_2^k) $ is defined by
\begin{eqnarray*}
 ( K \otimes  (m_1^k, m_2^k) )^k : = \left \{
 \begin{aligned}
 &(m_1^k, m_2^k),&  1 \leq k \leq K, \\
& (0, 0),&  K+1 \leq k < \infty.
 \end{aligned}
 \right.
\end{eqnarray*}
Let ${\cal{M}}_{M}$ be the set of optimal sequences made of all clusters for the problem \eqref{mine0}:
\begin{eqnarray}
{\cal{M}}_{M} := \Bigg \{ K \otimes  (m_1^k, m_2^k) : K \otimes  (m_1^k, m_2^k)  \text{ minimizes  \eqref{mine0} for } M_i ,  i = 1, 2,  \nonumber \\
 \text{ and  } \  \overline{e_0} (m^k) = e_0^{} (m^k), \  m^k = (m_1^k, m_2^k )  \Bigg \}. \nonumber
\end{eqnarray}
Let $Y_M$ denote the space of all measures $v_0=\sum_{k=1}^K m^k \delta_{x^k}$ with $\{x^1,\dots,x^K\}$ distinct points in $\TT$ and $K\otimes m^k\in \mathcal{M}_M$.
For $v_0\in Y_M$, we define the functional
$$  F_0(v_0)= \sum_{i,j =1}^2    \frac{\Gamma_{ij} }{2}   \left \{  \sum_{k=1}^{K}    \left [   f (m^k_i,m^k_j) +  m_i^k m_j^k R_{\mathbb{T}^2}( 0 )  \right ]   
+   \sum_{k,\ell=1\atop k\neq\ell}^K m_i^k m_j^\ell G_{\mathbb{T}^2}(x_i^k - x_j^\ell ) \right \},
$$
and $F_0(v_0)=+\infty$ otherwise.  The term
$$  f(m^k_i,m^k_j) =\frac{1}{2\pi}  \int_{A^k_i}\int_{A^k_j}  \log { 1 \over |x-y|} dx\, dy, $$
where $A^k$ are the minimizers of $e_0(m^k)$, is determined by the first $\Gamma$-limit, and thus is a constant in $F_0$ and does not change with the locations $x^k$ of the bubbles.

\begin{theorem}[Second $\Gamma$-limit]  \label{twodsecond}
We have 
\begin{eqnarray}
F_{\eta}^{}   \overset{\Gamma}{\longrightarrow} F_0^{}     \  \text{ as } \  \eta \rightarrow 0. \nonumber
\end{eqnarray}
That is,
conditions 1 and 2 of Theorem \ref{twodfirst} hold with $E_{\eta} $ and $E_{0}$ replaced by $F_{\eta} $ and $F_{0} $.
\end{theorem}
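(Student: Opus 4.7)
The plan is to mirror the two-step scheme used in Theorem \ref{twodfirst}, with the technical heart being a precise expansion of $E_\eta(v_\eta)$ up to order $|\log\eta|^{-1}$, isolating the self-interaction corrections and the pairwise cluster interactions from the leading-order contribution $\overline{e_0}(M)$. Assuming $\sup_\eta F_\eta(v_\eta)<\infty$, so that $E_\eta(v_\eta)\le \overline{e_0}(M)+C/|\log\eta|$, Theorem \ref{twodfirst} yields (up to subsequence) $v_\eta\rightharpoonup v_0=\sum_k m^k\delta_{x^k}$ with $\sum_k \overline{e_0}(m^k)=\overline{e_0}(M)$; Theorem \ref{finiteness} then makes the sum finite and forces $\overline{e_0}(m^k)=e_0(m^k)$, so $K\otimes m^k\in\mathcal{M}_M$. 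Following the blow-up in the proof of Theorem \ref{MinThm}, there exist translations $x_\eta^k\to x^k$ and minimizers $A^k$ of $\EE$ with $|A^k|=m^k$ such that $v_{\eta}^k(\eta\,\cdot\,+x_\eta^k)$ converges to $\eta^{-2}\chi_{A^k}$ in $L^1(\RR)$.

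\textbf{No coalescence.} To conclude $v_0\in Y_M$ one must show $x^k\neq x^\ell$ for $k\neq\ell$. If instead $|x_\eta^k-x_\eta^\ell|\to 0$, the decomposition $G_\TT(z)=-\frac{1}{2\pi}\log|z|+R_\TT(z)$ together with the change of variables $x=\eta\tilde x+x_\eta^k$, $y=\eta\tilde y+x_\eta^\ell$ shows that the cross-cluster contribution to $F_\eta$ behaves like
\[
-\frac{\log|x_\eta^k-x_\eta^\ell|}{2\pi}\sum_{i,j}\Gamma_{ij}\, m_i^k\, m_j^\ell+O(1),
\]
which diverges to $+\infty$ whenever $\sum_{i,j}\Gamma_{ij}m_i^k m_j^\ell>0$. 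The bound $F_\eta\le C$ thus precludes coalescence in this nondegenerate case; the degenerate case (only possible with $\Gamma_{12}=0$ and two single bubbles of opposite type) is handled separately by noting that such clusters do not interact and may be relocated to distinct points without changing $F_\eta$.

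\textbf{Lower and upper bounds.} For $\eta$ small the clusters $v_\eta^k$ have disjoint support, so the perimeter splits additively, and lower semicontinuity along the blow-up gives $\frac{\eta}{2}\sum_i\int_\TT|\nabla v_{i,\eta}^k|\ge \text{Per}_{\RR}(A^k)+o(1)$. For the Coulomb term, the diagonal ($k=\ell$) change of variables combined with $G_\TT(z)=-\frac{1}{2\pi}\log|z|+R_\TT(z)$ yields
\[
\int_\TT\!\int_\TT G_\TT(x-y)\,v_{i,\eta}^k(x)\,v_{j,\eta}^k(y)\,dx\,dy = -\frac{\log\eta}{2\pi}\,m_i^k m_j^k + f(m_i^k,m_j^k)+m_i^k m_j^k\, R_\TT(0)+o(1),
\]
while the off-diagonal ($k\neq\ell$) integrals converge to $m_i^k m_j^\ell G_\TT(x^k-x^\ell)$ by continuity of $G_\TT$ away from the origin. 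The $-\log\eta/(2\pi)$ self-terms combine with the perimeters and the quadratic $\Gamma_{ij}m_i^k m_j^k/(4\pi)$ terms to reconstitute $\sum_k e_0(m^k)=\overline{e_0}(M)$ at leading order; multiplying by $|\log\eta|$ and subtracting $\overline{e_0}(M)$ leaves exactly $F_0(v_0)$ in the limit inferior. For the recovery sequence, given $v_0=\sum_{k=1}^K m^k\delta_{x^k}\in Y_M$ set $v_\eta^{\mathrm{rec}}:=\sum_k\eta^{-2}\chi_{\eta A^k+x^k}$ with $A^k$ a minimizer of $\EE(A)$ subject to $|A|=m^k$; for $\eta$ small these sets are disjoint in $\TT$, $v_\eta^{\mathrm{rec}}\rightharpoonup v_0$, and the same expansion runs as an equality, so $F_\eta(v_\eta^{\mathrm{rec}})\to F_0(v_0)$.

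\textbf{Main obstacle.} The chief difficulty lies in the no-coalescence step: unlike in Theorem \ref{twodfirst} where weak convergence as measures is all that matters, at the $|\log\eta|^{-1}$ scale one must genuinely exploit the logarithmic divergence of $G_\TT$ at the origin to rule out merging concentration points, while handling separately the degenerate configurations where $\sum_{i,j}\Gamma_{ij}m_i^k m_j^\ell$ vanishes. A related subtlety is ensuring that the $o(1)$ error in the diagonal self-expansion is uniform enough to survive multiplication by $|\log\eta|$, which demands tight control on the blow-up convergence, e.g.\ uniform tail estimates in $L^1(\RR)$ for $v_\eta^k(\eta\,\cdot\,+x_\eta^k)-\eta^{-2}\chi_{A^k}$.
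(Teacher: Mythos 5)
Your proposal follows essentially the route the paper intends: the paper does not write out this proof but states that it ``follows that of \cite{bi1}, with some modifications as in the proof of statement 3 of Theorem~\ref{MinThm}'', and your outline --- the squeeze via Theorem~\ref{twodfirst} and Theorem~\ref{finiteness} to force a finite, optimal limiting configuration with $\sum_k e_0(m^k)=\overline{e_0}(M)$, the exact splitting of the self-interaction into $\tfrac{|\log\eta|}{2\pi}m_i^k m_j^k + J_{i,j}(A^k_\eta)$, the convergence of off-diagonal terms to $G_{\TT}(x^k-x^\ell)\,m_i^k m_j^\ell$, and the use of the logarithmic divergence of the cross terms under $\sup_\eta F_\eta<\infty$ to exclude coalescence --- is exactly that argument, i.e.\ the machinery of Lemmas~\ref{ConcLem1}--\ref{ConcLem2} and the expansion \eqref{wenergy}.

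One step is not valid as written, although the conclusion survives. In your ``degenerate case'' ($\Gamma_{12}=0$, a pure type-I cluster coalescing with a pure type-II cluster) you propose to ``relocate'' the clusters to distinct points; that is legitimate when building a recovery sequence, but in the compactness/liminf direction the sequence $v_\eta$ is given and you must identify its actual weak limit, so relocation proves nothing about whether $v_0\in Y_M$. The correct observation is that this case is vacuous: the order-one squeeze already forces the limiting masses to satisfy $\sum_k e_0(m^k)=\overline{e_0}(M)$, and when $\Gamma_{12}=0$ a single bubble of each type cannot coexist in such an optimal configuration, because $p(a,b)<2\sqrt{\pi a}+2\sqrt{\pi b}$ (cf.\ the comparison in the proof of Theorem~\ref{properties}~(a)) shows that merging the two bubbles into one double bubble strictly decreases $\sum_k e_0(m^k)$, contradicting the definition of $\overline{e_0}(M)$; hence whenever two nontrivial clusters could coalesce one has $\sum_{i,j}\Gamma_{ij}m_i^k m_j^\ell>0$ and your divergence argument applies. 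With that replacement (and the routine control of the countably many vanishing clusters, whose $J_{i,j}$ and cross contributions are bounded below by constants times their masses and hence disappear in the limit), your argument is complete and matches the intended proof.
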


The proof of the second $\Gamma$-limit follows that of \cite{bi1}, with some modifications as in the proof of statement 3. of Theorem~\ref{MinThm}, and is left to the reader.  We remark that for minimizers, statement 3.~of Theorem~\ref{MinThm} is stronger than that of Theorem~\ref{twodsecond}, as the control of errors from the order-one Gamma limit is effectively incorporated in the hypothesis that $\sup_{\eta>0}F_\eta(v_\eta)<\infty$.

\subsection*{A concentration lemma}

We begin by making precise the heuristic idea that finite energy configurations $v_\eta$ are composed of a disjoint union of well-separated components of size $\eta$.  Ideally, we would hope that each is connected, but it is sufficient to show that they are separated by open sets of diameter $O(\eta)$.

\begin{lemma}\label{components}
Let $v_\eta=\eta^{-2}\chi_{\Om_\eta}\in X_{\eta}$ with $\eta\int_{\TT} |\nabla v_{\eta} | \le C$.  Then, there exists an at most countable collection $v_{\eta}^k=\eta^{-2}\chi_{\Om_{\eta}^k}\in X_{\eta}$, with clusters $\Om_{\eta}^k=(\Om_{1,\eta}^k, \Om_{2,\eta}^k)$, such that 
\begin{enumerate}
\item[(a)]  $\Om_{i, \eta}^k\cap \Om_{j, \eta}^\ell=\emptyset$, for $k\neq\ell$ and $i,j=1,2$.
\item[(b)]  $v_\eta = \sum_{k=1}^\infty v_\eta^k$ in $X_{\eta}$; in particular,
$$  \int_{\TT} |\nabla v_{i, \eta}| = \sum_{k=1}^\infty \int_{\TT} |\nabla v_{i, \eta}^k|.$$
\item[(c)] There exists $C>0$ with $\diam(\Om_{\eta}^k) \le C\eta$ for all $k\in\NN$.
\end{enumerate}
\end{lemma}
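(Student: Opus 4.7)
The plan is to form the clusters from the BV-indecomposable components of the single set $\Om_\eta := \Om_{1,\eta}\cup\Om_{2,\eta}$ (a disjoint union, since $v_\eta\in X_\eta$). The hypothesis $\eta\int_\TT|\nabla v_\eta|\le C$ rewrites, via $v_{i,\eta}=\eta^{-2}\chi_{\Om_{i,\eta}}$, as
\[
P(\Om_\eta)\le P(\Om_{1,\eta})+P(\Om_{2,\eta})=\eta^2\int_\TT|\nabla v_\eta|\le C\eta,
\]
so $\Om_\eta$ is a set of small perimeter on the torus. I would then apply the decomposition theorem of Ambrosio--Caselles--Masnou--Morel to obtain an at most countable family of pairwise disjoint indecomposable sets $\{E_k\}$ with $\Om_\eta = \bigsqcup_k E_k$ (modulo null sets), $\sum_k P(E_k)=P(\Om_\eta)$, and pairwise $\mathcal{H}^1$-disjoint reduced boundaries $\partial^*E_k$. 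Setting $\Om_{i,\eta}^k := \Om_{i,\eta}\cap E_k$ and $v_\eta^k := \eta^{-2}(\chi_{\Om_{1,\eta}^k},\chi_{\Om_{2,\eta}^k})$, property (a) is immediate from $E_k\cap E_\ell=\emptyset$, and the pointwise identity $\Om_{i,\eta}=\bigsqcup_k\Om_{i,\eta}^k$ gives half of (b).

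The main obstacle will be the perimeter additivity \emph{per species}, $P(\Om_{i,\eta})=\sum_k P(\Om_{i,\eta}^k)$, since the decomposition theorem only delivers it for the union $\Om_\eta$. To close this gap I would split $\partial^*\Om_{i,\eta}$ along the partition $\TT=\Om_\eta^{(1)}\cup\partial^*\Om_\eta\cup\Om_\eta^{(0)}$: the piece in $\Om_\eta^{(1)}=\bigsqcup_k E_k^{(1)}$ contributes to the internal (two-species) interface of $\Om_{i,\eta}^k$ inside $E_k$; the piece in $\partial^*\Om_\eta=\bigsqcup_k \partial^*E_k$ (up to $\mathcal{H}^1$-null sets, by the theorem) contributes to the external boundary of $\Om_{i,\eta}^k$; and the relation $\chi_{\Om_{1,\eta}}+\chi_{\Om_{2,\eta}}=\chi_{\Om_\eta}$ forces the densities to add, so no point of $\partial^*\Om_{i,\eta}$ can sit in $\Om_\eta^{(0)}$. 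Summing the $\mathcal{H}^1$-measures of the pieces and using the pairwise disjointness of the $\partial^*E_k$ then yields (b).

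Property (c) follows from the classical projection-perimeter inequality. Since $P(E_k)\le P(\Om_\eta)\le C\eta$ and $\eta$ is small, each $E_k$ lifts to a single copy in the universal cover $\R^2$; indecomposability of $E_k$ guarantees that the projection onto any line is an interval, and the coarea argument (each interior point of the projection is crossed by $\partial^*E_k$ at least twice) gives $\diam(E_k)\le P(E_k)/2\le C\eta/2$. Since $\Om_{1,\eta}^k\cup\Om_{2,\eta}^k\subset E_k$, the same bound transfers to $\Om_\eta^k$, completing the plan. The delicate step remains the per-species additivity in (b); everything else reduces to standard BV tools.
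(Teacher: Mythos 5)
Your proposal is correct in substance but follows a genuinely different route from the paper. The paper does not invoke the Ambrosio--Caselles--Masnou--Morel decomposition: it mollifies $\chi_{\Om_{1,\eta}\cup\Om_{2,\eta}}$ at scale $\eta^2$, selects (via the coarea formula, \cite[Theorem~3.42]{AFP}) a level set $\Sigma_\eta$ with smooth boundary and $\text{Per}_\TT(\Sigma_\eta)\le \text{Per}_\TT(\Om_\eta)+\eta^2$, takes its connected components, fattens them by $\eta^2$ and fuses overlapping ones, and defines $\Om_\eta^k:=\Om_\eta\cap\widetilde\Sigma_\eta^k$; properties (a)--(c) then follow because the clusters sit inside pairwise disjoint open sets whose diameters sum to $O(\eta)$. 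Your ACMM route is more intrinsic and avoids the mollification/level-set selection; its one genuinely delicate point is exactly the one you flag, the per-species additivity in (b), and your sketch closes correctly: since $P(\Om_\eta)=\sum_k P(E_k)$ forces $|D\chi_{\Om_\eta}|=\sum_k|D\chi_{E_k}|$, each $\partial^*E_k$ is contained in $\partial^*\Om_\eta$ up to $\mathcal{H}^1$-null sets, so in the standard inclusion $\partial^*(\Om_{i,\eta}\cap E_k)\subset(\partial^*\Om_{i,\eta}\cap E_k^{(1)})\cup(\Om_{i,\eta}^{(1)}\cap\partial^*E_k)\cup(\partial^*\Om_{i,\eta}\cap\partial^*E_k)$ the middle piece is $\mathcal{H}^1$-null (as $\Om_{i,\eta}^{(1)}\subset\Om_\eta^{(1)}$), and summing over $k$ gives $\sum_k P(\Om_{i,\eta}^k)\le P(\Om_{i,\eta})$, the reverse being the triangle inequality. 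For (c) you should spell out two small points: the fact that the essential projection of an indecomposable planar set onto any line is an interval (a cut at a null slice would decompose it, by Vol'pert slicing), and the exclusion of wrap-around on $\TT$, which follows since a set meeting a.e.\ vertical (or horizontal) circle in positive length has perimeter at least $2>C\eta$. What each approach buys: yours gives a canonical decomposition with exact perimeter additivity; the paper's less canonical construction delivers the extra geometric feature that the clusters are separated by pairwise disjoint \emph{open} sets at mutual distance of order $\eta^2$, which is exploited later (in Lemma~\ref{ConcLem1} and in the proof of Theorem~\ref{MinThm}, where components are translated independently without overlapping) -- ACMM components may touch along their boundaries, so using your decomposition downstream would require a small additional separation argument there, though it fully suffices for the lemma as stated.
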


\begin{proof}  Our first step is to identify ``components'' of the clusters $\Om_\eta$.  As we are not assuming these are minimizers, these sets do not have any higher regularity, so we will instead define disjoint open sets $\{\widetilde\Sigma_\eta^k\}_{k\in\NN}$ which disconnect $\Om_\eta$ at the $\eta$-scale.
To this, we first 
let $\rho_\eps(x)=\eps^{-2}\rho(x/\eps)$ be a family of $C^\infty$ mollifiers supported in $B_\eps(0)$, $\int_{\TT}\rho_\eps =1$.  Let $\widetilde\Om_\eta=\Om_{1,\eta}\cup\Om_{2, \eta}$ and $\varphi_\eta=\rho_{\eta^2}*\chi_{\widetilde\Om_\eta}$, which is $C^\infty(\TT)$.  Following \cite[Theorem~3.42]{AFP}, we may choose $t=t(\eta)\in(0,1)$ for which the set $\Sigma_\eta:=\{ x: \ \varphi_\eta(x)>t\}$ is an open set with smooth $\partial\Sigma_\eta \  \subset \  \TT$, with 
$$
\text{Per}_{\TT}(\Sigma_\eta)\le \text{Per}_{\TT}(\Om_\eta) + \eta^2 \le C\eta.  
$$
Moreover (by construction) the Hausdorff distance $d(\Sigma_\eta,\widetilde\Om_\eta)\le \eta^2$.  As $\partial\Sigma_\eta$ is smooth, it decomposes into an at most countable collection of smooth, disjoint, open, connected components, $\Sigma_\eta=\bigcup_{k=1}^\infty \Sigma_\eta^k$, $\Sigma_\eta^k\cap\Sigma_\eta^\ell=\emptyset$, $k\neq\ell$.
As the $\Sigma_\eta^k$ are connected, we also conclude that the diameters,
$$  \sum_{k=1}^\infty \text{diam}_\TT(\Sigma_\eta^k) \le C'\eta  $$
for a constant $C'$ independent of $\eta$.

We next grow our sets $\Sigma_\eta^k$ by considering an $\eta^2$-neighborhood,
$$  \widetilde\Sigma_\eta^k:= \bigcup_{y\in\Sigma_\eta^k} B_{\eta^2}(y),  $$
the $\eta^2$-neighborhoods of the sets $\Sigma_\eta^k$.  In this way, the collection $\{\widetilde\Sigma_\eta^k\}$ covers the original set,
\begin{equation}\label{opencover}
 \Om_\eta, \Sigma_\eta \subset \bigcup_{k=1}^\infty \widetilde\Sigma_\eta^k.
\end{equation}
In expanding $\Sigma_\eta^k$ to $\widetilde\Sigma_\eta^k$, these may no longer be disjoint, but this problem may be overcome by fusing together components which intersect. Indeed, if 
$\widetilde\Sigma_\eta^k\cap \widetilde\Sigma_\eta^\ell \neq\emptyset$ for some $\ell\neq k$, then replace that pair in the list by $\widehat\Sigma_\eta^k:=\widetilde\Sigma_\eta^k\cup\widetilde\Sigma_\eta^\ell$
and reorder (if necessary.) The resulting sets $\{\widetilde\Sigma_\eta^k\}_{k\in\NN}$ will in fact be an at most countable collection of disjoint open sets, each of diameter $O(\eta)$, which cover the clusters $\Om_\eta$.

Now we may define our disjoint components of $\Om_\eta$ via
$$    \Om_\eta^k:= \Om_\eta \cap \widetilde\Sigma_\eta^k.  $$
By \eqref{opencover} we may conclude $\Om_\eta=\bigcup_{k=1}^\infty \Om_\eta^k \, $, and since each $\widetilde\Sigma_\eta^k$ is open and the collection is disjoint, we thus obtain (a), (b) and (c).
\end{proof}

Now that we have decomposed $\Om_\eta$ into disjoint clusters, we describe the limiting structure of the set at the $\eta$-scale, in terms of sets minimizing the blow-up energy $\EE$:

\begin{lemma}\label{ConcLem1}
Let $w_\eta=\eta^{-2}\chi_{\Om_\eta}$ 
 with $\int_{\mathbb{T}^2} w_\eta =M>0$, and $\sup_{\eta>0}E_\eta(w_\eta)<\infty$. Then there exists a subsequence of $\eta \to 0$, 
 points $\{x_{\eta}^k \}$ in $\TT$, and clusters $\{A^k\}$ in $\RR$, such that 
 $\Om_\eta=\bigcup_{k\in\NN}\Om_\eta^k$ satisfies:
\begin{equation}\label{CL12}  \left|  A^k \ \triangle  \,\left( \eta^{-1}\left[ \Om_{\eta}^k -x_{\eta}^k \right]\right) \right| 
   \xrightarrow{\eta_{} \rightarrow 0} 0, \qquad\forall k;
   \end{equation}
Moreover,
\begin{gather}
   \label{CL13}
   M_i=\lim_{\eta_{} \to 0} \sum_{k=1}^\infty  \eta^{-2} |\Om_{i, \eta}^k| = \sum_{k=1}^\infty |A_i^k |, \quad i=1,2, \ \text{and}  \\
   \label{CL14}  \liminf_{\eta_{} \to 0} E_{\eta_{} }(w_{\eta_{} }) \ge
       \sum_{k=1}^\infty \EE(A^k) \ge \overline{e_0}(M).
\end{gather}
\end{lemma}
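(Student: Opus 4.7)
The plan is to use the decomposition from Lemma \ref{components} to split $\Om_\eta$ into disjoint $\eta$-scale components, blow up each component, and extract limiting clusters by BV-compactness; then to prove mass conservation via an isoperimetric tail estimate, and finally to derive the energy lower bound by treating the nonlocal term's self- and cross-interactions separately. Apply Lemma \ref{components} to write $\Om_\eta = \bigcup_{k\in\NN}\Om_\eta^k$ with $\TT$-diameter at most $C\eta$; pick $x_\eta^k \in \Om_\eta^k$ and set $A_\eta^k := \eta^{-1}(\Om_\eta^k - x_\eta^k) \subset \RR$, a 2-cluster of uniformly bounded diameter. The identity $\sum_k\int_\TT|\nabla v_{i,\eta}^k|=\int_\TT|\nabla v_{i,\eta}|$ from Lemma \ref{components}(b) together with the energy bound yields $\sum_k \text{Per}_\RR(A_\eta^k)\le C$. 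After relabeling so that $|\Om_\eta^k|$ is decreasing in $k$, BV-compactness on a fixed bounded ball combined with a diagonal extraction produces, along a subsequence, $A_\eta^k \to A^k$ in $L^1(\RR)$ for every $k$, which is \eqref{CL12}.

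For \eqref{CL13}, set $m_{i,\eta}^k := \eta^{-2}|\Om_{i,\eta}^k| = |A_{i,\eta}^k|$, so that $\sum_k m_{i,\eta}^k = M_i$ for every $\eta$. The planar isoperimetric inequality gives $\sqrt{\pi m_{i,\eta}^k} \le \tfrac{1}{2}\text{Per}_\RR(A_{i,\eta}^k)$, hence $\sum_k \sqrt{m_{i,\eta}^k}\le C$ uniformly in $\eta$. Since the components are ordered by total size, $m_{i,\eta}^k \le (M_1+M_2)/k$, and therefore
\[
  \sum_{k>N} m_{i,\eta}^k \;\le\; \Bigl(\max_{k>N}\sqrt{m_{i,\eta}^k}\Bigr)\sum_{k>N}\sqrt{m_{i,\eta}^k} \;\le\; \frac{C\sqrt{M_1+M_2}}{\sqrt{N+1}}.
\]
This tail bound is uniform in $\eta$; combining it with the pointwise convergence $m_{i,\eta}^k \to |A_i^k|$ and letting first $\eta\to 0$ and then $N\to\infty$ yields $\sum_k |A_i^k| = M_i$.

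For \eqref{CL14}, the perimeter piece of $E_\eta$ equals $\sum_k \text{Per}_\RR(A_\eta^k)$, and lower semicontinuity of the 2-cluster perimeter plus Fatou's lemma on the counting measure give $\liminf_{\eta\to 0}\sum_k\text{Per}_\RR(A_\eta^k)\ge \sum_k\text{Per}_\RR(A^k)$. For the nonlocal term, split into self-interactions ($k=\ell$) and cross-interactions ($k\ne\ell$). In each self-interaction, changing variables $x=x_\eta^k+\eta\tilde x$ and using the expansion $G_\TT(z)=-\tfrac{1}{2\pi}\log|z|+R_\TT(z)$ from \eqref{G2def} produces a dominant $-\tfrac{\log\eta}{2\pi}|A_{i,\eta}^k||A_{j,\eta}^k|$ which cancels the $|\log\eta|^{-1}$ prefactor and leaves $\tfrac{\Gamma_{ij}}{4\pi}|A_i^k||A_j^k|$ in the limit; the remaining double integral $\int\!\int\log|\tilde x-\tilde y|\,d\tilde x\,d\tilde y$ on the $O(1)$-sized sets contributes $O(|\log\eta|^{-1})$. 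The cross-interactions are bounded below by $-CM^2/|\log\eta|\to 0$ since $G_\TT$ is bounded below on $\TT$. Putting these together,
\[
  \liminf_{\eta\to 0} E_\eta(w_\eta) \;\ge\; \sum_k\EE(A^k) \;\ge\; \sum_k e_0(m^k) \;\ge\; \overline{e_0}(M),
\]
where the last inequality uses $\sum_k m^k = M$ from the previous step and the definition of $\overline{e_0}$.

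The main obstacle is \eqref{CL13}: a priori, mass could leak into infinitely many components with individually vanishing masses, which would prevent the final step $\sum_k e_0(m^k) \ge \overline{e_0}(M)$ by producing a defect of the form $2\sqrt\pi(\sqrt{\delta_1}+\sqrt{\delta_2})$ that does not match any corresponding gain in the energy. The combined use of the isoperimetric inequality and the size-ordering bound $m_{i,\eta}^k\le (M_1+M_2)/k$ is precisely what rules this out and allows the chain of inequalities to close with no residual term.
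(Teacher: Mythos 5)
Your proposal is correct and follows essentially the same path as the paper: decomposition via Lemma~\ref{components}, blow-up to clusters $A_\eta^k\subset B_R(0)$, cluster compactness, a tail estimate for \eqref{CL13} based on the isoperimetric inequality combined with the decreasing ordering of the masses, and the split of the nonlocal term into self- and cross-interactions using \eqref{G2def}. The only cosmetic differences are that your tail bound uses $m_{i,\eta}^k\le (M_1+M_2)/k$ directly instead of the paper's choice of a large index $N$ with small $|m^N|$, and that you bound the cross-interactions simply by $G_{\TT}\ge -C$, which suffices here; the paper's finer case analysis of $r_\eta^{k,\ell}$ is recorded only because it is reused later in Lemma~\ref{ConcLem2} and Theorem~\ref{MinThm}.
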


That is, up to sets of negligible area, $\Om_\eta \simeq \bigcup_{k} \left[x_{\eta}^k +\eta A^k\right]$, a disjoint, at most countable, union of {\it fixed} ($\eta$-independent) clusters scaled by $\eta$.  



\begin{proof}  Applying Lemma~\ref{components} to the set $\Om_\eta$ for each fixed $\eta>0$, we obtain an at most countable disjoint collection $\Om_\eta^k=(\Om_{1,\eta}^k,\Om_{2,\eta}^k)$ of clusters in $\TT$, and corresponding $v^k_\eta=\eta^{-2}(\chi_{\Om_{1,\eta}^k}, \chi_{\Om_{2,\eta}^k})\in\BVE$, satisfying conditions (a), (b), (c) of Lemma~\ref{components}.  We denote by 
$$m_\eta^k=(m_{1,\eta}^k, m_{2,\eta}^k)=\eta^{-2}|\Om^k_{\eta}|=
      (\eta^{-2}|\Om^k_{1,\eta}|\ ,\ \eta^{-2}|\Om^k_{2,\eta}|).  $$
  This may be a finite union of size $N_\eta\in\mathbb{N}$, 
in which case we take $\Om_{\eta}^k=(\emptyset,\emptyset)$ for $k>N_\eta$, and the choice of $x_\eta^k$ is irrelevant.  We also recall that it is possible that only one of $m_{i, \eta}^k> 0$.  As
$$  M=(M_1,M_2)=\sum_{k=1}^\infty m_{\eta}^k,  $$
is either a finite sum or a convergent series, without loss of generality we may assume that each sequence $\{\Om_{\eta}^k \}$ is ordered by decreasing cluster mass:  that is, 
$|m_{\eta}^k |=m_{1, \eta}^k+m_{2, \eta}^k \ge |m_{\eta}^{k+1} |$ holds for all $k$.

From the proof of Lemma~\ref{components} we note that each disjoint cluster $\Om^k_{\eta}\subset\Sigma^k_\eta$, with $\{\Sigma^k_\eta\}_{k\in\NN}$ a collection of disjoint open sets.  This disconnection of $\Om_\eta$ also induces a corresponding disconnection on $\TT\setminus\Om_\eta$, and hence the perimeter of the cluster $\Om_\eta$ (see \eqref{cluster_per}) decomposes as
\begin{equation}\label{per_decomp}
\text{Per}_{\TT}(\Om_\eta)=\sum_{k=1}^\infty \text{Per}_{\TT}(\Om^k_{\eta}).  
\end{equation}

For any $k\in\mathbb{N}$, take any $x_{\eta}^k \in \Om_{1,\eta}^k \cup \Om_{2, \eta}^k$.  
By (c) of Lemma~\ref{components} each individual disjoint cluster $\Om_{\eta}^k $ has bounded diameter, and thus there exists $R>0$ independent of $\eta$ with
\begin{equation}\label{esti}
   \Om_{\eta}^k \subset B_{\eta}^k \subset B_{\eta R}(x_{\eta}^k ).
\end{equation}
An immediate consequence of \eqref{esti} is that we can think of each cluster $\Om_{\eta}^k $ as a subset of $\RR$,  and do a blow-up at scale $\eta$. We define
$A_{\eta}^k  = (A_{1, \eta}^k, A_{2, \eta}^k )$, a cluster in $\mathbb{R}^2$, via 
 $$  A_{i, \eta}^k = \eta^{-1} (\Omega_{i, \eta}^k - x_{\eta}^k ) \subset B_{R} (0 )\subset\RR.  $$
For each $k$, $A_{\eta}^k $ is a uniformly bounded family of finite perimeter sets in $\RR$, and
$$  \text{Per}_{\RR} (A_{\eta}^k ) \le \eta^{-1}\int_{\TT} |\nabla \chi_{\Om_{\eta}^k }|
   \le \eta\int_{\TT} |\nabla w_\eta| \le E_\eta(w_\eta),  $$
for each $\eta>0$.   
By Proposition 29.5 in \cite{maggi}, for each $k$ there exists a subsequence (still denoted by $\eta$) $\eta\to 0$ and a cluster $A^k = (A_{1}^{k}, A_{2}^{k})$ in $\mathbb{R}^2$ with $\lim_{\eta \to 0}|A_{\eta}^{k} \ \triangle A^k |=0$, and for which
\begin{eqnarray}
\text{Per}_{\RR}  (A^k ) \leq \underset{\eta \rightarrow 0}{\liminf}  \ \text{Per}_{\RR} ( A_{\eta}^k ),  \ \  |A^k |  =  \underset{\eta \rightarrow 0}{\lim}  | A_{\eta}^k  | = m^k = (m_1^k, m_2^k ), \nonumber
\end{eqnarray}
Thus there exists a common subsequence, which do not relabel, along which each $A_{\eta}^k \to A^k$ in the above sense.

Next we show \eqref{CL13} holds.
As $M_i = \sum_{k=1}^\infty m_{i,\eta}^k $,
we obtain 
$$  M_i=\lim_{\eta \to 0}\sum_{k=1}^\infty m_{i, \eta}^k \ge \sum_{k=1}^\infty m_i^k, \qquad i=1,2.
$$
To obtain the opposite inequality, let $\eps>0$ be given and $C_0=\sup_{\eta>0} E_\eta(w_\eta)$.  By the convergence of the series above we may choose $N\in\mathbb{N}$ for which both
$$ \sum_{k=N}^\infty m_i^k < {\eps\over 2}, \quad \text{and} \quad m_i^N \le  |m^N| < {\eps^2\pi\over 4 C_0^2}, \qquad i=1,2.  $$ 
Since $m_{i, \eta}^k \to m_i^k $ as $\eta \to 0$, we may choose $\eta_0>0$ so that 
$$   m_{i, \eta}^N < 2m_i^N < {\eps^2\pi\over 2 C_0^2}, \qquad \forall \eta <\eta_0.  $$
Using the ordering  $|m_{\eta}^{k+1} |\le |m_{\eta}^k |$ and the isoperimetric inequality,
\begin{align*}
\sum_{k=N}^\infty m_{i, \eta}^k &\le \sum_{k=N}^\infty \sqrt{|m_{\eta}^k |}\sqrt{m_{i,\eta}^k} \le \sqrt{|m_{\eta}^N |}\sum_{k=N}^\infty \sqrt{m_{i,\eta}^k}
\\
&\le \sqrt{ {|m_{\eta}^N |\over 4\pi}} \sum_{k=N}^\infty \text{Per}_\RR (A_{i, \eta}^k )  \\
&\le \sqrt{ {|m_{\eta}^N|\over 4\pi}}\left[ \eta \int_\TT |\nabla w_\eta | \right]< \sqrt{ {|m_{\eta}^N|\over 4\pi}} C_0  < {\eps\over 2},
\end{align*}
for all $\eta <\eta_0$.  Finally, since $m_{\eta}^k \to m^k$ as $\eta \to 0$, by choosing $\eta_0$ smaller if necessary we have
$$  \sum_{k=1}^{N-1} m_{i, \eta}^k  \le \sum_{k=1}^{N-1} m_i^k + {\eps\over 2}, \qquad \forall \eta <\eta_0.  $$
Thus, for all $\eta <\eta_0$, 
$$  M_i = \sum_{k=1}^\infty m_{i, \eta}^k  < 
  \sum_{k=1}^{N-1} m_{i, \eta}^k  + {\eps\over 2} <
  \sum_{k=1}^{N-1} m_i^k + \eps \le  \sum_{k=1}^\infty m_i^k + \eps,
$$
and \eqref{CL13} is verified.

It remains to calculate the energy in this limit.  Let $\tilde w_{i, \eta}^k =\eta^{-2}\chi_{\Om_{i, \eta}^k }$, $i=1,2$, and $\tilde w_{i,\eta}=\sum_{k\in\NN} \tilde w_{i,\eta}^k $.  
  As the clusters $\Om_{\eta}^k $ are smooth with disjoint closures, the perimeter term decomposes exactly at the $\eta$-scale, and by lower semicontinuity
$$  \eta \int_\TT |\nabla \tilde w_\eta | = \eta^{-1}  \sum_{k\in\NN} \Peri_\TT (\Om_{\eta}^k ) 
   \ge   \sum_{k\in\NN} \Peri_\RR (A^k) +o(1).  $$
The nonlocal terms also split into a double sum:  for $i,j=1,2$,
\begin{gather*}
\int_\TT\int_\TT \tilde w_{i,\eta}(x) G_\TT(x-y) \tilde w_{j,\eta} (y)\, dx\, dy 
= \sum_{k,\ell\in\NN} I_{i,j}^{k,\ell}, \quad\text{with} \\
I_{i,j}^{k,\ell}:= \int_\TT\int_\TT  \tilde w_{i,\eta}^k (x) \, G_\TT(x-y)\, \tilde w_{j, \eta}^{\ell} (y) \, dx\, dy = \int_{A_{i, \eta}^k }\int_{A_{j, \eta}^{\ell}} G_\TT( (x_{\eta}^k +\eta \tilde x) - ( x_{\eta}^{\ell}+\eta \tilde y) ) d\tilde x\, d\tilde y.
\end{gather*}
First consider the self-interaction terms, $\ell=k$.  These have two parts,
\begin{align}\nonumber
I_{i,j}^{k,k} &= \int_{A_{i, \eta}^k }\int_{A_{j, \eta}^k } \left[ {1\over 2 \pi} \log {1\over \eta |\tilde x-\tilde y|} + R_\TT(\eta \tilde x - \eta \tilde y)\right] d\tilde x\, d\tilde y  \\
\label{Ikk}
&={|\log\eta |\over 2\pi} |A_{i, \eta}^k | \, |A_{j, \eta}^k | + J_{i,j}(A_{\eta}^k),
\end{align}
where we define (for clusters $A=(A_1,A_2)$ in $\RR$)
\begin{equation}\label{Jk}
J_{i,j}(A):= \int_{A_{i}} \int_{A_{j}} \left[
   \frac{1}{2\pi} \log{1\over |\tilde x-\tilde y|}+R_{\TT}(\eta(\tilde x-\tilde y))\right]d\tilde x\, d\tilde y.
\end{equation}
As  the regular part of the Green's function $R_{\TT}(x-y)\ge C_1$ is bounded below on $\TT$, and (using \eqref{esti}) $A_{i, \eta}^k \subset B_R(0)$ for all $k$, we obtain the estimate
\begin{align*}\nnn
I_{i,j}^{k,k}
&\ge \left[{|\log\eta | \over 2\pi} + C_1\right] |A_{i, \eta}^k | \, |A_{j, \eta}^k | -
        C_2 |A_{i, \eta}^k |,
\end{align*}   
with $C_2=(2\pi)^{-1}\int_{B_R(0)} |\log|\tilde x-\tilde y||d\tilde x $.  Thus, on diagonal,
\begin{align} \label{CL15} \sum_{k\in\NN} \sum_{i,j=1}^2 \frac{ \Gamma_{ij}}{2} I_{i,j}^{k,k} & \ge 
     \sum_{k\in\NN}\sum_{i,j=1}^2 \Gamma_{ij} {|\log\eta| \over 4\pi} |A_{i, \eta}^k | \, |A_{j, \eta}^k | - O(1) \\
     \label{CL16}
    &= {|\log\eta|\over 4\pi}\sum_{k\in\NN} \sum_{i,j=1}^2 \Gamma_{ij} m_i^k\, m_j^k
       - o(|\log\eta|).
\end{align}

We estimate interaction terms between component clusters in terms of their distance, 
\begin{equation}\label{rdef}
r_{\eta}^{k,\ell} :=  \max\{\dist_\TT(x,y) \ | \ x\in\Om_{\eta}^k, \ y\in\Om_{\eta}^{\ell}),
\end{equation}
   for $k\neq\ell$ with $|\Om_{\eta}^{k}|, |\Om_{\eta}^{\ell}|\neq 0$.
The situation is different depending on whether $r_{\eta}^{k,\ell}$ is bounded from below or not.  In case that (taking a further subsequence if necessary) $r_{\eta}^{k,\ell}\ge \delta_0>0$ for all $\eta$, for $k\neq\ell$ and $i,j=1,2$ we have
\begin{align}
\nnn
I_{i,j}^{k,\ell}&= \eta^{-4}\int_{\Om_{i,\eta}^k } \int_{\Om_{j, \eta}^{\ell}} 
    G_\TT(x-y)\, dx\, dy \\
    \nnn
    &= \eta^{-4} \, |\Om_{i,\eta}^{k}|\, |\Om_{j,\eta}^{\ell}| 
    \left[G_\TT(x_{\eta}^{k}- x_{\eta}^{\ell}) - O(\eta)\right] \\
    \label{CL17}
    &= |A_{i, \eta}^{k}|\, |A_{j, \eta}^{\ell}| \left[G_\TT(x_{\eta}^{k}- x_{\eta}^{\ell}) - O(\eta)\right],
\end{align}
which is of lower order than the self-interaction term.  If $r_{\eta}^{k,\ell}\to 0$ as $\eta\to 0$, we have coalescence of two or more clusters; in this case we estimate:
\begin{align}
\nnn
I_{i,j}^{k,\ell}&= \eta^{-4}\int_{\Om_{i, \eta}^{k}} \int_{\Om_{j, \eta}^{\ell}} 
    G_\TT(x-y)\, dx\, dy \\
    \nnn
    &=  \eta^{-4}\int_{\Om_{i, \eta}^{k}} \int_{\Om_{j, \eta}^{\ell}} 
       \left[ -{1\over 2\pi} \log |x-y| + R_\TT(x-y)\right] dx\, dy \\
       \nnn
    &\ge  \eta^{-4} \, |\Om_{i,\eta}^{k}|\, |\Om_{j, \eta}^{\ell}|
        \left[ {|\log r_{\eta}^{k,\ell}|\over 2\pi} - C \right]
\\  \label{CL18}
    &=|A_{i, \eta}^{k}|\, |A_{j, \eta}^{\ell}| \left[ {|\log r_{\eta}^{k,\ell}| \over 2\pi} - C
    \right].
\end{align}
This term may be of the same order as the self-interaction term, if the distance $r_{\eta}^{k,\ell}$ is comparable to $\eta$, in the sense that $|\log r_{\eta}^{k,\ell}|\sim |\log\eta|$.  Later, we will need to distinguish these cases, but for this lemma we need only note that they are nonnegative apart from remainders of order $o(|\log\eta|)$.  Thus, we have the lower bound,
\begin{align} \nnn
E_\eta (w_\eta ) &\ge E_\eta(\tilde w_\eta) - O(\eta)  \\  \nnn
  &=  \sum_{k\in\NN} \Peri_\RR (A_{\eta}^k)
        +{1\over|\log\eta|}\sum_{k,\ell\in\NN} \sum_{i,j=1}^2  \frac{\Gamma_{ij}}{2} I_{i,j}^{k,\ell} -o(1)\\ \nnn
  &\ge \sum_{k\in\NN} \left[ \Peri_\RR (A^k) + 
             {1\over 4\pi}\sum_{i,j=1}^2 \Gamma_{ij} m_i^k\, m_j^k\right]  -o(1) \\
  &= \sum_{k\in\NN} \EE(A^k) - o(1).   \label{CL19}
\end{align}  
As $\sum_{k\in\NN} m_i^k=M_i$, $i=1,2$, we obtain \eqref{CL14}.
\end{proof}


With the decomposition from Lemma~\ref{ConcLem1} we are now ready to prove the first $\Gamma$-convergence statement.  We note that 
 \begin{eqnarray} \label{e0barInequal}
\overline{e_0 } \left (  \sum_{k=1}^{\infty} m_{}^k \right )   \leq  \sum_{k=1}^{\infty} \overline{e_0 }(m^k),
\end{eqnarray}
 where $m^k  = (m_1^k, m_2^k )$ and $ \sum_{k=1}^{\infty} m_{}^k =(\sum_{k=1}^{\infty} m_1^k, \sum_{k=1}^{\infty} m_2^k) $.

 \subsection*{Proof of the first Gamma limit}
We can now prove $\Gamma$ convergence of $E_\eta\to E_0$.

\begin{proof}[\bf{Proof of Theorem \ref{twodfirst}}]
Let $ v_{\eta} = (v_{1,\eta}, v_{2, \eta}) $ be a sequence such that the energies $E_{\eta}^{}  (v_{\eta} )$ and masses $\int_{\mathbb{T}^2} v_{i, \eta}, i = 1, 2 $ are bounded.  Taking a subsequence (not relabeled,) we may assume $\int_\TT v_{i,\eta}\to M_i$, $i=1,2$.  

\medskip

\noindent  {\sc Compactness and Lower Limit:}  Applying Lemma~\ref{ConcLem1} to $v_\eta$, we obtain at most countably many points $x_\eta^k\in\TT$, clusters $\Om_\eta^k$ in $\TT$ and $A^k$ in $\RR$, satisfying \eqref{CL12}-\eqref{CL14}.  In particular, by \eqref{CL12} we may conclude that 
$$   \eta^{-2}\chi_{\Om_{i,\eta}^k}- m_i^k\, \delta_{x^k_\eta} \rightharpoonup 0, $$
in the sense of measures.  Applying Lemma~5.1 of \cite{bi1} (which is based on a general concentration-compactness result of Lions \cite{lions},) we may conclude that (taking a further subsequence,) $v_\eta\rightharpoonup v_0$ with $v_0=(v_{1,0},v_{2,0})=\sum_{k=1}^\infty (m_1^k,m_2^k)\, \delta_{x^k}$, with distinct $x^k\in\TT$ and $m^k_i\ge 0$, as desired.  From \eqref{CL13} we may conclude that $M_i=\sum_{k\in\NN} m^k_i$.
Applying \eqref{CL14} and \eqref{e0barInequal}, we obtain the lower limit.

\medskip

\noindent {\sc Upper limit:}  The upper bound follows from essentially the same argument as in the proof of Theorem~6.1 of \cite{bi1}; the fact that our $v_\eta$ are supported on 2-clusters does not affect the reasoning.  We provide a brief summary here for completeness, as the bound is important for the proof of Theorem~\ref{MinThm}. 

  Let $v_0 = \sum_{k=1}^{\infty} (m_1^k,m_2^k) \delta_{x^k}$ with $ \{ x^k  \}$ distinct and $ m_i^k \geq 0$, for $k\in\NN$, $i=1,2$, with $M_i=\sum_k m^k_i<\infty$, and $E_{0} (v_{0} ) < \infty$.  As the sums are convergent, we may approximate each by truncation to $K<\infty$ terms, $\tilde{v}_{i,0} = \sum_{k=1}^{ K } m_i^k \delta_{x_i^k}$, and in that case
 \begin{eqnarray}
  E_0 \left ( \tilde{v}_{0} \right )  = \sum_{k=1}^{K} \overline{e_0 }(m^k ) \leq \sum_{k=1}^{\infty} \overline{e_0 }(m^k ) = E_0^{} (v_0 ). \nonumber
 \end{eqnarray}
And also note that $\overline{e_0 }(m^k )$ can be approximated to arbitrary precision by
\begin{eqnarray}
\sum_{\ell=1}^{L_k}  e_0 (m^{k \ell}),  \nonumber
\end{eqnarray}
where $m^{k \ell} = (m_1^{k \ell}, m_2^{k \ell})$ and $\sum_{\ell=1}^{\infty} m_i^{k \ell} = m_i^k$.
Therefore, it is sufficient to construct a sequence $  v_{\eta} \rightharpoonup
  \tilde v_{0 } $ such that 
\begin{eqnarray}
 \limsup_{\eta\to 0} E_{\eta}^{}  (v_{\eta} ) \leq \sum_{k=1}^{K} e_0 (m^k )   \text{ for } \tilde v_{i,0} = \sum_{k=1}^{K} m_i^k \delta_{x_i^k}. \label{reduced} 
\end{eqnarray}
To do this, for each pair $m^k=(m^k_1,m^k_2)$ we let $A^k$ be the cluster in $\RR$ which attains the minimum of the blow-up energy, $e_0(m^k)=\EE(A^k)$, and $z^k:=\chi_{A^k}=(\chi_{A^k_1},\chi_{A^k_2})$.  Choosing $K$ points $\xi^k\in\TT$ with dist${}_\TT (\xi^k,\xi^\ell)\ge \delta>0$ for $k\neq\ell$, we claim that the configuration
$$   v_\eta(x) = \eta^{-2}\sum_{k=1}^K z^k (\eta^{-1}(x-\xi^k))  $$
satisfies \eqref{reduced}.  Indeed, the perimeter term splits exactly as in \eqref{per_decomp}, as well as the self interaction terms,
$$  I^{k,k}_{i,j} = {|\log\eta|\over 2\pi} m^k_i\, m^k_j + o(|\log\eta|).  $$
The interaction terms $I^{k,\ell}_{i,j}$ for $k\neq\ell$ are uniformly bounded in $\eta$, as noted in \eqref{CL17}.  This completes the proof of the first $\Gamma$-limit.

\end{proof}


\subsection*{Minimizers of $E_\eta$}

We now continue towards the proof of Theorem~\ref{MinThm} concerning global minimizers of $E_\eta$.  We next consider configurations whose energy $E_\eta(w_\eta)\le \overline{e_0}(M)$, coinciding  with the minimum value suggested by the $\Gamma$-limit.  With this tighter bound we may obtain more information about the component clusters $\Om_{\eta}^{k}$ and their centers $x_{\eta}^{k}$.  However, note that this is {\em not quite} sufficient to conclude that ``coalescence'' of minimizing clusters cannot occur.

\begin{lemma}\label{ConcLem2}  Let $w_\eta=\eta^{-2}\chi_{\Om_\eta}$ with $\int_{{\mathbb{T}}^2} w_\eta =M>0$, and 
$$   \limsup_{\eta\to 0} E_\eta(w_\eta) \le \overline{e_0}(M).  $$
Then there exists a subsequence (still denoted by) $\eta$, $K \in\NN$, clusters $\Om_{\eta}^{k}\subset\TT$ and $A^k\in\RR$ satisfying \eqref{MT1}, \eqref{MT2}, and \eqref{MT3}.  In addition,  
\begin{equation}\label{CL21}
\limsup_{\eta \to 0} {|\log r_{\eta}^{k,\ell}|\over |\log \eta |} =0, \qquad \forall k\neq\ell.
\end{equation}
\end{lemma}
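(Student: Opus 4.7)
The plan is to combine Lemma~\ref{ConcLem1} with the finiteness result Theorem~\ref{finiteness} to obtain \eqref{MT1}--\eqref{MT3}, then to establish \eqref{CL21} by reinserting the off-diagonal interaction estimate \eqref{CL18} into the energy lower bound. The key point is that the hypothesis $\limsup E_\eta(w_\eta) \le \overline{e_0}(M)$ forces the extracted masses $\{m^k\}$ to be a minimizing configuration of $\overline{e_0}(M)$, which both triggers finiteness and removes the only degenerate obstruction to the no-coalescence argument.

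First I apply Lemma~\ref{ConcLem1} to $w_\eta$, extracting a subsequence (not relabeled), clusters $\Om_\eta^k \subset \TT$ with centers $x_\eta^k$, and blow-up limits $A^k \subset \RR$ with masses $m^k = |A^k|$ satisfying \eqref{CL12}--\eqref{CL14} and $\sum_k m^k = M$. The upper-bound hypothesis together with \eqref{CL14} forces equalities throughout the chain
\[
\overline{e_0}(M) \ge \limsup_{\eta\to 0} E_\eta(w_\eta) \ge \liminf_{\eta\to 0} E_\eta(w_\eta) \ge \sum_k \EE(A^k) \ge \sum_k e_0(m^k) \ge \overline{e_0}\Bigl(\sum_k m^k\Bigr) = \overline{e_0}(M).
\]
Since $\EE(A^k) \ge e_0(m^k)$ term by term and the sums coincide, $\EE(A^k) = e_0(m^k)$ for every $k$, giving \eqref{MT2}. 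The equality $\sum_k e_0(m^k) = \overline{e_0}(M)$ identifies $\{m^k\}$ as a minimizing configuration of \eqref{mine0}, and Theorem~\ref{finiteness} then produces a finite $K$ with $m^k = 0$ for $k > K$; \eqref{MT3} follows, and summing \eqref{CL12} over the finite index set yields \eqref{MT1}.

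For \eqref{CL21}, I extract a further diagonal subsequence so that $\alpha^{k,\ell} := \lim_{\eta\to 0} |\log r_\eta^{k,\ell}|/|\log \eta|$ exists in $[0,+\infty]$ for every coalescing pair ($r_\eta^{k,\ell}\to 0$). Inserting \eqref{CL18} and $|A_{i,\eta}^k|\to m_i^k$ into the energy lower bound of Lemma~\ref{ConcLem1} sharpens it to
\[
\liminf_{\eta\to 0} E_\eta(w_\eta) \;\ge\; \sum_{k=1}^K \EE(A^k) \;+\; \sum_{k \neq \ell} \frac{\alpha^{k,\ell}}{4\pi} \sum_{i,j=1}^2 \Gamma_{ij}\, m_i^k\, m_j^\ell.
\]
The left-hand side already equals $\sum_k \EE(A^k)$, so every nonnegative product $\alpha^{k,\ell} \sum_{ij}\Gamma_{ij} m_i^k m_j^\ell$ must vanish. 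Since $\Gamma_{ii} > 0$ and $\Gamma_{12} \ge 0$, the weight $\sum_{ij}\Gamma_{ij} m_i^k m_j^\ell$ is strictly positive except in the degenerate case that $\{k,\ell\}$ consists of two single bubbles of mutually exclusive constituents with $\Gamma_{12} = 0$; outside that subcase we conclude $\alpha^{k,\ell} = 0$.

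The main obstacle is exactly this degenerate subcase, but it cannot actually occur in our decomposition: if $m^k=(m_1^k,0)$, $m^\ell=(0,m_2^\ell)$, and $\Gamma_{12}=0$, then replacing the pair by the single double bubble $(m_1^k, m_2^\ell)$ changes $\sum_j e_0(m^j)$ by $p(m_1^k, m_2^\ell) - 2\sqrt{\pi m_1^k} - 2\sqrt{\pi m_2^\ell}$, which is strictly negative by the double-bubble isoperimetric inequality \cite{FABHZ}. This strictly lowers the total energy, contradicting the minimality of $\{m^k\}$ established above. Hence the degenerate pattern is excluded, $\alpha^{k,\ell} = 0$ for every $k \neq \ell$, and \eqref{CL21} follows.
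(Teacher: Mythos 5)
Your proposal is correct and follows essentially the same route as the paper: Lemma~\ref{ConcLem1} plus the forced equality chain yields \eqref{MT2}--\eqref{MT3} and the minimality of $\{m^k\}$ for \eqref{mine0}, Theorem~\ref{finiteness} supplies $K<\infty$, and retaining the off-diagonal lower bound \eqref{CL18} in the energy expansion gives \eqref{CL21}; the only (easily filled) omission is that \eqref{MT1} also requires $\eta^{-2}\sum_{k>K}|\Om_{\eta}^k|\to 0$, which follows at once from \eqref{CL13} since $\sum_{k\le K}m^k=M$. Your explicit handling of the degenerate pair --- two single bubbles of opposite species with $\Gamma_{12}=0$, excluded via minimality and the strict double-bubble inequality $p(m_1,m_2)<2\sqrt{\pi m_1}+2\sqrt{\pi m_2}$ --- is in fact more careful than the paper's proof, which at that point simply asserts that one of $m^k$, $m^\ell$ must vanish.
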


We recall that $r_\eta^{k,\ell}$ is defined in \eqref{rdef}.

\begin{proof}
The existence of an at most countable collection of disjoint clusters $\Om_{\eta}^{k}$ and their blowup sets $A^k$, with $m^k=|A^k|$, follows from Lemma~\ref{ConcLem1}.  From \eqref{CL14} we obtain 
$$\overline{e_0}(M)\ge  \sum_{k=1}^\infty \EE(A^k)\ge \sum_{k=1}^\infty e_0(m^k) 
\ge \overline{e_0}(M),  $$
and so each term is equal.  In particular, 
$$   \sum_{k\in\NN} \left[ \EE(A^k)-e_0(m^k) \right] =0, $$
and since each term in the sum is non-negative, each must vanish.  Therefore, $\EE(A^k)=e_0(m^k)$ and $A^k$ is a minimizer for each $k$.  By Theorem \ref{finiteness} we conclude that there are only a finite number $K \in\NN$ of nontrivial connected clusters $A^1,\dots,A^K$ in the limit.  Therefore, we have
$$\sum_{k=K+1}^\infty |A_\eta^k|=\sum_{k=K+1}^\infty \eta^{-2}|\Om_\eta^k|\to 0,
$$
 and it suffices to consider the finite union $\bigcup_{k=1}^K \Om_\eta^k$.

To prove \eqref{MT1}, we note that for each component $k$,
$$  \eta^{-2}\left|\Om_{\eta}^{k} \ \triangle \ (\eta A^k + x_{\eta}^{k})\right| = 
         \left| \eta^{-1}(\Om_{\eta}^{k}-x_{\eta}^{k}) \ \triangle \ A^k\right| 
            \xrightarrow{\eta_{} \rightarrow 0} 0,  $$
by \eqref{CL12}.  As the number of components is uniformly bounded, \eqref{MT1} follows.

Finally, we must show that the distance between distinct connected clusters is large relative to $\eta$.  Assume that $\exists \ k\neq\ell$ and $c >0$ with 
${|\log r_{\eta}^{k,\ell}| \over  |\log \eta |}\ge c.$  Returning to the lower bound \eqref{CL19}, we retain the term involving $I_{i,j}^{k,\ell}$ in the third line, and obtain:
\begin{align*}
\overline{e_0}(M) = E_\eta (\tilde w_\eta)+o(1) & \ge \sum_{k=1}^K \EE(A^k) 
     + {1\over 4\pi |\log\eta|}\sum_{i,j=1}^2  \Gamma_{ij} I_{i,j}^{k,\ell} + o(1) \\
    &\ge \overline{e_0}(M) + {c \over 4\pi} \sum_{i,j=1}^2  \Gamma_{ij} m_i^k m_j^\ell +o(1).
\end{align*}
We conclude that at least one of $m^k, m^\ell=0$, and thus no two connected clusters can accumulate at that scale. 
\end{proof}

We remark that we have shown that {\em in the limit} $\eta\to 0$, only a finite number $K<\infty$ of nontrivial connected clusters remain.  It is possible under the hypotheses of Lemma~\ref{ConcLem2} that for $\eta>0$, $\Om_\eta$ has additional (and perhaps an unbounded number of) components with vanishing mass.  For minimizers this should not be the case, but it requires further arguments involving regularity of minimizers (see \cite{ABCT2}) to make this conclusion.


\medskip

\begin{proof}[\bf{Proof of Theorem~\ref{MinThm}}]   Let $v_\eta^{\ast}$ be minimizers of $E_\eta$ with mass $\int_\TT v_\eta^{\ast}=M$.
From the derivation of the Upper Bound in the proof of Theorem~\ref{twodfirst}, we have $\limsup_{\eta\to 0} E_\eta(v_\eta^{\ast})\le \overline{e_0}(M)$.   So by Lemma~\ref{ConcLem1} we obtain the decomposition of $\Om_\eta$ as in that lemma, and the convergence of the minimum values,
$$  \lim_{\eta\to 0} E_\eta(v_\eta^{\ast})=\overline{e_0}(M).  $$
It remains to show that the centers $x_{\eta}^{k}$, $k=1,\dots,K$, remain isolated from each other and in fact converge to minimizers of $\mathcal{F}_K$, where $K$ is determined in Lemma~\ref{ConcLem2}.

For this purpose we must refine the upper and lower bounds on the energy.  Recall from the proof of Lemma~\ref{ConcLem1}, the definition of the clusters $A_\eta^k=\eta^{-1}(\Om_\eta^k- x_\eta^k)$ in $\RR$, with mass $m_\eta^k$, which converge in measure to $A^k$, minimizers of $e_0(m^k)$.  As Lemma~\ref{components} ensures that the components $\Om_\eta^k$ of $\Om_\eta$ are separated by disjoint open sets $\widetilde\Sigma_\eta^k$, we may deform $\Om_\eta$ by moving the centers on $\TT$, while avoiding overlapping. Choose $K$ distinct points $y^k\in\TT$, $k=1,\dots,K$, and define 
$$  \hat\Om_{\eta}^{k}:= \Om_{\eta}^{k}+ y^k -x_{\eta}^{k}= \eta A_{\eta}^{k}+y^k,
$$
and 
$$  w_\eta:=\eta^{-2}\sum_{k=1}^K \chi_{\hat\Om_{\eta}^k} 
   = \sum_{k=1}^K v_\eta^k(\cdot -x_\eta^k + y^k).
$$
Note that if we were to choose $y^k=x_\eta^k$, then $w_\eta=v_\eta$.  Moving the centers of the components leaves the perimeter and self-interaction terms unchanged, but the interaction terms between disjoint components is affected.  Indeed,
\begin{equation}\label{wenergy}
E_\eta(w_\eta)=  \sum_{k=1}^K \EE(A_{\eta}^{k})
  + \sum_{i,j=1}^2 {\Gamma_{ij}\over 2 |\log\eta|}\left[\sum_{k=1}^K J_{i,j}(A_{\eta}^k) 
    +\sum_{k,\ell=1\atop k\neq\ell}^K \int_{A_{i,\eta}^k} \int_{A_{j,\eta}^\ell}
       G_{\TT}(y^k-y^\ell +\eta(x-y))\, dx\, dy
    \right]
\end{equation}

In particular, let $y^1,\dots,y^K$ be minimizers of the point interaction energy,
$$  \mu_K:= 
\min_{\xi^1,\dots,\xi^k\in\TT}\mathcal{F}_K(\xi^1,\dots,\xi^K; \{m^1,\dots,m^k\})
=\mathcal{F}_K(y^1,\dots,y^K; \{m^1,\dots,m^k\}).$$
Because of the logarithmic repulsion at short range, the points $y^1,\dots,y^K$ are distinct and thus well-separated,  $G_\TT$ is smooth in each integral appearing in the last term of  \eqref{wenergy}.  We may thus pass to the limit in this term to obtain: 
\begin{equation}\label{Glimit}
 \int_{A_{i,\eta}^k} \int_{A_{j,\eta}^\ell}
       G_{\TT}(y^k-y^\ell +\eta(x-y))\, dx\, dy - o(1)     \xrightarrow{\eta_{} \rightarrow 0} G_\TT( y^k-y^\ell)\, m_i^k\, m_j^\ell ,  
\end{equation}
to obtain an upper bound,
\begin{align}\nonumber
E_\eta(v_\eta^{\ast})&\le E_\eta(w_\eta) \\
 &= \sum_{k=1}^K \EE(A_{\eta}^{k})
  + \sum_{i,j=1}^2 {\Gamma_{ij}\over 2 |\log\eta|}\sum_{k=1}^K J_{i,j}(A_{\eta}^k) 
     + {\mu_K\over |\log\eta|}
      + o(|\log\eta|^{-1}).
\label{bestupper}
\end{align}

We next claim that the centers $x^1_\eta,\dots,x^K_\eta$ are well-separated: $\exists \delta>0$ for which $r_{\eta}^{k,\ell}\ge\delta>0$, $k\neq\ell$.  By taking a further subsequence (still denoted by $\eta$) if necessary we may then conclude that each sequence $\{x_{\eta}^{k}\}_{}$ converges to a distinct $x^k\in\TT$.
Indeed, assume the contrary, so  $\exists \ k_0\neq\ell_0$ for which $r_{\eta}^{k_0,\ell_0}\to 0$ as $\eta \to 0$.  We then apply the lower bound \eqref{CL18} and \eqref{wenergy} (with $w_\eta=v_\eta$) to derive a lower bound of the form:
\begin{align*}
E_\eta (v_\eta^{\ast})&\ge \sum_{k=1}^K \EE(A_{\eta}^{k}) 
+ \sum_{i,j=1}^2 {\Gamma_{ij}\over 2 |\log\eta|}\sum_{k=1}^K J_{i,j}(A_{\eta}^k)
+ {|\log r_{\eta}^{k_0,\ell_0}|\over |\log\eta|} \sum_{i,j=1}^2 {\Gamma_{ij}\over 4\pi}  |A_{\eta}^{k_0}|\, |A_{\eta}^{\ell_0}| - o(|\log\eta |^{-1}).
\end{align*}
Matching the above lower bound with the upper bound \eqref{bestupper}, we obtain:
$$  |\log r_{\eta}^{k_0,\ell_0}| \sum_{i,j=1}^2 {\Gamma_{ij}\over 4\pi}  |A_{\eta}^{k_0}|\, |A_{\eta}^{\ell_0}| \le C,  $$
with constant $C$ independent of $\eta$.  As 
$|A_{\eta}^{k_0}|\ ,  |A_{\eta}^{\ell_0}|\to m^{k_0},m^{\ell_0}$, which are not both zero, this is impossible.  Thus, each $r_{\eta}^{k_0,\ell_0}$ is bounded away from zero and the claim is verified.

Finally, we show that the limiting locations $x^k=\lim_{\eta\to 0} x_\eta^k$ must minimize $\mathcal{F}_K$.  Using $E_\eta(v_\eta^{\ast})\le E_\eta(w_\eta)$, and writing the energy expansion \eqref{wenergy} for both $v_\eta^{\ast}$ and $w_\eta$ (choosing points $y^k$ which minimize $\mathcal{F}_K$,) all terms cancel exactly except for the interactions, and we are left with:
$$  \sum_{i,j=1}^2  \frac{\Gamma_{ij}}{2}  \sum_{k,\ell=1\atop k\neq\ell}^K \int_{A_{i,\eta}^k} \int_{A_{j,\eta}^\ell}
       G_{\TT}(x_\eta^k-x_\eta^\ell +\eta(x-y))\, dx\, dy
   \le  \sum_{i,j=1}^2  \frac{\Gamma_{ij}}{2}  \sum_{k,\ell=1\atop k\neq\ell}^K \int_{A_{i,\eta}^k} \int_{A_{j,\eta}^\ell}
       G_{\TT}(y^k-y^\ell +\eta(x-y))\, dx\, dy.
$$
As the collections $\{y^1,\dots,y^K\}$ and $\{x^1,\dots,x^K\}$ are well separated on $\TT$, we may pass to the limit as in \eqref{Glimit}:  $|A_{i,\eta}^k\triangle A_i^k|\to 0$, $m^k_{i,\eta}\to m^k_i$, and $x_\eta^k\to x^k$.  By Lebesgue dominated convergence we pass to the limit in each integral to obtain
$$   \mathcal{F}_K(x^1,\dots,x^K; \{m^1,\dots,m^K\})\le \mu_K, $$
and hence $\{x^1,\dots,x^K\}$ are minimizers of $\mathcal{F}_K$.  This completes the proof of Theorem~\ref{MinThm}.
\end{proof}

\begin{remark}
\rm  We note that $J_{i,j}(A^k)$ is related to the constant term in the second $\Gamma$-limit $F_0$, via
$$  \lim_{\eta\to 0} \sum_{i,j =1}^2   \sum_{k=1}^{K} \frac{\Gamma_{ij} }{2} J_{i,j}(A^k)  =
\sum_{i,j =1}^2   \sum_{k=1}^{K}  \frac{\Gamma_{ij} }{2}     \left [   f (m^k_i,m^k_j) +  m_i^k m_j^k R_{\mathbb{T}^2}( 0 )  \right ] ,
$$
with $m^k_i=|A^k_i|$.
\end{remark}

\appendix
\renewcommand{\theequation}{A.\arabic{equation}}
\renewcommand{\thelemma}{A.\arabic{lemma}}
\setcounter{equation}{0}

\vskip 1cm

\noindent{\Large \bf Appendix A}

\vspace{ 0.5cm}

A double bubble is bounded by three circular arcs of radii $r_1, r_2$ and $r_0$, where $r_0$ is the radius of the common boundary of the two lobes of a double bubble. 
Denote by $\theta_1, \theta_2$, and $\theta_0$ the angles associated with the three arcs.

{\em The asymmetric double bubble.}
Given an asymmetric double bubble, we assume without loss of generality that
\begin{eqnarray}
m_1 < m_2. \nonumber 
\end{eqnarray}
We recall that the equations relating masses, angles, and radii are 
\begin{eqnarray}
\label{e1A}
r_1^2 (\theta_1 - \cos \theta_1 \sin \theta_1) + r_0^2 (\theta_0 - \cos \theta_0 \sin \theta_0)  &=& m_1 \\
\label{e2A}
r_2^2 (\theta_2 - \cos \theta_2 \sin \theta_2) - r_0^2 (\theta_0 - \cos \theta_0 \sin \theta_0)  &=& m_2 \\
\label{e3A}
r_1 \sin \theta_1 &=& r_0 \sin \theta_0 \\
\label{e4A}
r_2 \sin \theta_2 &=& r_0 \sin \theta_0 \\
\label{e5A}
(r_1)^{-1} - (r_2)^{-1} &=& (r_0)^{-1}\\
\label{e6A}
\cos \theta_1 + \cos \theta_2 + \cos \theta_0 &=& 0
\end{eqnarray}
From \eqref{e3A}-\eqref{e5A}, we get
\begin{eqnarray} \label{e7A}
\sin \theta_1 - \sin \theta_2 - \sin \theta_0 = 0.
 \end{eqnarray}
Combine \eqref{e6A} with \eqref{e7A}, we get
\begin{eqnarray}
\cos(\theta_1+\theta_0) = - \frac{1}{2},  \text{ and } \cos(\theta_2-\theta_0) = - \frac{1}{2}. \nonumber
\end{eqnarray}
That is,
\begin{eqnarray} \label{e8A}
\theta_1 = \frac{2\pi}{3} - \theta_0,    \text{ and } \;  \theta_2 = \frac{2\pi}{3} + \theta_0.
\end{eqnarray}
Based on \eqref{e8A}, since $\theta_2 < \pi$, we get 
\begin{eqnarray}
0 < \theta_0 < \frac{\pi}{3}. \nonumber 
\end{eqnarray}
Based on \eqref{e1A}, \eqref{e2A} and \eqref{e3A}, we have
\begin{eqnarray}
\frac{   \frac{2\pi/3 - \theta_0}{ \sin^2 (2\pi/3 - \theta_0)}  - \frac{\cos( 2\pi/3 - \theta_0)}{\sin( 2\pi/3 - \theta_0)} + \frac{\theta_0}{\sin^2 \theta_0} - \frac{\cos \theta_0}{\sin \theta_0}  }{    \frac{2\pi/3 + \theta_0}{ \sin^2 (2\pi/3 + \theta_0)}  - \frac{\cos( 2\pi/3 + \theta_0)}{\sin( 2\pi/3 + \theta_0)} - \frac{\theta_0}{\sin^2 \theta_0} + \frac{\cos \theta_0}{\sin \theta_0}  } = \frac{m_1}{m_2}.
\end{eqnarray}
So $\theta_0$ depends on $m_1/m_2$ implicitly. That is, $\theta_0$ is a function of $m_1/m_2$.
Thus, $\theta_1$ and $\theta_2$ are also functions of $m_1/m_2$ due to \eqref{e8A}.

Based on \eqref{e1A}, \eqref{e3A}, and \eqref{e4A}, we get
\begin{eqnarray}
r_0^2 &=&  \frac{m_1}{\sin^2 \theta_0 \left[  \frac{\theta_1}{\sin^2 \theta_1 }  -   \frac{\cos \theta_1}{\sin \theta_1 } +  \frac{\theta_0}{\sin^2 \theta_0 }  -   \frac{\cos \theta_0}{\sin \theta_0 }  \right]  }, \nonumber  \\
r_1^2 &=&  \frac{ m_1}{ \sin^2 \theta_1 \left[  \frac{\theta_1}{\sin^2 \theta_1 }  -   \frac{\cos \theta_1}{\sin \theta_1 } +  \frac{\theta_0}{\sin^2 \theta_0 }  -   \frac{\cos \theta_0}{\sin \theta_0 }  \right]  }, \nonumber  \\
r_2^2 &=&  \frac{m_1 }{ \sin^2 \theta_2  \left[  \frac{\theta_1}{\sin^2 \theta_1 }  -   \frac{\cos \theta_1}{\sin \theta_1 } +  \frac{\theta_0}{\sin^2 \theta_0 }  -   \frac{\cos \theta_0}{\sin \theta_0 }  \right]  }. \nonumber
\end{eqnarray}

Thus the total perimeter of a double bubble is
\begin{eqnarray}
p(m_1, m_2) = 2 \sum_{i=0}^2 \theta_i r_i = \sqrt{m_1}  g \left ( \frac{m_1}{m_2} \right ),   \text{ when } m_1 < m_2, \nonumber 
\end{eqnarray}
where $g$ only depends on the ratio $m_1/m_2$.

{\em The symmetric double bubble.} For an symmetric double bubble, we assume that $ m_1 = m_2 $. 
The middle arc of the double bubble becomes a straight line. $\theta_1 =  \theta_2 = \frac{2\pi}{3}$, and $\theta_0 = 0$. $r_1 = r_2$, $r_0 = \infty$. And we have
\begin{eqnarray}
r_1^2 (\frac{2\pi}{3} - \cos \frac{2\pi}{3} \sin \frac{2\pi}{3} ) = m_1. \nonumber 
\end{eqnarray}
Therefore, 
\begin{eqnarray}
p(m_1, m_2) =  2 \sqrt{2} \sqrt{\frac{4}{3} \pi + \frac{\sqrt{3}}{2}} \sqrt{m_1}. \nonumber
\end{eqnarray}


\begin{thebibliography}{1}

\bibitem{afjm} E. Acerbi, N. Fusco, V. Julin and M. Morini. Nonlinear stability results for the modified Mullins-Sekerka and the surface diffusion flow. \textit{J. Differential Geom.}, 113(1):1-53, 2019.

\bibitem{afm} E. Acerbi, N. Fusco, and M. Morini. Minimality via second variation for a nonlocal isoperimetric problem. \textit{Comm. Math. Phys.}, 322(2): 515-557, 2013. 
\bibitem{nano} S. Alama, L. Bronsard, I. Topaloglu. Sharp Interface Limit of an Energy Modeling Nanoparticle-Polymer Blends. \textit{Interfaces and Free Boundaries}, 18(2): 263-290, 2016.
\bibitem{ABCT1} S. Alama, L. Bronsard, Rustum Choksi, I. Topaloglu. Droplet breakup in the liquid drop model with background potential. \textit{Commun. Contemp. Math.}, 21(3), 1850022, 2019.
\bibitem{ABCT2} S. Alama, L. Bronsard, Rustum Choksi, I. Topaloglu. Droplet phase in a nonlocal isoperimetric problem under confinement. \textit{Comm. Pure Appl. Anal.}, 19, 175-202, 2020.
\bibitem{otto} G. Alberti, R. Choksi and F. Otto. Uniform energy distribution for an isoperimetric problem with long-range interactions. \textit{J. Amer. Math. Soc.}, 22(2):569-605, 2009.
\bibitem{AFP} L. Ambrosio, N. Fusco and D. Pallara. Functions of Bounded Variation and Free Discontinuity Problems. \textit{The Clarendon Press, Oxford University Press}, New York, 2000.
\bibitem{block} F. S. Bates and G. H. Fredrickson. Block copolymers - designer soft materials. \textit{Phys. Today}, 52(2): 32-38, 1999.

\bibitem{BC} M. Bonacini and R. Cristoferi. Local and global minimality results for a nonlocal isoperimetric problem on $\mathbb{R}^N$. \textit{SIAM J. Math. Anal.}, 46(4): 2310-2349, 2014.
\bibitem{BK} M. Bonacini and H. Kn\"uepfer. Ground states of a ternary system including attractive and repulsive Coulomb-type interactions. \textit{Calc. Var. Partial Dif.}, 55:114, 2016.

\bibitem{cnt} R. Choksi, R. Neumayer and I. Topaloglu. Anisotropic liquid drop models, submitted.

\bibitem{bi1} R. Choksi and M.A. Peletier, Small Volume Fraction Limit of the Diblock Copolymer Problem: I. Sharp Interface Functional. \textit{SIAM J. Math. Anal.}, 42(3):1334-1370, 2010. 
\bibitem{onDerivation} R. Choksi and X. Ren. On the derivation of a density functional theory for microphase separation of diblock copolymers. \textit{J. Statist. Phys.}, 113:151-176, 2003.
\bibitem{blendCR} R. Choksi and X. Ren. Diblock copolymer-homopolymer blends: derivation of a density functional theory. \textit{Physica D}. 203(1-2):100-119, 2005.
\bibitem{cs} R. Choksi and P. Sternberg. On the first and second variations of a nonlocal isoperimetric problem. \textit{J. Reine Angew. Math.}, 611, 75-108, 2007.


\bibitem{cisp} M. Cicalese and E. Spadaro. Droplet minimizers of an isoperimetric problem with long-range interactions. \textit{Comm. Pure Appl. Math.}, 66(8):1298-1333, 2013.
\bibitem{cristoferi} R. Cristoferi. On periodic critical points and local minimizers of the Ohta-Kawasaki functional. \textit{Nonlinear Anal.}, 168, 81-109, 2018.
\bibitem{fall} M. Fall. Periodic patterns for a model invollving short-range and long-range interactions. \textit{Nonlinear Anal.}, 175:73-107, 2018. 

\bibitem{figalli} A. Figalli, N. Fusco, F. Maggi, V. Millot and M. Morini. Isoperimetry and stability properties of balls with respect to nonlocal energies. \textit{Comm. Math. Phys.}, 336(1):441-507, 2015.

\bibitem{FABHZ} J. Foisy, M. Alfaro, J. Brock, N. Hodges, and J. Zimba. The standard double soap bubble in $\mathbb{R}^2$ uniquely minimizes perimeter. \textit{Pacific J. Math.}, 159(1):47-59, 1993.

\bibitem{fl} R. Frank and E. Lieb. A compactness lemma and its application to the existence of minimizers for the liquid drop model.  \textit{SIAM J. Math. Anal.},47(6):4436-4450, 2015.


\bibitem{evolutionTer} K. Glasner. Evolution and competition of block copolymer nanoparticles. \textit{ SIAM J. Appl. Math.}, 79(1):28-54, 2019.
\bibitem{gc} K. Glasner and R. Choksi. Coarsening and self-organization in dilute diblock copolymer melts and mixtures. \textit{Phys. D}, 238:1241-1255, 2009.
\bibitem{GMSdensity} D. Goldman, C.B. Muratov and S. Serfaty. The $\Gamma$-limit of the two-dimensional Ohta-Kawasaki energy. I. droplet density. \textit{Arch. Rat. Mech. Anal.}, 210(2):581-613, 2013.
\bibitem{db1} J. Hass and R. Schlafly. Double bubbles minimize. \textit{Ann. Math.}, 151(2):459-515, 2000.
\bibitem{db2} M. Hutchings, R. Morgan, M. Ritor\'e, and A. Ros. Proof of the double bubble conjecture. \textit{Ann. Math.}, 155(2):459-489, 2002.
\bibitem{hnr} M. Helmers, B. Niethammer, and X. Ren. Evolution in off-critical diblock copolymer melts. \textit{Netw. Heterog. Media}, 3:615-632, 2008.

\bibitem{isenberg} C. Isenberg. The science of soap films and soap bubbles. Dover Publications, 1992. 


\bibitem{Julin} V. Julin, Isoperimetric problem with a Coulomb repulsive term. \textit{Indiana Univ. Math. J.}, 63(1): 77-89, 2014.


\bibitem{Julin2} V. Julin and G. Pisante. Minimality via second variation for microphase separation of diblock copolymer melts. \textit{J. Reine Angew. Math.}, 729, 81, 2017.
\bibitem{Julin3} V. Julin. Remark on a nonlocal isoperimetric problem. \textit{Nonlinear Anal.}, 154:174-188, 2017.


\bibitem{luotto} J. Lu and F. Otto. An isoperimetric problem with Coulomb repulsion and attraction to a background nucleus, preprint, arXiv:1508.07172, 2015.

\bibitem{knupfer1} H. Kn\"upfer and C.B. Muratov. On an isoperimetric problem with a competing non-local term. I. The planar case. \textit{ Comm. Pure Appl. Math.}, 66, 1129?1162, 2013.
\bibitem{knupfer2} H. Kn\"upfer and C.B. Muratov. On an isoperimetric problem with a competing non-local term. II. The general case. \textit{ Comm. Pure Appl. Math.}, 67, 1974-1994, 2014.
\bibitem{lions} P.L. Lions, The Concentration-Compactness Principle in the Calculus of Variations. The Limit Case, Part I. \textit{ Rev. Mat. Iberoamercana}, 1(1):145-201, 1984.
\bibitem{ms} M. Morini and P. Sternberg. Cascade of minimizers for a nonlocal isoperimetric problem in thin domains. \textit{SIAM J. Math. Anal.}, 46(3):2033-2051, 2014.
\bibitem{microphase} H. Nakazawa and T. Ohta. Microphase separation of ABC-type triblock copolymers. \textit{Macromolecules}, 26(20):5503-5511, 1993.
\bibitem{nishiura} Y. Nishiura and I. Ohnishi. Some mathematical aspects of the micro-phase separation in diblock copolymers. \textit{Phys. D}, 84, 31-39, 1995. 
\bibitem{maggi} F. Maggi. Sets of finite perimeter and geometric variational problems: An introduction to geometric measure theory. New York: Cambridge University Press. 2012.


\bibitem{Min} N. Min, T. Choi, S. Kim. Bicolored Janus microparticles created by phase separation in emulsion drops. \textit{Macromol. Chem. Phys.}, 218:1600265, 2017.
\bibitem{Mogi} Y. Mogi, {\it{et al.}}, Superlattice structures in morphologies of the ABC Triblock copolymers. \textit{Macromolecules}, 27(23): 6755-6760, 1994.


\bibitem{muratov} C.B. Muratov. Droplet phases in non-local Ginzburg-Landau models with Coulomb repulsion in two dimensions. \textit{Comm. Math. Phys.}, 299(1):45-87,2010.
\bibitem{oshita} Y. Oshita. Singular limit problem for some elliptic systems. \textit{SIAM J. Math. Anal.}, 38(6):1886-1911, 2007.
\bibitem{equilibrium} T. Ohta and K. Kawasaki. Equilibrium morphology of block copolymer melts. \textit{Macromolecules}, 19(10):2621-2632,1986.
\bibitem{stationary}  X. Ren and C. Wang. A stationary core-shell assembly in a ternary inhibitory system. \textit{Discrete Contin. Dyn. Syst.}, 37(2):983-1012, 2017. 
\bibitem{disc} X. Ren and C. Wang. Stationary disk assemblies in a ternary system with long range interaction. \textit{Commun. Contemp. Math.}, 1850046, 2018.
\bibitem{miniRW} X. Ren and J. Wei. On the multiplicity of solutions of two nonlocal variational problems. \textit{SIAM J. Math. Anal.}, 31(4):909-924, 2000.
\bibitem{lameRW} X. Ren and J.Wei. Triblock copolymer theory: Ordered ABC lamellar phase. \textit{J. Nonlinear Sci.}, 13(2):175-208, 2003.
\bibitem{rwtri}  X. Ren and J. Wei. Triblock copolymer theory: free energy, disordered phase and weak segregation. \textit{Physica D}, 178(1-2):103-117, 2003.
\bibitem{many}  X. Ren and J.Wei.  Many dropet pattern in the cylindrical phase of diblock copolymer morphology. {\em  Rev. Math. Phys.}, 19(8):879-921, 2007.
\bibitem{spherical} X. Ren and J. Wei. Spherical solutions to a nonlocal free boundary problem from diblock copolymer morphology. \textit{SIAM J. Math. Anal.}, 39(5):1497-1535, 2008.
\bibitem{oval} X. Ren and J. Wei. Oval shaped droplet solutions in the saturation process of some pattern formation problems. \textit{SIAM J. Appl. Math.}, 70(4):1120-1138, 2009.
\bibitem{doubleAs}  X.Ren and J.Wei. A double bubble assembly as a new phase of a ternary inhibitory system. {\em Arch. Rat. Mech. Anal.}, 215(3):967-1034, 2015.
\bibitem{double} X. Ren and J. Wei. A double bubble in a ternary system with inhibitory long range interaction. \textit{Arch. Rat. Mech. Anal.}, 46(4):2798-2852, 2014.
\bibitem{Schwarz} H. A. Schwarz. Beweis des Satze, dass die Kugel kleinere Oberfl\"ache besitzt, als jeder andere K\"orper gleichen Volumens. \textit{Nach. K\"oniglichen Ges. Wiss. G\"ottingen}, pages 1-13, 1884.
\bibitem{st}  P. Sternberg and I. Topaloglu. A note on the global minimizers of the nonlocal isoperimetric problem in two dimensions. \textit{Interfaces Free Bound.}, 13(1):155-169, 2011.
\bibitem{ihsan} I. Topaloglu. On a nonlocal isoperimetric problem on the two-sphere. \textit{Comm. Pure Appl. Anal.}, 12(1):597-620, 2013.

\bibitem{wrz} C. Wang, X. Ren and Y. Zhao. Bubble assemblies in ternary systems with long range interaction. \textit{Comm. Math. Sci.}, accepted, 2019.


%
%
%
%
%
%
%
%
%
%



%
%

%
%

%





\end{thebibliography}
\end{document}